\theoremstyle{definition}
\newtheorem{Def}{Definition}[section]
\newtheorem{df}[Def]{Definition}
\newtheorem{thm}[Def]{Theorem}
\newtheorem{cor}[Def]{Corollary}
\newtheorem{lem}[Def]{Lemma}
\newtheorem{rem}[Def]{Remark}
\newtheorem{ex}[Def]{Example}
\renewcommand{\thefootnote}{\fnsymbol{footnote}}
\begin{document}

\title[Abrahamse's Theorem and Subnormal Toeplitz Completion]
{\bf Abrahamse's Theorem for matrix-valued symbols\\
and subnormal Toeplitz completions}
\author{Ra{\'u}l\ E.\ Curto}
\address{Department of Mathematics, University of Iowa, Iowa City, IA 52242, U.S.A.}
\email{raul-curto@uiowa.edu}

\author{In Sung Hwang}
\address{Department of Mathematics, Sungkyunkwan University, Suwon 440-746, Korea}
\email{ihwang@skku.edu}

\author{Woo Young Lee}
\address{Department of Mathematics, Seoul National University, Seoul 151-742, Korea}
\email{wylee@snu.ac.kr}

\date{}
\maketitle

\renewcommand{\thefootnote}{}
\footnote{\\
\textsl{MSC(2010)}: Primary 47B20, 47B35, 46J15, 15A83; Secondary 30H10, 47A20\\
\smallskip
\textit{Keywords}: Block Toeplitz operators; subnormal; Abrahamse's Theorem;
bounded type functions; subnormal completion problems\\
The work of the first named author was partially supported by NSF
Grant DMS-0801168. \ The work of second named author was supported
by Basic Science Research Program through NRF funded by the Ministry
of Education, Science and Technology (2011-0022577). \ The work of
the third author was supported by the National Research Foundation
of Korea(NRF) grant funded by the Korea government(MEST)
(No.2012-0000939).}

\maketitle

\bigskip

\begin{abstract}
This paper deals with subnormality of Toeplitz operators with
matrix-valued symbols and, in particular, with
an appropriate reformulation of Halmos's Problem 5: Which subnormal Toeplitz operators with
matrix-valued symbols are either normal or analytic\,? \
In 1976, M. Abrahamse showed that if $\varphi\in L^\infty$ is such that $\varphi$
or $\overline\varphi$ is of bounded type and if $T_\varphi$ is subnormal, then
$T_\varphi$ is either normal or analytic. \
In this paper we establish a matrix-valued version of Abrahamse's
Theorem and then apply this result to solve the following Toeplitz
completion problem: Find the unspecified Toeplitz entries of the
partial block Toeplitz matrix
$$
A:=\begin{bmatrix} T_{\overline b_\alpha} & ?\\ ?& T_{\overline
b_\beta}\end{bmatrix}\quad\hbox{($\alpha,\beta\in\mathbb D$)}
$$
so that $A$ becomes subnormal, where $b_\lambda$ is a Blaschke
factor of the form $b_\lambda(z):=\frac{z-\lambda}{1-\overline
\lambda z}$ ($\lambda\in \mathbb D$).
\end{abstract}

%
%
\pagestyle{plain}

\bigskip

\section{Introduction}

\noindent This paper focuses on subnormality for
Toeplitz operators with matrix-valued symbols and more precisely,
the case of Toeplitz
operators with matrix-valued bounded type symbols. \
In this paper we give an
appropriate generalization of Abrahamse's Theorem to the case of
matrix-valued symbols and apply this generalization to solve a
subnormal Toeplitz completion problem. \

\smallskip

To describe our results in more detail, we first need to review a
few essential facts about (block) Toeplitz operators, and for that
we will use \cite{BS}, \cite{Do1}, \cite{Do2}, \cite{GGK}, \cite{Ni} and \cite{Pe}. \
Let
$\mathcal{H}$ be a complex Hilbert space and let $\mathcal{B(H)}$ be
the algebra of bounded linear operators acting on $\mathcal{H}$. \
An operator $T\in\mathcal{B(H)}$ is said to be {\it hyponormal} if
its self-commutator $[T^*,T]:= T^*T-TT^*$ is positive
(semi-definite), and {\it subnormal} if there exists a normal
operator $N$ on some Hilbert space $\mathcal{K}\supseteq
\mathcal{H}$ such that $\mathcal H$ is invariant under $N$ and
$N\vert_{\mathcal{H}}=T$. \
Let $\mathbb{T} \equiv
\partial\,\mathbb{D}$ be the unit circle in the complex plane. \
Let
$L^2\equiv L^2({\mathbb T})$ be the set of all square-integrable
measurable functions on $\mathbb{T}$ and let $H^2\equiv H^2({\mathbb
T})$ be the corresponding Hardy space. \
Let $H^\infty\equiv
H^\infty(\mathbb T):=L^\infty (\mathbb T)\cap H^2 (\mathbb T)$, that
is, $H^\infty$ is the set of bounded analytic functions on $\mathbb D$. \
Given $\varphi\in L^\infty$, the Toeplitz operator $T_\varphi$ and
the Hankel operator $H_\varphi$ are defined by
$$
T_\varphi g:=P(\varphi g) \quad\hbox{and}\quad H_\varphi g:=JP^\perp(\varphi g)
\qquad (g\in H^2),
$$
where $P$ and $P^\perp$ denote the orthogonal projections that map
from $L^2$ onto $H^2$ and $(H^2)^\perp$, respectively, and where $J$
denotes the unitary operator on $L^2$ defined by $J(f)(z)=\overline
z f(\overline z)$. \
In 1988, the hyponormality of $T_\varphi$ was completely characterized
in terms of its symbol via Cowen's Theorem \cite{Co3}.

\bigskip

\noindent {\bf Cowen's Theorem.} (\cite{Co3}, \cite{NT}) {\it For
each $\varphi\in L^\infty$, let
$$
\mathcal{E}(\varphi)\equiv \{k\in H^\infty:\ ||k||_\infty\le 1\
\hbox{and}\ \varphi-k\overline\varphi\in H^\infty\}.
$$
Then $T_\varphi$ is hyponormal if and only if $\mathcal{E}(\varphi)$ is
nonempty. }
\bigskip

This elegant and useful theorem has been used in
\cite{CuL1}, \cite{CuL2}, \cite{FL}, \cite{Gu1}, \cite{Gu2},
\cite{GS}, \cite{HKL1}, \cite{HKL2}, \cite{HL1}, \cite{HL2},
\cite{HL3}, \cite{Le}, \cite{NT} and \cite{Zhu}, which have been
devoted to the study of hyponormality for Toeplitz operators on $H^2$. \
When one studies the hyponormality (also, normality and
subnormality) of the Toeplitz operator $T_\varphi$ one may, without
loss of generality, assume that $\varphi(0)=0$; this is because
hyponormality is invariant under translation by scalars. \

\medskip

We now recall that a function $\varphi\in L^\infty$ is said to be of {\it bounded type} (or
in the Nevanlinna class) if there are analytic functions
$\psi_1,\psi_2\in H^\infty (\mathbb D)$ such that
$$
\varphi(z)=\frac{\psi_1(z)}{\psi_2(z)}\quad\hbox{for almost all}\ z\in
\mathbb{T}.
$$
If $\varphi\in L^\infty$, we write
$$
\varphi_+\equiv P \varphi\in H^2\quad\hbox{and}\quad \varphi_-\equiv
\overline{P^\perp \varphi}\in zH^2.
$$
Let $BMO$ denote the set of functions of bounded mean oscillation in
$L^1$. Then $L^\infty\subseteq BMO \subseteq L^2$. \
It is well-known
that if $f\in L^2$, then $H_f$ is bounded on $H^2$ whenever $P^\perp
f\in BMO$ (cf. \cite{Pe}). \
If $\varphi\in L^\infty$, then
$\overline{\varphi_-}, \overline{\varphi_+}\in BMO$, so that
$H_{\overline{\varphi_-}}$ and $H_{\overline{\varphi_+}}$ are well understood. \
It is well known \cite[Lemma 3]{Ab} that if $\varphi\in L^\infty$ then
\begin{equation}\label{1.1}
\hbox{$\varphi$ is of bounded type}\ \Longleftrightarrow\ \hbox{ker}\,
H_\varphi\ne \{0\}\,.
\end{equation}
Assume now that both $\varphi$ and $\overline\varphi$ are of bounded type. \
Since $T_{\overline z}H_\psi=H_\psi T_z$ for all $\psi \in
L^{\infty}$, it follows from Beurling's Theorem that $\text{ker}\,
H_{\overline{\varphi_-}}=\theta_0 H^2$ and $\text{ker}\,
H_{\overline{\varphi_+}}=\theta_+ H^2$ for some inner functions $\theta_0, \theta_+$. \
We thus have $b:={\overline{\varphi_-}}\theta_0
\in H^2$, and hence we can write
\begin{equation}\label{1.1-1}
\varphi_-=\theta_0\overline{b}, \text{~and similarly~}
\varphi_+=\theta_+\overline{a} \text{~for some~} a \in H^2.
\end{equation}
In the factorization (\ref{1.1-1}), we will always assume that
$\theta_0$ and $b$ are coprime and $\theta_+$ and $a$ are coprime. \
In particular, if $T_\varphi$ is hyponormal and $\varphi\notin H^\infty$, and since
$$
[T_\varphi^*, T_\varphi]=H_{\overline\varphi}^* H_{\overline\varphi}-H_\varphi^*
H_\varphi= H_{\overline{\varphi_+}}^*
H_{\overline{\varphi_+}}-H_{\overline{\varphi_-}}^* H_{\overline{\varphi_-}},
$$
it follows that $||H_{\overline{\varphi_+}} f||\ge
||H_{\overline{\varphi_-}} f||$ for all $f\in H^2$, and hence
\begin{equation*}
\theta_+ H^2= \text{ker}\, H_{\overline{\varphi_+}}\subseteq
\text{ker}\, H_{\overline{\varphi_-}}=\theta_0 H^2,
\end{equation*}
which implies that $\theta_0$ divides $\theta_+$, i.e.,
$\theta_+=\theta_0\theta_1$ for some inner function $\theta_1$. \
We write, for an inner function $\theta$,
$$
\mathcal H_{\theta}:=H^2\ominus \theta\,H^2.
$$
Note that if $f=\theta \overline a \in L^2$, then $f\in H^2$ if and
only if $a\in \mathcal H_{z\theta}$; in particular, if $f(0)=0$ then
$a\in \mathcal H_\theta$. \
Thus, if $\varphi=\overline{\varphi_-}+\varphi_+\in
L^\infty$ is such that $\varphi$ and $\overline\varphi$ are of bounded
type and $T_\varphi$ is hyponormal, then we can write
\begin{equation}\label{1.3}
\varphi_+=\theta_0\theta_1\overline a\quad\text{and}\quad
\varphi_-=\theta_0 \overline b, \qquad\text{where $a\in
\mathcal{H}_{z\theta_0\theta_1}$ and $b\in\mathcal{H}_{\theta_0}$:}
\end{equation}
in this case, $\theta_0\theta_1\overline a$ and $\theta_0 \overline
b$ are called {\it coprime factorizations} of $\varphi_+$ and $\varphi_-$,
respectively. \
By Kronecker's Lemma \cite[p. 183]{Ni}, if $f\in
H^\infty$ then $\overline f$ is a rational function if and only if
$\hbox{rank}\, H_{\overline f}<\infty$, which implies that
\begin{equation}\label{1.4}
\hbox{$\overline{f}$ is rational} \ \Longleftrightarrow\
f=\theta\overline b\ \ \hbox{with a finite Blaschke product
$\theta$}.
\end{equation}

\bigskip

We now introduce the notion of block Toeplitz operators. \
For a
Hilbert space $\mathcal X$, let $L^2_{\mathcal X}\equiv
L^2_{\mathcal X}(\mathbb T)$ be the Hilbert space of $\mathcal
X$-valued norm square-integrable measurable functions on
$\mathbb{T}$ and let $H^2_{\mathcal X}\equiv H^2_{\mathcal
X}(\mathbb T)$ be the corresponding Hardy space. \
We observe that
$L^2_{\mathbb{C}^n}= L^2\otimes \mathbb{C}^n$ and
$H^2_{\mathbb{C}^n}= H^2\otimes \mathbb{C}^n$. If $\Phi$ is a
matrix-valued function in $L^\infty_{M_n}\equiv
L^\infty_{M_n}(\mathbb T)$ ($=L^\infty\otimes M_n$) then $T_\Phi:
H^2_{\mathbb{C}^n}\to H^2_{\mathbb{C}^n}$ denotes the block Toeplitz
operator with symbol $\Phi$ defined by
$$
T_\Phi F:=P_n(\Phi F)\quad \hbox{for}\ F\in H^2_{\mathbb{C}^n},
$$
where $P_n$ is the orthogonal projection of $L^2_{\mathbb{C}^n}$
onto $H^2_{\mathbb{C}^n}$. \
A block Hankel operator with symbol
$\Phi\in L^\infty_{M_n}$ is the operator $H_\Phi:
H^2_{\mathbb{C}^n}\to H^2_{\mathbb{C}^n}$ defined by
$$
H_\Phi F := J_n P_n^\perp (\Phi F)\quad \hbox{for}\ F\in
H^2_{\mathbb{C}^n},
$$
where $P_n^\perp$ is the orthogonal projection of $L^2_{\mathbb{C}^n}$
onto $(H^2_{\mathbb{C}^n})^\perp$, $J_n$ denotes the unitary operator on
$L^2_{\mathbb{C}^n}$ given
by $J_n(F)(z):=\overline{z} I_n F(\overline{z})$ for $F \in
L^2_{\mathbb{C}^n}$, and where $I_n$ is the $n\times n$ identity matrix. \
For $\Phi\in L^\infty_{M_n}$, we write
$$
\widetilde\Phi (z):=\Phi^*(\overline z).
$$
For $\Phi\in L^\infty_{M_n}$, we also write
$$
\Phi_+:=P_n \Phi \in H^2_{M_n} \quad\hbox{and}\quad
\Phi_-:=\bigl(P_n^\perp \Phi\bigr)^* \in H^2_{M_n}.
$$
Thus we can write $\Phi=\Phi_-^*+\Phi_+\,$. \
However, it will be often convenient to permit the constant
term for $\Phi_-$. \
Hence, if there is no confusion we may
assume that $\Phi_-$ shares the constant term with $\Phi_+$: in this case,
$\Phi(0)=\Phi_+(0)+\Phi_-(0)^*$. \

A matrix-valued function
$\Theta\in H^\infty_{M_{n\times m}}$ ($=H^\infty\otimes M_{n\times
m}$) is called {\it inner} if $\Theta(z)^*\Theta(z)=I_m$ for almost
all $z\in\mathbb{T}$. \
The following basic relations can be easily derived:
\begin{align}
&T_\Phi^*=T_{\Phi^*},\ \  H_\Phi^*= H_{\widetilde
\Phi}\quad (\Phi\in L^\infty_{M_n});\notag\\
&T_{\Phi\Psi}-T_\Phi T_\Psi = H_{\Phi^*}^*H_\Psi \quad (\Phi,\Psi\in L^\infty_{M_n});\label{1.5}\\
&H_\Phi T_\Psi = H_{\Phi\Psi},\ \
H_{\Psi\Phi}=T_{\widetilde{\Psi}}^*H_\Phi\quad (\Phi\in
L^\infty_{M_n}, \Psi\in H^\infty_{M_n});\label{1.5-1}
\end{align}

\noindent
For a matrix-valued function $\Phi=[\phi_{ij}]\in
L^\infty_{M_n}$, we say that $\Phi$ is of {\it bounded type} if each
entry $\phi_{ij}$ is of bounded type and that $\Phi$ is {\it
rational} if each entry $\phi_{ij}$ is a rational function. \

\medskip

In 2006, Gu, Hendricks and Rutherford \cite{GHR} characterized the
hyponormality of block Toeplitz operators in terms of their symbols. \
In particular they showed that if $T_\Phi$ is a hyponormal
block Toeplitz operator on $H^2_{\mathbb{C}^n}$, then $\Phi$ is
normal, i.e., $\Phi^*\Phi=\Phi\Phi^*$. \
Their characterization for
hyponormality of block Toeplitz operators resembles Cowen's
Theorem except for an additional condition -- the normality
condition of the symbol.

\medskip

\noindent
\begin{lem}\label{lem1.2}({\bf Hyponormality of Block Toeplitz Operators})
(Gu-Hendricks-Rutherford \cite{GHR}) For each $\Phi\in
L^\infty_{M_n}$, let
$$
\mathcal{E}(\Phi):=\Bigl\{K\in H^\infty_{M_n}:\ ||K||_\infty \le 1\
\ \hbox{and}\ \ \Phi-K \Phi^*\in H^\infty_{M_n}\Bigr\}.
$$
Then $T_\Phi$ is hyponormal if and only if $\Phi$ is normal and
$\mathcal{E}(\Phi)$ is nonempty.
\end{lem}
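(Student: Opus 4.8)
The plan is to follow the template of the scalar Cowen Theorem, with the extra work concentrated on the non-commutativity of the symbol. First I would record the block self-commutator identity coming from the relations in (\ref{1.5}): since
\begin{equation*}
T_\Phi^*T_\Phi=T_{\Phi^*}T_\Phi=T_{\Phi^*\Phi}-H_\Phi^*H_\Phi
\quad\text{and}\quad
T_\Phi T_\Phi^*=T_\Phi T_{\Phi^*}=T_{\Phi\Phi^*}-H_{\Phi^*}^*H_{\Phi^*},
\end{equation*}
subtraction yields
\begin{equation*}
[T_\Phi^*,T_\Phi]=T_{\Phi^*\Phi-\Phi\Phi^*}+H_{\Phi^*}^*H_{\Phi^*}-H_\Phi^*H_\Phi .
\end{equation*}
In the scalar case the Toeplitz term disappears because $|\varphi|^2$ commutes with itself; here it is exactly the obstruction governed by the normality condition, so isolating and killing it is the first order of business.

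Second, I would show that hyponormality forces $\Phi$ to be normal, via a high-frequency limiting argument. Since $\Phi\in L^2_{M_n}$, for any $\xi\in\mathbb{C}^n$ one has $\|H_\Phi z^m\xi\|\to0$ and $\|H_{\Phi^*}z^m\xi\|\to0$ as $m\to\infty$, because these are tails of $\sum_k\|\widehat\Phi(k)\|^2$. Inserting $f=z^m\xi$ into $\langle[T_\Phi^*,T_\Phi]f,f\rangle\ge0$ and letting $m\to\infty$ annihilates the Hankel terms and leaves $\langle\widehat{\Phi^*\Phi}(0)\xi,\xi\rangle\ge\langle\widehat{\Phi\Phi^*}(0)\xi,\xi\rangle$; since $\operatorname{tr}(\Phi^*\Phi)=\operatorname{tr}(\Phi\Phi^*)$ pointwise, the two averaged matrices have equal trace, and a positive semidefinite difference of trace zero must vanish, so $\widehat{\Phi^*\Phi}(0)=\widehat{\Phi\Phi^*}(0)$. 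Applying the Cauchy--Schwarz inequality for the positive semidefinite form $[T_\Phi^*,T_\Phi]$ to the pair $z^m\xi,\ z^{m+p}\eta$ and letting $m\to\infty$ (the diagonal limits now being zero) forces each coefficient $\langle\widehat{G}(p)\xi,\eta\rangle$ to vanish, where $G:=\Phi^*\Phi-\Phi\Phi^*$; as $G$ is self-adjoint-valued this gives $G\equiv0$, i.e.\ $\Phi$ is normal.

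Third, once $\Phi$ is normal the Toeplitz term drops out and hyponormality is equivalent to the operator inequality $H_{\Phi^*}^*H_{\Phi^*}\ge H_\Phi^*H_\Phi$, that is, $\|H_\Phi F\|\le\|H_{\Phi^*}F\|$ for all $F\in H^2_{\mathbb{C}^n}$. By Douglas's range--inclusion (factorization) theorem this is equivalent to the existence of a contraction $C$ with $H_\Phi=C\,H_{\Phi^*}$. On the other hand, by (\ref{1.5-1}) a symbol $K\in H^\infty_{M_n}$ satisfies $\Phi-K\Phi^*\in H^\infty_{M_n}$ precisely when $H_\Phi=H_{K\Phi^*}=T_{\widetilde K}^*H_{\Phi^*}$, and $\|K\|_\infty\le1$ makes $T_{\widetilde K}^*$ a contraction; thus any $K\in\mathcal{E}(\Phi)$ produces such a $C$, giving the easy implication.

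The hard part will be the converse of the third step: upgrading the abstract Douglas contraction $C$ to one of the special shape $T_{\widetilde K}^*$ with $K$ an analytic contraction. For this I would exploit the intertwining relation $H_\Phi S_n=S_n^*H_\Phi$ (the block form of $T_{\overline z}H_\psi=H_\psi T_z$, with $S_n:=T_{zI_n}$): it shows that the ranges of $H_\Phi$ and $H_{\Phi^*}$ are $S_n^*$-invariant and that $C$ intertwines the restrictions of $S_n^*$ to $\overline{\operatorname{ran}}\,H_{\Phi^*}$ and $\overline{\operatorname{ran}}\,H_\Phi$. The (matrix-valued) commutant lifting theorem then realizes such an intertwining contraction as the adjoint of an analytic Toeplitz operator with contractive $H^\infty_{M_n}$ symbol, delivering the desired $K$ and hence $\mathcal{E}(\Phi)\neq\varnothing$. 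The points that need care are the bookkeeping for the constant terms of $\Phi_\pm$ and verifying that the lifted symbol genuinely lands in $H^\infty_{M_n}$ with $\|K\|_\infty\le1$.
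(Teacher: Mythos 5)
Your argument is correct in outline and follows essentially the standard proof of this result: the paper itself gives no proof of Lemma \ref{lem1.2} (it is quoted from \cite{GHR}), and the route you describe --- the self-commutator identity $[T_\Phi^*,T_\Phi]=T_{\Phi^*\Phi-\Phi\Phi^*}+H_{\Phi^*}^*H_{\Phi^*}-H_\Phi^*H_\Phi$, normality of $\Phi$ via the high-frequency limit $\|H_\Phi z^m\xi\|\to 0$ together with the trace-zero/positive-semidefinite observation and Cauchy--Schwarz for the commutator form, and then Douglas factorization upgraded by commutant lifting to a contractive analytic symbol $K$ --- is precisely the Cowen/Gu--Hendricks--Rutherford argument. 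The only part left at sketch level is the commutant-lifting step, but you correctly identify the intertwining relation $H_\Phi S_n=S_n^*H_\Phi$ that makes it applicable.
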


\bigskip

In \cite{GHR}, the normality of block Toeplitz operator $T_\Phi$ was
also characterized in terms of the symbol $\Phi$, under a
``determinant" assumption on the symbol $\Phi$. \

\medskip

\noindent
\begin{lem}\label{lem1.3}({\bf Normality of Block Toeplitz Operators})
(Gu-Hendricks-Rutherford \cite{GHR}) Let $\Phi\equiv \Phi_+
+\Phi_-^*$ be normal. \
If
$\hbox{det}\,\Phi_+$ is not identically zero then
\begin{equation}\label{1.3-1}
T_\Phi\ \hbox{is normal}\Longleftrightarrow \Phi_+-\Phi_+(0)=\bigl(\Phi_- - \Phi_-(0)\bigr)\,U\ \
\hbox{for some constant unitary matrix}\ U.
\end{equation}
\end{lem}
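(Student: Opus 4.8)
The plan is to translate normality of $T_\Phi$ into an identity between Hankel operators, dispose of the easy implication by a direct computation, and then extract the constant unitary $U$ from a rigidity argument in which the hypothesis $\det\Phi_+\not\equiv 0$ is decisive.

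First I would record the self-commutator. Applying \eqref{1.5} twice,
\[
[T_\Phi^*,T_\Phi]=T_{\Phi^*}T_\Phi-T_\Phi T_{\Phi^*}=T_{\Phi^*\Phi-\Phi\Phi^*}+H_{\Phi^*}^*H_{\Phi^*}-H_\Phi^*H_\Phi .
\]
Since $\Phi$ is normal the Toeplitz term vanishes, and since $H_\Phi=H_{\Phi_-^*}$ and $H_{\Phi^*}=H_{\Phi_+^*}$ (the analytic summands of $\Phi$ and of $\Phi^*$ contribute nothing to the Hankel operators), I get $[T_\Phi^*,T_\Phi]=H_{\Phi_+^*}^*H_{\Phi_+^*}-H_{\Phi_-^*}^*H_{\Phi_-^*}$. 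Thus $T_\Phi$ is normal iff $\|H_{\Phi_+^*}F\|=\|H_{\Phi_-^*}F\|$ for all $F\in H^2_{\mathbb C^n}$. Because this depends only on $\Phi_+-\Phi_+(0)$ and $\Phi_--\Phi_-(0)$, I may and will assume $\Phi_+(0)=\Phi_-(0)=0$, so that the asserted condition reads simply $\Phi_+=\Phi_-U$.

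For the implication $(\Leftarrow)$, suppose $\Phi_+=\Phi_-U$ with $U$ a constant unitary. Then $\Phi_+^*=U^*\Phi_-^*$, and \eqref{1.5-1} gives $H_{\Phi_+^*}=H_{U^*\Phi_-^*}=T_{U^*}H_{\Phi_-^*}$, where $T_{U^*}$ is (unitary) left multiplication by the constant $U^*$. Hence $H_{\Phi_+^*}^*H_{\Phi_+^*}=H_{\Phi_-^*}^*H_{\Phi_-^*}$ and $T_\Phi$ is normal; note this direction needs neither $\det\Phi_+\not\equiv 0$ nor any rigidity.

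For $(\Rightarrow)$, normality makes both $T_\Phi$ and $T_{\Phi^*}=T_\Phi^*$ hyponormal, so Lemma~\ref{lem1.2} supplies $K\in\mathcal E(\Phi)$ and $L\in\mathcal E(\Phi^*)$; rewriting $\Phi-K\Phi^*\in H^\infty_{M_n}$ and $\Phi^*-L\Phi\in H^\infty_{M_n}$ through \eqref{1.5-1} yields $H_{\Phi_-^*}=T_{\widetilde K}^*H_{\Phi_+^*}$ and $H_{\Phi_+^*}=T_{\widetilde L}^*H_{\Phi_-^*}$. The norm equality from the first step forces $T_{\widetilde K}^*$ to be isometric on $\overline{\mathrm{ran}}\,H_{\Phi_+^*}$ and $T_{\widetilde L}^*$ to be isometric on $\overline{\mathrm{ran}}\,H_{\Phi_-^*}$, the two maps being mutually inverse there. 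I would argue that this rigidity upgrades $K$ and $L$ to \emph{inner} matrix functions obeying the exact symbol identities $\Phi_-^*=K\Phi_+^*$ and $\Phi_+^*=L\Phi_-^*$. Taking determinants in the second identity and using $\det\Phi_+\not\equiv 0$ gives $\det\Phi_-\not\equiv 0$, so $\Phi_-^*$ is invertible almost everywhere; substituting one identity into the other yields $(I-KL)\Phi_-^*=0$, whence $KL=I$ a.e. Then $\det K\,\det L=1$ forces the scalar inner functions $\det K,\det L$ to be unimodular constants, and an inner matrix function with constant determinant is itself constant (its inverse would lie in both $H^\infty_{M_n}$ and $\overline{H^\infty_{M_n}}$). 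Thus $K$ is a constant unitary, and $\Phi_-^*=K\Phi_+^*$ rearranges to $\Phi_+=\Phi_-U$ with $U:=K$.

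The main obstacle is the rigidity step in $(\Rightarrow)$: promoting the contractions $T_{\widetilde K}^*,T_{\widetilde L}^*$, which a priori are isometric only on the (generally proper) ranges of the Hankel operators and give $H^\infty$-equalities rather than exact symbol identities, to genuine inner symbols satisfying $\Phi_-^*=K\Phi_+^*$ and $\Phi_+^*=L\Phi_-^*$, and then collapsing them to constants. This is exactly where $\det\Phi_+\not\equiv 0$ is indispensable: it guarantees that $\Phi_-^*$ is invertible almost everywhere, which is what permits cancelling $(I-KL)\Phi_-^*=0$ down to $KL=I$; absent this hypothesis $K$ could absorb a nonconstant inner factor and the conclusion would fail.
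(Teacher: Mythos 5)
The paper states this lemma without proof, quoting it from Gu--Hendricks--Rutherford \cite{GHR}, so there is no internal argument to compare against; I can only assess your proposal on its own terms. Your reduction of normality of $T_\Phi$ to the identity $H_{\Phi_+^*}^*H_{\Phi_+^*}=H_{\Phi_-^*}^*H_{\Phi_-^*}$ is correct (using normality of the symbol to kill the Toeplitz term), the normalization $\Phi_+(0)=\Phi_-(0)=0$ is harmless, and the implication $(\Leftarrow)$ is complete. The closing algebra of $(\Rightarrow)$ --- passing from exact identities $\Phi_-^*=K\Phi_+^*$ and $\Phi_+^*=L\Phi_-^*$ with $K,L$ inner to $KL=I$ a.e.\ and hence to a constant unitary --- is also sound and correctly locates where $\det\Phi_+\not\equiv 0$ enters.

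The gap is exactly the step you flag as ``the main obstacle,'' and it is not a technicality: nothing in your argument produces an inner $K$ satisfying the exact symbol identity. Two distinct things are missing. First, Lemma~\ref{lem1.2} hands you an arbitrary contraction $K\in\mathcal E(\Phi)$, and the isometry of $T_{\widetilde K}^*$ is established only on $\hbox{cl ran}\,H_{\Phi_+^*}$; this subspace can be very small (finite-dimensional when $\Phi_+$ is a trigonometric polynomial), and $\det\Phi_+\not\equiv 0$ does nothing to enlarge it, so innerness of $K$ does not follow. The paper's own Theorem~\ref{thm3.7} shows how much extra structure is needed to reach that conclusion even under bounded-type hypotheses: STEPS 3--5 there deploy the coprimality machinery of Lemmas~\ref{lem5.2}--\ref{lem5.6} precisely to force $\mathcal H_{\widetilde\Omega}\subseteq\ker(I-T_{\widetilde K}T_{\widetilde K}^*)$ before squeezing out $K^*K=I$, whereas Lemma~\ref{lem1.3} assumes none of that. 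Second, $\Phi-K\Phi^*\in H^\infty_{M_n}$ yields only $\Phi_-^*-K\Phi_+^*\in H^2_{M_n}$, i.e., the identity holds modulo the analytic remainder $-P_n(K\Phi_+^*)$, which has no reason to vanish; and since $\mathcal E(\Phi)$ is generally not a singleton, even if some $K$ satisfies the exact identity, the one supplied by hyponormality need not. A repair would have to work instead with the partial isometry $V$ implementing $H_{\Phi_-^*}=VH_{\Phi_+^*}$ (which exists because the two Hankel operators have equal moduli) and exploit its intertwining with the shift, rather than with an arbitrary element of $\mathcal E(\Phi)$.
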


\bigskip

On the other hand, M. Abrahamse \cite[Lemma 6]{Ab} showed that if
$T_\varphi$ is hyponormal, if $\varphi\notin H^\infty$, and if
$\varphi$ or $\overline{\varphi}$ is of bounded type then both
$\varphi$ and $\overline{\varphi}$ are of bounded type. \
However,
by contrast to the scalar case, $\Phi^*$ may not be of bounded type
even though $T_\Phi$ is hyponormal, $\Phi\notin H^\infty_{M_n}$ and
$\Phi$ is of bounded type. \
But we have a one-way implication
(see \cite [Corollary 3.5 and Remark 3.6]{GHR}):
\begin{equation}\label{1.3-3}
\hbox{$T_\Phi$ is hyponormal and $\Phi^*$ is of bounded type}\ \Longrightarrow\
\hbox{$\Phi$ is of bounded type.}
\end{equation}

\medskip

For a matrix-valued function $\Phi\in H^2_{M_{n\times r}}$, we say
that $\Delta\in H^2_{M_{n\times m}}$ is a {\it left inner divisor}
of $\Phi$ if $\Delta$ is an inner matrix function such that
$\Phi=\Delta A$ for some $A \in H^{2}_{M_{m\times r}}$ ($m\le n$). \
We also say that two matrix functions $\Phi\in H^2_{M_{n\times r}}$
and $\Psi\in H^2_{M_{n\times m}}$ are {\it left coprime} if the only
common left inner divisor of both $\Phi$ and $\Psi$ is a unitary
constant and that $\Phi\in H^2_{M_{n\times r}}$ and $\Psi\in
H^2_{M_{m\times r}}$ are {\it right coprime} if $\widetilde\Phi$ and
 $\widetilde\Psi$ are left coprime. \
Two matrix functions $\Phi$ and $\Psi$ in $H^2_{M_n}$ are said to be
{\it coprime} if they are both left and right coprime. \
We remark
that if $\Phi\in H^2_{M_n}$ is such that $\hbox{det}\,\Phi$ is not
identically zero then any left inner divisor $\Delta$ of $\Phi$ is
square, i.e., $\Delta\in H^2_{M_n}$. \
If $\Phi\in H^2_{M_n}$ is such
that $\hbox{det}\,\Phi$ is not identically zero then we say that
$\Delta\in H^2_{M_{n}}$ is a {\it right inner divisor} of $\Phi$ if
$\widetilde{\Delta}$ is a left inner divisor of $\widetilde{\Phi}$.

\medskip

The following lemma will be useful in the sequel.

\noindent
\begin{lem}\label{lem1.1}(\cite{GHR})\
For $\Phi\in L^\infty_{M_n}$, the following statements are
equivalent:
\medskip

{\rm (i)} $\Phi$ is of bounded type;

{\rm (ii)} $\hbox{ker}\, H_\Phi=\Theta H^2_{\mathbb{C}^n}$ for some
square inner matrix function $\Theta$;

{\rm (iii)} $\Phi=A\Theta^*$, where $A\in H^{\infty}_{M_n}$ and $A$
and $\Theta$ are right coprime.
\end{lem}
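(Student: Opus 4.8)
The plan is to prove the cyclic chain (i) $\Rightarrow$ (ii) $\Rightarrow$ (iii) $\Rightarrow$ (i). The unifying first step, which I would isolate at the outset, is that $\ker H_\Phi$ is always invariant under the block shift $T_{zI_n}$. Indeed, taking $\Psi=zI_n$ in (\ref{1.5-1}) and noting $\widetilde{zI_n}=zI_n$, the two identities there combine to give $H_\Phi T_{zI_n}=H_{z\Phi}=T_{zI_n}^*H_\Phi$, so that $F\in\ker H_\Phi$ forces $T_{zI_n}F\in\ker H_\Phi$. Consequently, by the Beurling--Lax--Halmos theorem, whenever $\ker H_\Phi\neq\{0\}$ I may write $\ker H_\Phi=\Theta H^2_{\mathbb{C}^m}$ for some inner $\Theta\in H^\infty_{M_{n\times m}}$ with $m\le n$.

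For (i) $\Rightarrow$ (ii), I would descend to the scalar fact (\ref{1.1}): each entry $\phi_{ij}$ of bounded type gives $\ker H_{\phi_{ij}}=\theta_{ij}H^2$ for a scalar inner $\theta_{ij}$. Putting $\theta:=\prod_{i,j}\theta_{ij}$, every $\theta_{ij}$ divides $\theta$, and a short entrywise computation then shows $(\theta I_n)H^2_{\mathbb{C}^n}\subseteq\ker H_\Phi$; in particular the kernel is nonzero, so the representation $\ker H_\Phi=\Theta H^2_{\mathbb{C}^m}$ applies. The main obstacle is to prove $\Theta$ is square, i.e. $m=n$. For this I would exploit the inclusion $(\theta I_n)H^2_{\mathbb{C}^n}\subseteq\Theta H^2_{\mathbb{C}^m}$ to write $\theta I_n=\Theta B$ for some $B\in H^2_{M_{m\times n}}$ and compare determinants: $\det(\theta I_n)=\theta^n\not\equiv 0$, whereas if $m<n$ the matrix $\Theta B$ would have pointwise rank $\le m<n$ and hence $\det(\Theta B)\equiv 0$, a contradiction. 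Thus $m\ge n$, and since also $m\le n$ I conclude $m=n$ and $\Theta$ is square inner.

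For (ii) $\Rightarrow$ (iii), from $\ker H_\Phi=\Theta H^2_{\mathbb{C}^n}$ with $\Theta$ square inner, the inclusion $\Theta H^2_{\mathbb{C}^n}\subseteq\ker H_\Phi$ together with $H_\Phi T_\Theta=H_{\Phi\Theta}$ from (\ref{1.5-1}) gives $H_{\Phi\Theta}=0$, hence $A:=\Phi\Theta\in H^2_{M_n}\cap L^\infty_{M_n}=H^\infty_{M_n}$; since $\Theta\Theta^*=I_n$, this yields $\Phi=A\Theta^*$. To obtain right coprimeness, I would suppose $A$ and $\Theta$ share a common right inner divisor $\Omega$ (necessarily square, as $\det\Theta\not\equiv 0$), writing $A=A_1\Omega$ and $\Theta=\Theta_1\Omega$; then $\Phi=A\Theta^*=A_1\Theta_1^*$, so $\Phi\Theta_1=A_1\in H^\infty_{M_n}$, giving $\Theta_1 H^2_{\mathbb{C}^n}\subseteq\ker H_\Phi=\Theta H^2_{\mathbb{C}^n}\subseteq\Theta_1 H^2_{\mathbb{C}^n}$. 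The resulting equality $\Theta_1 H^2_{\mathbb{C}^n}=\Theta_1\Omega H^2_{\mathbb{C}^n}$ forces $\Omega$ to be a unitary constant. Finally, (iii) $\Rightarrow$ (i) is routine: from $\Theta^*=\Theta^{-1}=(\det\Theta)^{-1}\operatorname{adj}(\Theta)$ I get $\Phi=(\det\Theta)^{-1}A\,\operatorname{adj}(\Theta)$, whose entries are quotients of $H^\infty$ functions by the single scalar $H^\infty$ function $\det\Theta$, so each entry of $\Phi$ is of bounded type.

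I expect the squareness of $\Theta$ in (i) $\Rightarrow$ (ii) to be the crux. It is exactly the place where the matrix-valued statement departs from the scalar one (where nonvanishing of the kernel is all that is needed), and the determinant argument above is what I would rely on to supply it.
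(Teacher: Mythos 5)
Your proposal is correct, and all three implications are complete: the shift-invariance of $\ker H_\Phi$ via (\ref{1.5-1}), the Beurling--Lax--Halmos representation, the determinant/rank comparison forcing $m=n$, the cancellation argument for right coprimeness, and the adjugate formula for (iii) $\Rightarrow$ (i) all check out. The paper itself supplies no proof of this lemma --- it is quoted from \cite{GHR} --- and your argument is essentially the standard one given there, so there is nothing to compare beyond noting that your write-up correctly fills in the one genuinely matrix-specific point, namely the squareness of $\Theta$.
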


\medskip

\noindent For an inner matrix function $\Theta\in H^\infty_{M_n}$,
write
$$
\mathcal{H}_{\Theta}:=\left(\Theta H^2_{\mathbb{C}^n}\right)^\perp
\equiv H^2_{\mathbb{C}^n}\ominus \Theta H^2_{\mathbb{C}^n}.
$$
Suppose $\Phi=[\varphi_{ij}] \in L^\infty_{M_n}$ is such that $\Phi^*$
is of bounded type. \
Then we may write
$\varphi_{ij}=\theta_{ij}\overline{b}_{ij}$, where $\theta_{ij}$ is an
inner function and $\theta_{ij}$ and $b_{ij}$ are coprime. \
Thus if
$\theta$ is the least common multiple of $\theta_{ij}$'s (i.e., the
$\theta_{ij}$ divide $\theta$ and if they divide an inner function
$\theta^\prime$ then $\theta$ in turn divides $\theta^\prime$), then
we can write
\begin{equation}\label{1.9}
\Phi=[\varphi_{ij}]=[\theta_{ij}\overline{b}_{ij}]=[\theta
\overline{a}_{ij}]= \Theta A^* \quad (\Theta=\theta I_n,\ A \in
H^{2}_{M_n}).
\end{equation}
We note that the representation (\ref{1.9}) is ``minimal," in the
sense that if $\omega I_n$ ($\omega$ is inner) is a common inner
divisor of $\Theta$ and $A$, then $\omega$ is constant. \
Let
$\Phi\equiv \Phi_-^*+\Phi_+\in L^\infty_{M_n}$ be such that $\Phi$
and $\Phi^*$ are of bounded type. \
Then in view of (\ref{1.9}) we can write
$$
\Phi_+= \Theta_1 A^* \quad\hbox{and}\quad \Phi_-= \Theta_2 B^*,
$$
where $\Theta_i =\theta_i I_n$ with an inner function $\theta_i$ for
$i=1,2$ and $A,B\in H^{2}_{M_n}$. \
In particular, if $\Phi\in
L^\infty_{M_n}$ is rational then the $\theta_i$ are chosen as finite
Blaschke products as we observed in (\ref{1.4}).
\bigskip

In this paper we consider the subnormality of block Toeplitz
operators and in particular, the matrix-valued version of Halmos's
Problem 5: Which subnormal Toeplitz operators with matrix-valued
symbols are either normal or analytic\,? \
In 1976, M. Abrahamse
showed that if $\varphi\in L^\infty$ is
such that $\varphi$ or $\overline \varphi$ is of bounded type, if $T_\varphi$
is hyponormal, and if $\hbox{\rm ker}\, [T_\varphi^*, T_\varphi]$ is
invariant under $T_\varphi$ then $T_\varphi$ is either normal or analytic. \
The
purpose of this paper is to establish a matrix-valued version of
Abrahamse's Theorem and then apply this result to solve a Toeplitz
completion problem. \
In Section 2 we make a brief sketch on
Halmos's Problem 5 and the earlier results. \
Section 3 is devoted to
get an Abrahamse's Theorem for matrix-valued symbols. \
In Section 4,
using our extension of Abrahamse's Theorem for matrix-valued
symbols, we solve the following `Toeplitz completion"
problem: find the unspecified Toeplitz entries of the partial block
Toeplitz matrix
$$
A:=\begin{bmatrix} T_{\overline b_\alpha} & ?\\ ?& T_{\overline
b_\beta}\end{bmatrix}\quad\hbox{($\alpha,\beta\in\mathbb D$)}
$$
so that $A$ becomes subnormal, where $b_\lambda$ is a Blaschke
factor of the form $b_\lambda(z):=\frac{z-\lambda}{1-\overline
\lambda z}$ ($\lambda\in \mathbb D$).

\vskip 1cm

%
%

\section{Halmos's Problem 5 and Abrahamse's Theorem}

\noindent In 1970, P.R. Halmos posed the following problem, listed
as Problem 5 in his lecture ``Ten problems in Hilbert space"
\cite{Hal1}, \cite{Hal2}:
$$
\hbox{Is every subnormal Toeplitz operator either normal or
analytic\,?}
$$
A Toeplitz operator $T_\varphi$ is called {\it analytic} if $\varphi\in H^\infty$. \
Any analytic Toeplitz operator is easily seen to be
subnormal: indeed, $T_\varphi h=P(\varphi h)=\varphi h =M_\varphi h$ for $h\in
H^2$, where $M_\varphi$ is the normal operator of multiplication by
$\varphi$ on $L^2$. \
The question is natural because the two classes,
the normal and analytic Toeplitz operators, are fairly well
understood and are subnormal. \
In 1984, Halmos's Problem 5 was
answered in the negative by C. Cowen and J. Long \cite{CoL}. \
However, unfortunately, Cowen and Long's construction does not
provide an intrinsic connection between subnormality and the theory
of Toeplitz operators. \
Until now researchers have been unable to
characterize subnormal Toeplitz operators in terms of their
symbols. \

We would like to reformulate Halmos's Problem 5 as follows:
\medskip

\noindent {\bf Halmos's Problem 5 reformulated.} {\it Which Toeplitz
operators are subnormal\,?}

\bigskip

The most interesting partial answer to Halmos's Problem 5 was given
by M. Abrahamse \cite{Ab}. \
M. Abrahamse gave a general sufficient
condition for the answer to Halmos's Problem 5 to be affirmative.
Abrahamse's Theorem can be then stated as:

\medskip

\noindent{\bf Abrahamse's Theorem} (\cite[Theorem]{Ab}). {\it Let
$\varphi\in L^\infty$ be such that $\varphi$
or $\overline \varphi$ is of bounded type. \
If $T_\varphi$ is hyponormal
and $\hbox{\rm ker}\,[T_\varphi^*, T_\varphi]$ is invariant under $T_\varphi$
then $T_\varphi$ is normal or analytic. }
\bigskip

Consequently, if $\varphi\in L^\infty$ is
such that $\varphi$ or $\overline \varphi$ is of bounded type, then every
subnormal Toeplitz operator must be either normal or analytic. \

We say that a block Toeplitz operator $T_\Phi$ is {\it analytic} if
$\Phi\in H^\infty_{M_n}$. \
Evidently, any analytic block Toeplitz
operator with a normal symbol is subnormal because the
multiplication operator $M_\Phi$ is a normal extension of $T_\Phi$. \
As a first inquiry in the above reformulation of Halmos's Problem 5
the following question can be raised:
$$
\hbox{Is Abrahamse's Theorem valid for block Toeplitz operators}\,?
$$
In [CHL2, Theorem 3.5], the authors gave a matrix-valued version of
Abrahamse's Theorem. As a corollary the following result was shown:

\medskip

\begin{thm}\label{thm2.1} ([CHL2, Corollary 3.9]).
Suppose $\Phi=\Phi_-^*+ \Phi_+\in L^\infty_{M_n}$ is a matrix-valued
rational function. \
Then in view of (\ref{1.9}) and (\ref{1.4}), we may write
\begin{equation}\label{2.1}
\Phi_- = B^*\Theta,
\end{equation}
where $\Theta:=\theta I_n$ with a finite Blaschke product $\theta$. \
Assume that $B$ and $\Theta$ are (left) coprime. If $T_{\Phi}$ is
subnormal then $T_{\Phi}$ is either normal or analytic.
\end{thm}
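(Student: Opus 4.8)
The plan is to assume that $T_\Phi$ is subnormal but not analytic and to deduce that $T_\Phi$ is normal, with Lemma~\ref{lem1.3} as the final step. First I would record the elementary reductions. Since subnormal operators are hyponormal, Lemma~\ref{lem1.2} shows that $\Phi$ is normal and $\mathcal{E}(\Phi)\neq\emptyset$; fix $K\in\mathcal{E}(\Phi)$, so $\|K\|_\infty\le 1$ and $\Phi-K\Phi^*\in H^\infty_{M_n}$. Because $\Phi$ is rational, $\Phi^*$ is of bounded type, and then the implication (\ref{1.3-3}) forces $\Phi$ itself to be of bounded type. Hence both $\Phi$ and $\Phi^*$ are rational and of bounded type, and by (\ref{1.9}) and (\ref{1.4}) I may write $\Phi_+=\Theta_1 A^*$ and $\Phi_-=\Theta_2 B^*$ in minimal (coprime) form, with $\Theta_i=\theta_i I_n$ and $\theta_i$ a finite Blaschke product. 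The goal then becomes: assuming $\theta_2$ is nonconstant (equivalently, $T_\Phi$ is not analytic), establish the normality identity $\Phi_+-\Phi_+(0)=(\Phi_--\Phi_-(0))U$ of Lemma~\ref{lem1.3} for some constant unitary $U$.

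The single extra ingredient that subnormality supplies, beyond hyponormality, is the invariance of the self-commutator kernel. If $N=\left[\begin{smallmatrix}T_\Phi & C\\ 0 & D\end{smallmatrix}\right]$ is a normal extension of $T_\Phi$ (with $H^2_{\mathbb{C}^n}$ invariant), then comparing the $(1,1)$ and $(2,1)$ blocks of $N^*N=NN^*$ yields $[T_\Phi^*,T_\Phi]=CC^*$ together with $C^*T_\Phi=DC^*$; consequently $\ker[T_\Phi^*,T_\Phi]=\ker C^*$ is invariant under $T_\Phi$. This is the only place the normal extension enters, and it reduces the theorem to an Abrahamse-type statement about a hyponormal operator whose self-commutator kernel is $T_\Phi$-invariant.

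Next I would make the self-commutator explicit. On $H^2_{\mathbb{C}^n}$ one has $H_\Phi=H_{\Phi_-^*}$ and $H_{\Phi^*}=H_{\Phi_+^*}$, so Lemma~\ref{lem1.1}, together with the coprimeness hypothesis, gives $\ker H_\Phi=\Theta_2 H^2_{\mathbb{C}^n}$ and $\ker H_{\Phi^*}=\Theta_1 H^2_{\mathbb{C}^n}$. Since $[T_\Phi^*,T_\Phi]=H_{\Phi^*}^*H_{\Phi^*}-H_\Phi^*H_\Phi\ge 0$, comparing norms gives $\ker H_{\Phi^*}\subseteq\ker H_\Phi$, i.e. $\Theta_1 H^2_{\mathbb{C}^n}\subseteq\Theta_2 H^2_{\mathbb{C}^n}$; because the $\Theta_i$ are scalar inner functions times $I_n$, this means $\theta_2\mid\theta_1$, say $\theta_1=\theta_2\theta_3$. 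In the rational case both Hankel operators have finite rank, so $[T_\Phi^*,T_\Phi]$ has finite rank and vanishes on $\Theta_1 H^2_{\mathbb{C}^n}$; thus $\ker[T_\Phi^*,T_\Phi]=\Theta_1 H^2_{\mathbb{C}^n}\oplus\mathcal{M}$, where $\mathcal{M}\subseteq\mathcal{H}_{\Theta_1}$ is a finite-dimensional space computable in principle from $A$, $B$ and $K$.

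The crux, and the step I expect to be the main obstacle, is to feed the invariance $T_\Phi\bigl(\ker[T_\Phi^*,T_\Phi]\bigr)\subseteq\ker[T_\Phi^*,T_\Phi]$ into this description and extract the algebraic identity required by Lemma~\ref{lem1.3}. Concretely, I would evaluate $T_\Phi$ on $\Theta_1 H^2_{\mathbb{C}^n}$, where the coanalytic part collapses since $\Phi_-^*\Theta_1 g=\theta_3 Bg$ is analytic, and on the finite-dimensional summand $\mathcal{M}$, and then demand that both images again lie in $\Theta_1 H^2_{\mathbb{C}^n}\oplus\mathcal{M}$. The expectation is that this forces $\theta_3$ to be constant, so that $\Theta_1=\Theta_2$, and then, using the coprimeness of $A$ with $\Theta_1$ and of $B$ with $\Theta_2$ together with the intertwining $H_\Phi=T_{\widetilde K}^*H_{\Phi^*}$ coming from $K\in\mathcal{E}(\Phi)$ and (\ref{1.5-1}), to pin $K$ down to a constant unitary $U$ and so obtain the Lemma~\ref{lem1.3} identity. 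The difficulty is genuinely in this translation: unlike the scalar case, the noncommutativity of the matrix coefficients means the Beurling-type kernel bookkeeping must be carried out with both one-sided coprimeness conditions at once, and the constant terms of $\Phi_\pm$ must be tracked carefully throughout. Once the identity of Lemma~\ref{lem1.3} is in hand, normality of $T_\Phi$ follows at once, completing the dichotomy.
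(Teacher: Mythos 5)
Your reductions are all sound: subnormality yields hyponormality plus the $T_\Phi$-invariance of $\hbox{ker}\,[T_\Phi^*,T_\Phi]$ (your block-matrix computation with the normal extension is the standard and correct way to see this), the bounded-type bookkeeping via (\ref{1.3-3}) and Lemma \ref{lem1.1} is right, and it is true that $[T_\Phi^*,T_\Phi]$ vanishes on $\Theta_1 H^2_{\mathbb{C}^n}$. But the argument stops exactly where the theorem begins. Everything you establish uses only hyponormality together with one soft consequence of subnormality; the entire content of an Abrahamse-type theorem is the passage from ``$\hbox{ker}\,[T_\Phi^*,T_\Phi]$ contains $\Theta_1 H^2_{\mathbb{C}^n}$ and is $T_\Phi$-invariant'' to ``$T_\Phi$ is normal or analytic,'' and you present that passage only as an ``expectation.'' Two things must actually be proved. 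First, writing $\Theta_1=\Theta_0\Theta_2$, the kernel containment must be enlarged from $\Theta_0\Theta_2 H^2_{\mathbb{C}^n}$ to $\Theta_0 H^2_{\mathbb{C}^n}$; this is precisely where the invariance hypothesis is consumed, and it is STEP 1 of the proof of Theorem \ref{thm3.7} (quoted there from [CHL2, Theorem 3.5]). Second, from the resulting inclusion $\mathcal{H}_{\widetilde{\Theta}_2}\subseteq \hbox{cl ran}\, H_{A\Theta_2^*}\subseteq \hbox{ker}\,(I-T_{\widetilde K}T_{\widetilde K}^*)$ one must deduce that $K$ is inner and then that $K$ is constant (STEPs 2, 5 and 6 of that proof), at which point (\ref{3.13-2}) gives $[T_\Phi^*,T_\Phi]=0$ outright. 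Neither step is routine; the second in particular requires the comparison of left and right coprimeness (Lemma \ref{lem5.5} in the rational case) that you only allude to when you mention the ``one-sided coprimeness conditions.''

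A smaller structural point: routing the conclusion through Lemma \ref{lem1.3} is backwards. That lemma \emph{characterizes} normality by the identity $\Phi_+-\Phi_+(0)=(\Phi_--\Phi_-(0))U$, so deriving that identity from subnormality is exactly as hard as proving normality directly. The paper's mechanism instead concludes $[T_\Phi^*,T_\Phi]=0$ from the factorization $[T_\Phi^*,T_\Phi]=H_{\Phi_+^*}^*(I-T_{\widetilde K}T_{\widetilde K}^*)H_{\Phi_+^*}$ once $K$ is known to be a constant unitary, and invokes Lemma \ref{lem1.3} only afterwards (in Section 4) to read off the explicit form of the symbol. Your intermediate target ``$\theta_3$ is constant'' is indeed a necessary consequence of normality, but identifying the correct target is not the difficulty; producing the argument that forces it is, and that argument is absent.
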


\bigskip

Note that in the coprime factorization (\ref{2.1}) of $\Phi_-$,
$\Theta$ is a {\it diagonal-constant} inner function, i.e., a
diagonal inner function, constant along the diagonal. \
This
assumption seems to be too rigid. \
To see this, we consider the following example.

\medskip

\begin{ex}\label{ex2.2}
Let $b_\alpha:=\frac{z-\alpha}{1-\overline\alpha z}$
($\alpha\in\mathbb D$), let $\theta$ be an inner function which is
coprime with $b_\alpha$ and $b_\beta$ ($\alpha\ne \beta$) and
consider the following matrix-valued function
\begin{equation}\label{2.2}
\Phi:=\begin{bmatrix} \overline\theta & \overline{\theta}\overline b_\alpha+c
\theta b_\beta\\ \overline b_\beta + c \theta^2b_\alpha &
\overline\theta\end{bmatrix}\,, \quad\hbox{where $c\in\mathbb R$
with $c\ge \left|\left| \begin{bmatrix}
                       \theta& b_\beta\\ \theta b_\alpha&\theta\end{bmatrix}\right|\right|_\infty$}\,.
\end{equation}
Then
$$
\Phi_+=c \begin{bmatrix} 0&\theta b_\beta\\
\theta^2b_\alpha&0\end{bmatrix} \quad\hbox{and}\quad
\Phi_-=\begin{bmatrix} \theta&b_\beta\\ \theta
b_\alpha&\theta\end{bmatrix}.
$$
A straightforward calculation shows that $\Phi^*\Phi=\Phi\Phi^*$. \
If
$K:=\frac{1}{c}\begin{bmatrix} \theta&b_\beta\\ \theta
b_\alpha&\theta\end{bmatrix}$, then
$$
||K||_\infty \le 1\quad\hbox{and}\quad \Phi_-^*=K\Phi_+^*,
$$
which implies that by Lemma \ref{lem1.2}, $T_\Phi$ is hyponormal. \
But a direct calculation shows that $T_\Phi$ is not normal. \
On the other hand, we observe
$$
\widetilde{\Phi_-}=\begin{bmatrix} \widetilde \theta &\widetilde\theta\widetilde{b_\alpha}\\
              \widetilde{b_\beta}&\widetilde\theta\end{bmatrix}\,,
$$
so that
$$
\widetilde{\Phi_-}^*\begin{bmatrix} f\\ g\end{bmatrix}
               = \begin{bmatrix} \overline{\widetilde \theta} &\overline{\widetilde{b_\beta}}\\
                     \overline{\widetilde\theta} \overline{\widetilde{b_\alpha}}&\overline{\widetilde\theta}\end{bmatrix}
                         \begin{bmatrix} f\\ g\end{bmatrix}\in H^2_{\mathbb C^2}
             \Longleftrightarrow \begin{cases} \overline{\widetilde\theta}f+\overline{\widetilde{b_\beta}}g \in H^2\\
                                        \overline{\widetilde\theta} \overline{\widetilde{b_\alpha}}f +
                                            \overline{\widetilde\theta}g \in H^2 \end{cases},
$$
which implies (by using the assumption that $\theta$ is coprime with
$b_\alpha$ and $b_\beta$),
$$
f\in \widetilde\theta \widetilde{b_\alpha} H^2\quad\hbox{and}\quad
                                          g\in \widetilde\theta \widetilde{b_\beta} H^2 \,.
$$
Thus we have
$$
\hbox{ker}\, H_{\widetilde{\Phi_-}^*}=\begin{bmatrix}\widetilde\theta \widetilde{b_\alpha} &0\\
                      0&\widetilde\theta \widetilde{b_\beta}\end{bmatrix}\, H^2_{\mathbb C^2}\,,
$$
so that, by Lemma \ref{lem1.1}, we can get
$$
\widetilde{\Phi_-} =\begin{bmatrix} \widetilde \theta &\widetilde\theta\widetilde{b_\alpha}\\
              \widetilde{b_\beta}&\widetilde\theta\end{bmatrix}
              =\begin{bmatrix}\widetilde\theta \widetilde{b_\alpha} &0\\
                      0&\widetilde\theta \widetilde{b_\beta}\end{bmatrix}
               \begin{bmatrix} \widetilde{b_\alpha}&\widetilde\theta\\
                                     1&\widetilde{b_\beta}\end{bmatrix}^*
                \equiv \widetilde{\Theta} \widetilde B^*
       \quad\hbox{(right coprime factorization)}\,,
$$
where
$$
\widetilde\Theta:=\begin{bmatrix}\widetilde\theta \widetilde{b_\alpha} &0\\
                      0&\widetilde\theta \widetilde{b_\beta}\end{bmatrix}\quad\hbox{and}\quad
\widetilde B:=  \begin{bmatrix} \widetilde{b_\alpha}&\widetilde\theta\\
                                     1&\widetilde{b_\beta}\end{bmatrix}.
$$
Hence we get
$$
\Phi_-= B^*\Theta=\begin{bmatrix} b_\alpha &1\\ \theta&
b_\beta\end{bmatrix}^*
                       \begin{bmatrix} \theta b_\alpha &0\\ 0&\theta b_\beta\end{bmatrix}
                         \quad\hbox{(left coprime factorization)}.
$$
But since $\Theta\equiv \begin{bmatrix} \theta b_\alpha&0\\ 0&
\theta b_\beta\end{bmatrix}$ is not diagonal-constant
we cannot apply Theorem \ref{thm2.1} to determine whether
or not $T_\Phi$ is subnormal. \
However, as we will see in the sequel,
we can conclude (using Theorem \ref{thm3.7} below) that
$T_\Phi$ is not subnormal.
\hfill$\square$
\end{ex}

\medskip

%
%

\section{Abrahamse's Theorem for matrix-valued symbols}

\noindent
Recall the representation (\ref{1.9}), and for $\Psi\in
L^\infty_{M_n}$ such that $\Psi^*$ is of bounded type, write
$\Psi=\Theta_2B^*=B^*\Theta_2$. \
Let $\Omega$ be the greatest
common left inner divisor of $B$ and $\Theta_2$. Then $B=\Omega
B_{\ell}$ and $\Theta_2=\Omega \Omega_2$ for some $B_\ell \in
H^{2}_{M_n}$ and some inner matrix $\Omega_2$. \
Therefore we can write
\begin{equation}\label{3.1}
\Psi={B^*_{\ell}}\Omega_2,\quad\hbox{where $B_\ell$ and $\Omega_2$
are left coprime:}
\end{equation}
in this case, $B_\ell^*\Omega_2$ is called a {\it left coprime
factorization} of $\Psi$. \
Similarly,
\begin{equation}\label{3.2}
\Psi=\Delta_2{B^*_{r}}, \quad\hbox{where $B_r$ and $\Delta_2$ are
right coprime:}
\end{equation}
in this case, $\Delta_2B_r^*$ is called a {\it right coprime
factorization} of $\Psi$.

\bigskip

To prove our main results, we need several auxiliary lemmas.

We begin with:
\medskip

\noindent
\begin{lem}\label{lem3.1}
(a) Let $\Phi = \Phi_-^*+ \Phi_+ \in L^{\infty}_{M_n}$ be such that
$\Phi$ and $\Phi^*$ are of bounded type. \
Then in view of (\ref{1.9}), we may write
$$
\Phi_+ =A^* \Theta_1 \quad \hbox{and} \quad \Phi_- = B^*\Theta_2\,,
$$
where $\Theta_i:=\theta_i I_n$  with an inner function $\theta_i \
(i=1,2)$. \
If $T_{\Phi}$ is hyponormal, then $\Theta_2$ is a right
inner divisor of $\Theta_1$.

(b) In view of (\ref{3.2}), $\Phi\in L^{\infty}_{M_n}$ may be
written as
$$
\Phi_+ =\Delta_1 A_r^*\ \hbox{\rm (right coprime factorization)}
\quad \hbox{and} \quad \Phi_- = \Delta_2 B_r^*\ \hbox{\rm (right
coprime factorization)}.
$$
If $T_{\Phi}$ is hyponormal, then $\Delta_2$ is a left inner divisor
of $\Delta_1$.
\end{lem}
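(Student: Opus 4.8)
The plan is to derive both statements from a single kernel-inclusion for block Hankel operators and then translate that inclusion into the two divisibility claims. First I would record the consequences of hyponormality. By Lemma~\ref{lem1.2}, hyponormality of $T_\Phi$ forces $\Phi$ to be normal, so $T_{\Phi^*\Phi}=T_{\Phi\Phi^*}$; combining this with the commutator identity coming from (\ref{1.5}) yields the clean form
\begin{equation*}
[T_\Phi^*,T_\Phi]=H_{\Phi^*}^*H_{\Phi^*}-H_\Phi^*H_\Phi\ge 0.
\end{equation*}
Reading this as $\|H_{\Phi^*}F\|\ge\|H_\Phi F\|$ for all $F\in H^2_{\mathbb C^n}$ gives $\ker H_{\Phi^*}\subseteq\ker H_\Phi$. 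Since $\Phi^*=\Phi_+^*+\Phi_-$ with $\Phi_-$ analytic and $\Phi=\Phi_++\Phi_-^*$ with $\Phi_+$ analytic, the analytic summands contribute nothing to the respective Hankel operators, so $H_{\Phi^*}=H_{\Phi_+^*}$ and $H_\Phi=H_{\Phi_-^*}$. Hence the entire argument rests on the inclusion $\ker H_{\Phi_+^*}\subseteq\ker H_{\Phi_-^*}$.

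For part (b) I would simply identify these two kernels through Lemma~\ref{lem1.1}. From the right coprime factorizations $\Phi_+=\Delta_1A_r^*$ and $\Phi_-=\Delta_2B_r^*$ one gets $\Phi_+^*=A_r\Delta_1^*$ and $\Phi_-^*=B_r\Delta_2^*$ with $A_r,\Delta_1$ and $B_r,\Delta_2$ right coprime, so Lemma~\ref{lem1.1} gives $\ker H_{\Phi_+^*}=\Delta_1H^2_{\mathbb C^n}$ and $\ker H_{\Phi_-^*}=\Delta_2H^2_{\mathbb C^n}$. The inclusion above then reads $\Delta_1H^2_{\mathbb C^n}\subseteq\Delta_2H^2_{\mathbb C^n}$, and the Beurling--Lax--Halmos theorem produces an inner matrix $\Omega$ with $\Delta_1=\Delta_2\Omega$; that is exactly the assertion that $\Delta_2$ is a left inner divisor of $\Delta_1$.

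For part (a) I would avoid trying to force $\Theta_i=\theta_iI_n$ to be the kernel-generating inner function (in general the true generators are proper divisors of $\Theta_i$); instead I would characterize $\theta_i$ directly. Writing the entries of $\Phi_+$ and $\Phi_-$ in coprime form, $\theta_1$ and $\theta_2$ are the least common multiples of the entrywise inner factors, and a short computation using (\ref{1.5-1}) shows, for a scalar inner function $\theta$,
\begin{equation*}
\theta I_nH^2_{\mathbb C^n}\subseteq\ker H_{\Phi_+^*}\iff\theta_1\mid\theta,\qquad
\theta I_nH^2_{\mathbb C^n}\subseteq\ker H_{\Phi_-^*}\iff\theta_2\mid\theta,
\end{equation*}
since $\theta\,\Phi_\pm^*$ is analytic precisely when every entrywise inner factor divides $\theta$. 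Taking $\theta=\theta_1$ in the first equivalence gives $\theta_1I_nH^2_{\mathbb C^n}\subseteq\ker H_{\Phi_+^*}\subseteq\ker H_{\Phi_-^*}$, whence the second equivalence forces $\theta_2\mid\theta_1$. Because $\Theta_i=\theta_iI_n$ is central, left and right inner divisibility coincide, so this is precisely the statement that $\Theta_2$ is a right inner divisor of $\Theta_1$.

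I expect the only real obstacle to be the correct bookkeeping of which inner function generates each Hankel kernel: in (b) the right coprime factorizations make Lemma~\ref{lem1.1} applicable verbatim, but in (a) the central functions $\Theta_i$ coming from (\ref{1.9}) overshoot the true kernel generators, so one must pass through the least common multiple characterization rather than Lemma~\ref{lem1.1} directly. The normality of $\Phi$ (needed to cancel $T_{\Phi^*\Phi}-T_{\Phi\Phi^*}$) and the coincidence of left and right divisibility for central inner functions are the two facts that keep the argument honest.
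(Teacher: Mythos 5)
Your argument is correct and follows essentially the same route as the paper's source for this lemma (the proof here is only a citation to [CHL2, Lemmas 3.1 and 3.2]): hyponormality plus normality of the symbol yields $\ker H_{\Phi_+^*}\subseteq \ker H_{\Phi_-^*}$, which is converted into left inner divisibility via Lemma \ref{lem1.1} and Beurling--Lax--Halmos in part (b), exactly as in the scalar inclusion $\theta_+H^2\subseteq\theta_0H^2$ displayed in Section 1. Your handling of part (a) via the least-common-multiple characterization of $\theta I_nH^2_{\mathbb C^n}\subseteq\ker H_{\Phi_\pm^*}$ is the right way to deal with the fact that $\Theta_i=\theta_iI_n$ need not generate the Hankel kernel, and the observation that left and right divisibility coincide for diagonal-constant inner functions closes the argument correctly.
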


\begin{proof}
See [CHL2, Lemmas 3.1 and 3.2].
\end{proof}

\bigskip

In the sequel, when we consider the symbol $\Phi=\Phi_-^* +
\Phi_+\in L^{\infty}_{M_n}$, which is such that $\Phi$ and $\Phi^*$
are of bounded type and for which $T_\Phi$ is hyponormal, we will,
in view of Lemma 3.1, assume that
\begin{equation}\label{5-0}
\Phi_+=  A^* \Omega_1\Omega_2\quad \hbox{and} \quad \Phi_-=
B_\ell^*\Omega_2\ \hbox{(left coprime factorization)},
\end{equation}
where $\Omega_1 \Omega_2=\Theta=\theta I_n$. We also note that
$\Omega_2 \Omega_1=\Theta$: indeed, if
$\Omega_1\Omega_2=\Theta=\theta I_n$, then $(\overline\theta I_n
\Omega_1)\Omega_2=I_n$, so that $\Omega_1(\overline\theta I_n
\Omega_2)=I_n$, which implies that $(\overline\theta I_n
\Omega_2)\Omega_1=I_n$, and hence $\Omega_2\Omega_1=\theta
I_n=\Theta$.

\bigskip


We recall the inner-outer factorization of vector-valued functions. \
If $D$ and $E$ are Hilbert spaces and if $F$ is a function with
values in $\mathcal{B}(E,D)$ such that $F(\cdot)e\in H^2_{D}(\mathbb
T)$ for each $e\in E$, then $F$ is called a strong $H^2$-function. \
The strong $H^2$-function $F$ is called an {\it inner} function if
$F(\cdot)$ is an isometric operator from $D$ into $E$. \
Write
$\mathcal{P}_{E}$ for the set of all polynomials with values in $E$,
i.e., $p(\zeta)=\sum_{k=0}^n \widehat{p}(k)\zeta^k$, $\widehat{p}(k)\in E$. \
Then the function $Fp=\sum_{k=0}^n
F\widehat{p}(k) z^k$ belongs to $H^2_{D}(\mathbb T)$. \
The strong
$H^2$-function $F$ is called {\it outer} if
$$
\hbox{cl}\, F\cdot\mathcal{P}_E=H^2_{D}(\mathbb T).
$$
Note that every $F\in H^2_{M_n}$ is a strong $H^2$-function. \
We then have an analogue of the scalar Inner-Outer Factorization
Theorem.
\bigskip

\noindent{\bf Inner-Outer Factorization.} (cf. [Ni])\quad Every
strong $H^2$-function $F$ with values in $\mathcal{B}(E, D)$ can be
expressed in the form
$$
F=F^iF^e,
$$
where $F^e$ is an outer function with values in $\mathcal{B}(E,
D^\prime)$ and $F^i$ is an inner function with values in
$\mathcal{B}(D^\prime,D)$ for some Hilbert space $D^\prime$.

\bigskip


We introduce a key idea which provides a connection
between left coprime-ness and right coprime-ness.

\medskip

\begin{df}\label{def5.1}
If $\Delta\in H^\infty_{M_n}$ is an inner function, we define
$$
D(\Delta):= \hbox{GCD}\,\bigl\{\theta I_n:\ \hbox{$\theta$ is inner
and $\Delta$ is a (left) inner divisor of $\theta I_n$} \bigr\}\,,
$$
where $\hbox{GCD}\,(\cdot)$ denotes the greatest common inner
divisor.
\end{df}

\bigskip

\begin{lem}\label{lem5.1}
If $\Delta\in H^\infty_{M_n}$ is an inner function then
\begin{equation}\label{5.1}
D(\Delta)=\delta I_n\quad\hbox{for some inner funtion $\delta$}.
\end{equation}
\end{lem}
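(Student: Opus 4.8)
The plan is to translate the matrix GCD of Definition \ref{def5.1} into a statement about Beurling-type invariant subspaces and then exploit that the competitors $\theta I_n$ are scalar multiples of the identity. First I would introduce the set of scalar inner functions
$$
\Sigma:=\bigl\{\theta\ \hbox{inner}:\ \Delta\ \hbox{is a left inner divisor of}\ \theta I_n\bigr\},
$$
and observe that it is nonempty. Indeed, since $\Delta$ is inner, $\det\Delta$ is a scalar inner function, and the adjugate identity $\Delta\cdot\hbox{adj}(\Delta)=(\det\Delta)I_n$, with $\hbox{adj}(\Delta)\in H^\infty_{M_n}$, exhibits $\Delta$ as a left inner divisor of $(\det\Delta)I_n$; hence $\det\Delta\in\Sigma$. (As usual one checks that the cofactor quotient is automatically inner, since both $\Delta$ and $(\det\Delta)I_n$ are.)

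Next I would invoke the dictionary between left inner divisibility and inclusion of shift-invariant subspaces: for square inner functions $\Omega$ and $F$, $\Omega$ is a left inner divisor of $F$ if and only if $FH^2_{\mathbb C^n}\subseteq \Omega H^2_{\mathbb C^n}$. Consequently an inner $\Omega$ is a common left inner divisor of the family $\{\theta I_n:\theta\in\Sigma\}$ precisely when $\Omega H^2_{\mathbb C^n}\supseteq \theta H^2_{\mathbb C^n}$ for every $\theta\in\Sigma$, that is, when $\Omega H^2_{\mathbb C^n}$ contains the join $\bigvee_{\theta\in\Sigma}\theta H^2_{\mathbb C^n}$. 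The greatest such divisor is then the $\Omega$ for which $\Omega H^2_{\mathbb C^n}$ is the \emph{smallest} invariant subspace containing every $\theta H^2_{\mathbb C^n}$; by the Beurling--Lax--Halmos theorem this smallest subspace is of the form $\Omega H^2_{\mathbb C^n}$ and equals the join itself, so that
$$
D(\Delta)\,H^2_{\mathbb C^n}=\bigvee_{\theta\in\Sigma}\theta H^2_{\mathbb C^n}.
$$

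Finally I would identify this join. Because each $\theta$ is scalar, $\theta H^2_{\mathbb C^n}=(\theta H^2)\otimes\mathbb C^n$, and since the tensor factor $\mathbb C^n$ is finite-dimensional and common to all members, the closed span factors through it:
$$
\bigvee_{\theta\in\Sigma}\theta H^2_{\mathbb C^n}=\Bigl(\bigvee_{\theta\in\Sigma}\theta H^2\Bigr)\otimes\mathbb C^n.
$$
The inner bracket is a \emph{scalar} invariant subspace, so by the classical Beurling theorem $\bigvee_{\theta\in\Sigma}\theta H^2=\delta H^2$, where $\delta$ is the greatest common inner divisor of the scalar family $\Sigma$. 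Hence $D(\Delta)H^2_{\mathbb C^n}=\delta H^2_{\mathbb C^n}=(\delta I_n)H^2_{\mathbb C^n}$, and comparing generating inner functions (which are unique up to a right constant unitary) yields $D(\Delta)=\delta I_n$, proving (\ref{5.1}).

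I expect the main obstacle to be the middle step: making the correspondence between the greatest common left inner divisor and the join $\bigvee_{\theta\in\Sigma}\theta H^2_{\mathbb C^n}$ fully rigorous, including the appeal to Beurling--Lax--Halmos to guarantee that this join is generated by a single square inner function and that minimality of the subspace corresponds exactly to maximality in the divisibility order. The remaining ingredients — nonemptiness of $\Sigma$ via the adjugate and the factoring of the join through the fixed finite-dimensional $\mathbb C^n$ — are routine once the subspace picture is in place.
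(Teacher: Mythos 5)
Your proof is correct and follows essentially the same route as the paper: both identify $D(\Delta)H^2_{\mathbb C^n}$ with the join $\bigvee_{\theta}\theta H^2_{\mathbb C^n}$, factor it coordinatewise through the fixed $\mathbb C^n$, and apply the scalar Beurling theorem to obtain $\delta=\hbox{GCD}\{\theta\}$, exactly as in the paper's display (\ref{5.2}). The only (immaterial) difference is that you establish nonemptiness of the family via the adjugate identity $\Delta\,\hbox{adj}(\Delta)=(\det\Delta)I_n$, whereas the paper uses the factorization $\Delta=\Theta A^*$ with $\Theta=\theta I_n$ coming from (\ref{1.9}).
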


\begin{proof}
Let $\Delta\in H^\infty_{M_n}$ be inner. \
Then since $\Delta^*$ is
evidently of bounded type, we can write, in view of (\ref{1.9}),
$$
\Delta=\Theta A^*\quad\hbox{with $\Theta\equiv\theta I_n$ for an
inner function $\theta$ and $A\in H^2_{M_n}$}.
$$
But since $\Delta $ is inner it follows that $A^*A=I_n$, so that
$\Delta A=\Theta$. \
This says that $\Delta$ is a left inner divisor of $\theta I_n$. \
Thus $D(\Delta)$ always exists for each inner
function $\Delta\in H^\infty_{M_n}$. \
For (\ref{5.1}), we observe that for any index set $I$,
\begin{equation}\label{5.2}
D(\Delta)H^2_{\mathbb C^n}=\bigvee_{i\in I} (\theta_i
I_n)H^2_{\mathbb C^n}=\bigoplus_{j=1}^n \bigvee_{i\in I} \theta_i
H^2 =\bigoplus_{j=1}^n \hbox{GCD}\,\bigl\{\theta_i: i\in I \bigr\}
H^2,
\end{equation}
which implies that $D(\Delta)=\delta I_n$ with
$\delta:=\hbox{GCD}\,\bigl\{\theta_i: i\in I \bigr\}$, giving
(\ref{5.1}).
\end{proof}

\bigskip

Note that $D(\Delta)$ is unique up to a diagonal-constant inner function
of the form $e^{i\xi} I_n$.

\bigskip

If one of two inner functions is diagonal-constant then the ``left"
coprime-ness  and the ``right" coprime-ness between them coincide.

\medskip

\begin{lem}\label{lem5.2}
Let $\Delta\in H^\infty_{M_n}$ be inner and $\Theta:=\theta I_n$ for
some inner function $\theta$. \
Then the following are equivalent:

\medskip

{\rm (a)} $\Theta$ and $\Delta$ are left coprime;

{\rm (b)} $\Theta$ and $\Delta$ are right coprime;

{\rm (c)} $\Theta$ and $D(\Delta)$ are coprime.
\end{lem}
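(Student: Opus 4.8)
The plan is to reduce all three statements to a single scalar condition, namely that $\theta$ and $\delta$ be coprime, where $D(\Delta)=\delta I_n$ as in Lemma~\ref{lem5.1}. First I would record the dictionary between divisibility and model spaces: for inner functions, left coprimeness of $\Theta=\theta I_n$ and $\Delta$ is equivalent to $\mathcal H_{\Theta}\cap\mathcal H_{\Delta}=\{0\}$, since the greatest common left inner divisor $\Omega$ of $\theta I_n$ and $\Delta$ satisfies $\Omega H^2_{\mathbb C^n}=\overline{\theta I_n H^2_{\mathbb C^n}+\Delta H^2_{\mathbb C^n}}$, so $\Omega$ is unitary precisely when the orthogonal complement $\mathcal H_{\theta I_n}\cap\mathcal H_\Delta$ vanishes. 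A direct computation (as in the proof of Lemma~\ref{lem5.1}) shows that for two diagonal-constant inner functions the greatest common left inner divisor is $\gcd(\theta,\delta)\,I_n$, and the same holds on the right after applying the $\,\widetilde{\phantom{x}}\,$ operation; hence statement (c) is equivalent to the scalar condition that $\theta$ and $\delta$ be coprime. This also reduces (a) to the condition $\mathcal H_{\theta I_n}\cap\mathcal H_\Delta=\{0\}$.

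Next I would dispatch the easy implication (c)$\Rightarrow$(a). Because $\Delta$ is a left inner divisor of $D(\Delta)=\delta I_n$ (this is exactly what makes $D(\Delta)$ well defined in Lemma~\ref{lem5.1}), we have $\delta I_n H^2_{\mathbb C^n}\subseteq \Delta H^2_{\mathbb C^n}$ and therefore $\mathcal H_\Delta\subseteq \mathcal H_{\delta I_n}$. If $\theta$ and $\delta$ are coprime then $\mathcal H_{\theta I_n}\cap\mathcal H_{\delta I_n}=\mathcal H_{\gcd(\theta,\delta)I_n}=\{0\}$, so a fortiori $\mathcal H_{\theta I_n}\cap\mathcal H_\Delta=\{0\}$, giving (a).

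The hard part is the converse (a)$\Rightarrow$(c), i.e. producing a nonzero common vector when $\theta$ and $\delta$ share a nonconstant inner factor $\eta$; the naive guess $\eta I_n$ need not left-divide $\Delta$, so I cannot simply exhibit $\eta I_n$ as a common divisor. Here I would exploit the minimality of $\delta$. Writing $\delta=\eta\delta'$ with $\delta'$ a proper divisor, minimality of $\delta$ forces $\Delta$ not to left-divide $\delta' I_n$, equivalently $\mathcal H_\Delta\not\subseteq\mathcal H_{\delta' I_n}$; so I can pick $w\in\mathcal H_\Delta$ with nonzero component in the decomposition $\mathcal H_{\delta I_n}=\mathcal H_{\delta' I_n}\oplus \delta'\,\mathcal H_{\eta I_n}$, say $w=w_0+\delta' w_1$ with $w_1\in\mathcal H_{\eta I_n}$ nonzero. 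The key point is that $\Delta H^2_{\mathbb C^n}$ is invariant under multiplication by the scalar $\delta'$, hence $\mathcal H_\Delta$ is invariant under $T_{\overline{\delta'}I_n}=M_{\delta'}^*$; applying this operator annihilates $w_0$ (which lies in $\ker T_{\overline{\delta'}I_n}=\mathcal H_{\delta' I_n}$) and returns $w_1$. Thus $w_1\in\mathcal H_\Delta\cap\mathcal H_{\eta I_n}$ is a nonzero vector, and since $\eta\mid\theta$ gives $\mathcal H_{\eta I_n}\subseteq \mathcal H_{\theta I_n}$, we obtain $\mathcal H_{\theta I_n}\cap\mathcal H_\Delta\ne\{0\}$, i.e. the negation of (a). I expect this invariance trick to be the crux: it converts the minimality of $\delta$ into an honest common vector without invoking any corona-type Bezout identity, which would be unavailable since coprimeness of inner functions is strictly weaker than the corona condition.

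Finally I would obtain (b) by symmetry. Using $\delta I_n=\Delta E=E\Delta$ for the inner cofactor $E$ (the one-sided relation upgrades to two-sided over the field of meromorphic matrix functions, since $\det\Delta\not\equiv 0$), an application of the $\,\widetilde{\phantom{x}}\,$ operation yields $D(\widetilde\Delta)=\widetilde\delta\,I_n$. Since (b) is by definition the left coprimeness of $\widetilde\theta I_n$ and $\widetilde\Delta$, the equivalence (a)$\Leftrightarrow$(c) applied to $\widetilde\Delta$ shows that (b) holds if and only if $\widetilde\theta$ and $\widetilde\delta$ are coprime, if and only if $\theta$ and $\delta$ are coprime, if and only if (c). Combining, (a)$\Leftrightarrow$(b)$\Leftrightarrow$(c).
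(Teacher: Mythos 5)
Your proof is correct, but it travels a genuinely different road from the paper's. The paper proves (b)$\Leftrightarrow$(c) first, working entirely inside the calculus of inner divisors: it sets $\omega:=\hbox{GCD}(\theta,\delta)$, writes $D(\Delta)=\Delta\Delta_0$, and derives a contradiction with the minimality of $\delta$ by showing that right coprimeness of $\Delta$ and $\omega I_n$ would force $\omega I_n$ to divide $\Delta_0$ and hence exhibit $\Delta$ as a left divisor of $(\delta\overline\omega)I_n$; it then transfers to (a) via the $\widetilde{\phantom{x}}$ operation. You instead prove (a)$\Leftrightarrow$(c) first, after translating left coprimeness into the vanishing of $\mathcal H_{\theta I_n}\cap\mathcal H_\Delta$ (legitimate, since $\det(\theta I_n)\not\equiv 0$ forces the greatest common left inner divisor to be square and to represent $\overline{\theta I_nH^2_{\mathbb C^n}+\Delta H^2_{\mathbb C^n}}$), and you convert the same minimality of $\delta$ into an explicit nonzero common vector by decomposing $\mathcal H_{\delta I_n}=\mathcal H_{\delta'I_n}\oplus\delta'\mathcal H_{\eta I_n}$ and applying $T_{\overline{\delta'}I_n}$, under which $\mathcal H_\Delta$ is invariant because $\delta'$ is scalar. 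Both arguments hinge on the same two facts --- that $\Delta$ left-divides $D(\Delta)$ but no $\delta'I_n$ with $\delta'$ a proper divisor of $\delta$, and that $\widetilde{D(\Delta)}=D(\widetilde\Delta)$ --- but your model-space version replaces the paper's somewhat delicate manipulation of non-commuting inner factors with a transparent orthogonal-decomposition argument that produces a concrete witness, at the cost of invoking the Beurling--Lax description of the greatest common left inner divisor; the paper's version stays within the divisor formalism it uses everywhere else. Your justification of $D(\widetilde\Delta)=\widetilde\delta I_n$ only shows one divisibility as written (the reverse follows by applying the same argument to $\widetilde\Delta$ and using $\widetilde{\widetilde\Delta}=\Delta$), but the paper asserts this identity without proof as well, so this is a matter of exposition rather than a gap.
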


\begin{proof}
We first prove the equivalence (b) $\Leftrightarrow$ (c).

(c) $\Rightarrow$ (b): Evident.

(b) $\Rightarrow$ (c): If $\Delta$ is a diagonal-constant inner
function then this is trivial. \
Thus we suppose that $\Delta$ is not diagonal-constant. Write
$$
D(\Delta):=\delta I_n\quad\hbox{and}\quad
D(\Delta)=\Delta\Delta_0=\Delta_0\Delta\quad\hbox{for a nonconstant
inner function $\Delta_0$}.
$$
Suppose $\Theta$ and $D(\Delta)$  are not coprime. \
Then $\theta$ and
$\delta$ are not coprime. \
Put
$$
\omega:=\hbox{GCD}\,(\theta,\ \delta)\quad\hbox{and}\quad
\Omega:=\omega I_n.
$$
Thus
$$
\Theta=\Omega\Theta_1\quad\hbox{and}\quad D(\Delta)=\Omega \Delta_1,
$$
where $\Theta_1=\theta_1 I_n$ and $\Delta_1=\delta_1 I_n$ for some
inner functions $\theta_1$ and $\delta_1$. \
Then
\begin{equation}\label{5.3}
\theta I_n=\Omega\Theta_1\quad\hbox{and}\quad \Delta\Delta_0=\delta
I_n=\Omega\Delta_1.
\end{equation}
If $\delta=\omega$ then $\delta I_n$ is an inner divisor of $\theta
I_n$, \ so that, evidently, $\Theta$ and $\Delta$ are not right coprime. \
We now suppose $\delta\ne \omega$. \
We then claim that
\begin{equation}\label{5.4}
\hbox{$\Delta$ and $\Omega$ are not right coprime.}
\end{equation}
For (\ref{5.4}), we assume to the contrary that $\Delta$ and
$\Omega$ are right coprime. \
Since by (\ref{5.3}),
$$
\delta_1 I_n=\Delta\Delta_0 \overline\omega
I_n=\Delta(\overline\omega I_n \Delta_0),
$$
it follows that $\omega I_n$ is an inner divisor of $\Delta_0$, so
that $\overline\omega I_n \Delta_0$ is inner. \
Consequently,
$$
(\delta\overline\omega)I_n=\delta_1 I_n=\Delta(\overline\omega I_n
\Delta_0),
$$
which contradicts to the definition of $D(\Delta)$. \
This proves (\ref{5.4}). \
But since $\Theta=\Omega\Theta_1=\Theta_1\Omega$, it
follows that $\Theta$ and $\Delta$ are not right coprime.

\medskip

(a) $\Leftrightarrow$ (c). Since
$\widetilde{D(\Delta)}=D(\widetilde\Delta)$, it follows from the
equivalence (b) $\Leftrightarrow$ (c) that
$$
\begin{aligned}
\hbox{$\Theta$ and $D(\Delta)$ are coprime}\
&\Longleftrightarrow\ \hbox{$\widetilde \Theta$ and $\widetilde{D(\Delta)}$ are coprime}\\
&\Longleftrightarrow\ \hbox{$\widetilde \Theta$ and $\widetilde \Delta$ are right coprime}\\
&\Longleftrightarrow\ \hbox{$\Theta$ and $\Delta$ are left coprime}.
\end{aligned}
$$
This completes the proof.
\end{proof}

\bigskip

\begin{lem}\label{lem5.3}
Let $A \in H_{M_n}^2$ be such that $\hbox{det}\, A$ is not
identically zero and $\Theta:=\theta I_n$ for some inner function $\theta$. \
Then the following are equivalent:
\medskip

{\rm (a)} $\Theta$ and $A$ are left coprime;

{\rm (b)} $\Theta$ and $A$ are right coprime.
\end{lem}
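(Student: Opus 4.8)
The plan is to show that each of the two coprimeness conditions is equivalent to a single \emph{scalar} statement, namely that $\theta$ and the inner part of $\det A$ are coprime; once both (a) and (b) are pinned to the same scalar statement their equivalence is immediate. The engine for passing from matrix coprimeness to a scalar condition is Lemma \ref{lem5.2}, which already does this when the second argument is inner, together with the device $D(\cdot)$ of Definition \ref{def5.1}. So the first task is to replace the (possibly non-inner) function $A$ by an inner function to which Lemma \ref{lem5.2} applies.

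To do this I would use the Inner--Outer Factorization: write $A=A^iA^e$ with $A^i$ inner and $A^e$ outer. Since $\det A\not\equiv 0$, both factors are square $n\times n$ and $\det A^i$ is a scalar inner function. The key reduction is that the common left inner divisors of $\Theta$ and $A$ are exactly the common left inner divisors of $\Theta$ and $A^i$: if $A=\Delta Y$ with $\Delta$ inner, I factor $Y=Y^iY^e$, observe that $A=(\Delta Y^i)Y^e$ is again an inner--outer factorization, and invoke uniqueness to conclude that $\Delta$ left-divides $A^i$; the reverse inclusion is trivial. Hence $\Theta$ and $A$ are left coprime if and only if $\Theta$ and $A^i$ are left coprime, and by Lemma \ref{lem5.2} this holds if and only if $\Theta$ and $D(A^i)$ are coprime.

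It remains to express ``$\Theta$ and $D(A^i)$ are coprime'' through $\det A$. Writing $D(A^i)=\delta I_n$, I would establish two divisibilities: first $\delta\mid \det A^i$, because the identity $A^i\,\mathrm{adj}(A^i)=(\det A^i)I_n$ exhibits $A^i$ as a left inner divisor of $(\det A^i)I_n$, so $D(A^i)$ must divide $(\det A^i)I_n$; and second $\det A^i\mid \delta^{\,n}$, obtained by taking determinants in $\delta I_n=A^iX$. These two relations force $\delta$ and $\det A^i$ to have exactly the same inner (prime) factors, so for the fixed inner function $\theta$ it follows that $\theta$ and $\delta$ are coprime if and only if $\theta$ and $\det A^i$ are coprime. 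Since $\det A^e$ is outer (the determinant of an outer matrix function is outer), $\det A^i$ is precisely the inner part $d$ of $\det A$. Thus (a) is equivalent to ``$\theta$ and $d$ are coprime.''

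For (b) I would run the same argument on $\widetilde A$. By definition $\Theta$ and $A$ are right coprime if and only if $\widetilde\Theta=\widetilde\theta I_n$ and $\widetilde A$ are left coprime; applying the previous two paragraphs to $\widetilde A$, whose left inner factor has determinant equal to the inner part of $\det\widetilde A=\widetilde{\det A}$, i.e. to $\widetilde d$, shows that (b) is equivalent to ``$\widetilde\theta$ and $\widetilde d$ are coprime.'' Since the tilde operation is an involution that preserves inner factors and hence coprimeness, this is the same as ``$\theta$ and $d$ are coprime,'' and comparing with the third paragraph gives (a) $\Leftrightarrow$ (b). The main obstacle I anticipate is the reduction step: one must be careful that the Inner--Outer Factorization and its uniqueness are applied to square strong $H^2$-functions with nonvanishing determinant, so that all inner factors remain square and the determinant bookkeeping (degrees and divisibilities) is legitimate. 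The radical argument of the third paragraph is then the crux, since it is what converts the abstract object $D(A^i)$ into the concrete scalar invariant $\det A^i=d$.
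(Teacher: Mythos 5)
Your argument is correct, but it diverges from the paper's proof at the decisive step, so a comparison is worthwhile. Both proofs begin the same way: reduce to the square inner factors via the Inner--Outer Factorization (legitimate here because $\det A\not\equiv 0$ forces all inner divisors to be square), and use Lemma \ref{lem5.2} together with $D(\cdot)$ to convert matrix coprimeness into coprimeness with a diagonal-constant inner function. The paper, however, takes \emph{both} factorizations $A=A^iA^e=B^eB^i$ and proves directly that $D(A^i)=D(B^i)$, by the computation $D(B^i)H^2_{\mathbb C^n}=\hbox{cl}\,B^eB^i\Delta_0\mathcal P_{\mathbb C^n}=\hbox{cl}\,A^iA^e\Delta_0\mathcal P_{\mathbb C^n}\subseteq A^iH^2_{\mathbb C^n}$, exploiting only the dense range of the outer factor; equality of the two $D$'s then closes the loop through Lemma \ref{lem5.2}(a)$\Leftrightarrow$(c) on one side and (b)$\Leftrightarrow$(c) on the other. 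You instead keep only the left factorization, pass to $\widetilde A$ for the right-coprimeness half, and identify the ``radical'' of $D(A^i)$ with that of $\det A^i$ via the adjugate identity $A^i\,\mathrm{adj}(A^i)=(\det A^i)I_n$ and the sandwich $\delta\mid\det A^i\mid\delta^n$ (the latter step is fine for general inner functions: an inner function coprime with $\delta$ is coprime with $\delta^n$, by the Blaschke/singular decomposition). Your route buys an explicit scalar criterion --- $\Theta$ and $A$ are (left or right) coprime iff $\theta$ is coprime with the inner part of $\det A$ --- which is more concrete than the paper's statement and is in the spirit of Lemma \ref{lem5.5}(c); the price is that you must import the standard but nontrivial fact that the determinant of a square outer matrix function is outer (and, implicitly, that the tilde operation preserves outerness), which you correctly flag. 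The paper's argument avoids any determinant bookkeeping and needs nothing beyond the density defining outerness, so it is the more self-contained of the two; both are sound.
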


\begin{proof}
Since  $\hbox{det}\, A$ is not identically zero, the left and the
right inner divisors of $A$ are square. \
Thus we have the following
inner-outer factorizations of $A$ of the form
$$
A=A^i\, A^e=B^e\, B^i\,,
$$
where  $A^i, B^i\in H^2_{M_n}$ are inner and $A^e, B^e\in H^2_{M_n}$ are outer. \
We will show that
\begin{equation}\label{5.5}
D(A^i)=D(B^i).
\end{equation}
Write
$$
D(B^i)=B^i\Delta_0\quad\hbox{for some inner function $\Delta_0$}.
$$
Then we have
$$
\begin{aligned}
D(B^i)H^2_{\mathbb C^n} &=D(B^i)\Bigl[\hbox{cl}\, B^e
\mathcal{P}_{\mathbb C^n}\Bigr]
   =\hbox{cl}\, B^e \Bigl[D(B^i) \mathcal{P}_{\mathbb C^n}\Bigr]
      =\hbox{cl}\, B^e B^i \Bigl[\Delta_0 \mathcal{P}_{\mathbb C^n}\Bigr]\\
&=\hbox{cl}\, A^i A^e \Bigl[\Delta_0 \mathcal{P}_{\mathbb C^n}\Bigr]
     =A^i \Bigl[ \hbox{cl}\, A^e \Delta_0 \mathcal{P}_{\mathbb C^n}\Bigr]
       \subseteq A^i H^2_{\mathbb C^n},
\end{aligned}
$$
which proves that $D(B^i)H_{\mathbb C^n}^2 \subseteq  A^i H_{\mathbb
C^n}^2$, so that $A^i$ is a left inner divisor of $D(B^i)$. \
Thus by
the definition of $D(A^i)$, $D(A^i)$ is a (left) inner divisor of
$D(B^i)$. \
Similarly, we can show that $D(B^i)$ is an inner divisor
of $D(A^i)$, and hence $D(A^i)=D(B^i)$. \
Thus by Lemma \ref{lem5.2},
we have
$$
\begin{aligned}
\Theta \ \hbox{and} \ A  \ \hbox{are left coprime}
&\Longleftrightarrow \Theta \ \hbox{and} \ D(A^i)  \ \hbox{are coprime}\\
&\Longleftrightarrow \Theta \ \hbox{and} \ D(B^i)  \ \hbox{are coprime}\\
&\Longleftrightarrow\Theta \ \hbox{and} \ A  \ \hbox{are right
coprime}.
\end{aligned}
$$
\end{proof}

In Lemma \ref{lem5.3}, if $\theta$ is given as a finite Blaschke
product then the ``determinant" assumption may be dropped.

\medskip

\begin{lem}\label{lem5.5}
Let $A \in H_{M_n}^2$ and $\Theta:=\theta I_n$ for a finite Blaschke
product $\theta$. \
Then the following are equivalent:
\medskip

{\rm (a)} $\Theta$ and $A$ are left coprime;

{\rm (b)} $\Theta$ and $A$ are right coprime;

{\rm (c)} $A(\alpha)$ is invertible for each zero $\alpha$ of $\theta$.
\end{lem}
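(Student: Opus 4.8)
The plan is to establish the equivalences by proving $(\mathrm a)\Leftrightarrow(\mathrm c)$ directly and then deducing $(\mathrm b)\Leftrightarrow(\mathrm c)$ by passing to the tilde transform; together these yield $(\mathrm a)\Leftrightarrow(\mathrm b)$ at once. Throughout I may assume $\theta$ is nonconstant, for if $\theta$ is a unimodular constant then $\Theta$ is a unitary constant, $(\mathrm a)$ and $(\mathrm b)$ hold trivially, and $(\mathrm c)$ is vacuous. It is worth noting that this lemma genuinely goes beyond Lemma~\ref{lem5.3} precisely because no hypothesis is placed on $\det A$; in particular the case $\det A\equiv 0$ must be handled, and there every $A(\alpha)$ is singular, so $(\mathrm c)$ fails and one must exhibit a nontrivial common divisor.

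For $(\mathrm c)\Rightarrow(\mathrm a)$ I argue by contraposition. Suppose $\Theta$ and $A$ are \emph{not} left coprime, and let $\Delta$ be a nonconstant common left inner divisor. Since $\Delta$ is a left inner divisor of $\Theta=\theta I_n$ and $\det(\theta I_n)=\theta^n\not\equiv 0$, the divisor $\Delta$ is square (by the remark that a left inner divisor of a matrix function with non-vanishing determinant is square); write $\theta I_n=\Delta C$ and $A=\Delta D$ with $C,D\in H^2_{M_n}$. A nonconstant square inner function has nonconstant determinant (if $\det\Delta$ were a unimodular constant, then $\Delta^{-1}=(\det\Delta)^{-1}\operatorname{adj}\Delta\in H^\infty_{M_n}$ would force $\Delta$ to be constant), and taking determinants in $\theta I_n=\Delta C$ shows the scalar inner function $\det\Delta$ divides $\theta^n$; hence $\det\Delta$ vanishes at some zero $\alpha$ of $\theta$. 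Then $\Delta(\alpha)$ is singular, and $A(\alpha)=\Delta(\alpha)D(\alpha)$ is singular as well, i.e.\ $(\mathrm c)$ fails.

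For $(\mathrm a)\Rightarrow(\mathrm c)$ I again use contraposition, and this is the technical heart: assuming $A(\alpha)$ is singular for some zero $\alpha$ of $\theta$, I must manufacture a nonconstant common left inner divisor of $\Theta$ and $A$. Choose a unit vector $v$ with $v^*A(\alpha)=0$ (possible since $A(\alpha)^*$ is singular) and form the elementary Blaschke--Potapov factor
$$
\Delta:=(I_n-vv^*)+b_\alpha\,vv^*,
$$
which is inner because $vv^*$ and $I_n-vv^*$ are complementary orthogonal projections and $|b_\alpha|=1$ on $\mathbb T$. Writing $\theta=b_\alpha\theta_1$, a direct computation with $\Delta^{-1}=(I_n-vv^*)+b_\alpha^{-1}vv^*$ gives $\Delta^{-1}\theta I_n=\theta(I_n-vv^*)+\theta_1\,vv^*\in H^\infty_{M_n}$, so $\Delta$ is a left inner divisor of $\Theta$; and since each entry of the row function $v^*A\in H^2_{M_{1\times n}}$ vanishes at $\alpha$, it is divisible by $b_\alpha$, whence $\Delta^{-1}A=(I_n-vv^*)A+b_\alpha^{-1}v(v^*A)\in H^2_{M_n}$, so $\Delta$ is also a left inner divisor of $A$. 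As $\Delta$ is nonconstant, $\Theta$ and $A$ are not left coprime, giving $(\mathrm a)\Rightarrow(\mathrm c)$.

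Finally, $(\mathrm b)\Leftrightarrow(\mathrm c)$ follows by symmetry. By definition $\Theta$ and $A$ are right coprime iff $\widetilde\Theta$ and $\widetilde A$ are left coprime; here $\widetilde\Theta=\widetilde\theta\,I_n$ with $\widetilde\theta$ again a finite Blaschke product whose zeros are the conjugates $\overline\alpha$ of the zeros $\alpha$ of $\theta$, and $\widetilde A(\overline\alpha)=A(\alpha)^*$. Applying the already-proved equivalence $(\mathrm a)\Leftrightarrow(\mathrm c)$ to the pair $(\widetilde\Theta,\widetilde A)$ shows that $\widetilde\Theta$ and $\widetilde A$ are left coprime iff $A(\alpha)^*$ is invertible for every zero $\alpha$ of $\theta$, which is exactly $(\mathrm c)$. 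The main obstacle is the construction in $(\mathrm a)\Rightarrow(\mathrm c)$: one must select the null vector on the correct (left) side and verify that \emph{both} quotients $\Delta^{-1}\Theta$ and $\Delta^{-1}A$ stay analytic; the determinant bookkeeping in $(\mathrm c)\Rightarrow(\mathrm a)$ and the reduction to square divisors are the remaining points that need care.
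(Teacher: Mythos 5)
Your argument is correct, but there is nothing in the paper to compare it with line by line: the paper's entire ``proof'' of Lemma \ref{lem5.5} is the citation ``See [CHL2, Lemma 3.3]'', so what you have written is a self-contained replacement for an outsourced proof. Your route is the natural direct one and it holds up. For (c) $\Rightarrow$ (a), a nonconstant common left inner divisor $\Delta$ of $\theta I_n$ is square by the paper's remark on divisors of functions with non-vanishing determinant, $\det\Delta$ is a nonconstant scalar inner divisor of $\theta^n$, hence a finite Blaschke product vanishing at some zero $\alpha$ of $\theta$, and $A(\alpha)=\Delta(\alpha)D(\alpha)$ is then singular; this is sound, though a referee might ask you to spell out that $\det\Delta$ is inner and that a square inner function with constant determinant is constant (your parenthetical is the right idea: $\Delta^{-1}=\Delta^*$ a.e.\ on $\mathbb T$ lies in $H^\infty_{M_n}$, so every entry of $\Delta$ is simultaneously analytic and co-analytic). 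The heart of the matter, (a) $\Rightarrow$ (c) by exhibiting the Blaschke--Potapov factor $\Delta=(I_n-vv^*)+b_\alpha vv^*$ with $v^*A(\alpha)=0$, is exactly right: choosing the null vector of $A(\alpha)^*$ rather than of $A(\alpha)$ is what makes $\Delta$ a \emph{left} divisor, $\Delta^{-1}\Theta\in H^\infty_{M_n}$ because $b_\alpha$ divides $\theta$, and $\Delta^{-1}A\in H^2_{M_n}$ because each entry of the row $v^*A$ vanishes at $\alpha$ and is therefore divisible by $b_\alpha$ in $H^2$. The reduction of (b) to (c) via $\widetilde\Theta=\widetilde\theta\,I_n$ and $\widetilde A(\overline\alpha)=A(\alpha)^*$ is also correct. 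Note that your proof is independent of the $D(\Delta)$ machinery of Lemmas \ref{lem5.1}--\ref{lem5.3}, which the paper needs for general inner $\theta$ where no pointwise criterion such as (c) is available; what the finiteness of the Blaschke product buys you is precisely the concrete test (c), and your argument exploits it at both ends.
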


\begin{proof}
See [CHL2, Lemma 3.3].
\end{proof}

\begin{lem}\label{lem5.6}
Let $A \in H_{M_n}^{2}$ and $\Theta$ be a diagonal inner function
whose diagonal entries are nonconstant. \
If $Af=0$ for each $f\in\mathcal{H}_\Theta$, then $A=0$.
\end{lem}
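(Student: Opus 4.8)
The plan is to exploit the diagonal structure of $\Theta$ in order to reduce the matrix statement to a scalar one, handled column by column. Write $\Theta=\mathrm{diag}(\theta_1,\dots,\theta_n)$, where each $\theta_j$ is a nonconstant inner function. Since $\Theta$ is diagonal, $\Theta H^2_{\mathbb{C}^n}=\bigoplus_{j=1}^n \theta_j H^2$ (the $j$-th summand sitting in the $j$-th coordinate), and hence
$$
\mathcal{H}_\Theta=\bigoplus_{j=1}^n \mathcal{H}_{\theta_j},
$$
in the sense that a function $F\in H^2_{\mathbb{C}^n}$ with components $f_1,\dots,f_n$ lies in $\mathcal{H}_\Theta$ precisely when $f_j\in\mathcal{H}_{\theta_j}$ for each $j$. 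In particular, writing $e_j$ for the $j$-th standard basis vector of $\mathbb{C}^n$, we have $g\,e_j\in\mathcal{H}_\Theta$ for every $g\in\mathcal{H}_{\theta_j}$.

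Next I would isolate the columns of $A$. Fix $j$ and choose any nonzero $g_j\in\mathcal{H}_{\theta_j}$; such a $g_j$ exists because $\theta_j$ is nonconstant, so $\theta_j H^2$ is a proper subspace of $H^2$ and $\mathcal{H}_{\theta_j}\ne\{0\}$. Applying the hypothesis to $f=g_j e_j\in\mathcal{H}_\Theta$ yields
$$
0=A(g_j e_j)=g_j\,(A e_j),
$$
where $Ae_j$ denotes the $j$-th column of $A$. Thus $g_j(z)\,(Ae_j)(z)=0$ for almost every $z\in\mathbb{T}$.

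To conclude, I would invoke the standard fact that a nonzero $H^2$-function cannot vanish on a subset of $\mathbb{T}$ of positive measure (its boundary values are nonzero a.e.). Since $g_j$ is a nonzero scalar $H^2$-function, $g_j(z)\ne 0$ for a.e.\ $z$, and therefore $(Ae_j)(z)=0$ a.e., i.e.\ the $j$-th column of $A$ vanishes identically. As $j$ was arbitrary, every column of $A$ is zero, whence $A=0$.

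There is no serious obstacle here; the only points worth recording are the diagonal decomposition of $\mathcal{H}_\Theta$ and the a.e.\ nonvanishing of nonzero $H^2$-functions. It is worth emphasizing, however, exactly where the nonconstant-ness of the diagonal entries enters: it is precisely what makes each model space $\mathcal{H}_{\theta_j}$ nontrivial, so that a nonvanishing test function $g_j$ is available in every coordinate. Were some $\theta_j$ a unimodular constant, the corresponding summand of $\mathcal{H}_\Theta$ would reduce to $\{0\}$, supplying no information about the $j$-th column, and the conclusion would genuinely fail.
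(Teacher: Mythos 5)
Your proof is correct and follows essentially the same route as the paper's: both arguments test $A$ against vectors supported in a single coordinate of $\mathcal{H}_\Theta=\bigoplus_{j}\mathcal{H}_{\theta_j}$, reducing the claim to a scalar statement about each column. The only difference is the last step: the paper chooses $g_j$ to be an outer function in $\mathcal{H}_{\theta_j}$ that is invertible in $H^\infty$ (citing [CHL2, Lemma 3.4] for its existence) and cancels it algebraically, whereas you take an arbitrary nonzero $g_j\in\mathcal{H}_{\theta_j}$ and invoke the a.e.\ nonvanishing of boundary values of a nonzero $H^2$ function --- a slightly more elementary way to reach the same conclusion.
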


\begin{proof}
Write $A\equiv [a_{ij}]_{1\le i,j\le n}$ ($a_{ij}\in H^2$)
and $\Theta\equiv \hbox{diag}(\theta_1,\cdots, \theta_n)$
(where $\theta_j$ is a nonconstant inner function for each $j=1,\cdots,n$). \
Suppose $Af=0$ for each $f\in\mathcal{H}_\Theta$. \
Choose an outer function $h_j$ in $\mathcal{H}_{\theta_j}$
which is invertible in $H^\infty$ (cf. [CHL2, Lemma 3.4]). \
For each $j=1,\cdots, n$, define $g_j:=(0,\cdots,0,h_j,0,\cdots,0)^t$ (where $h_j$ is
the $j$-th component). Clearly, $g_j\in \mathcal{H}_{\Theta}$. \
Thus by assumption, $Ag_j=0$ for each $j$, so that
$a_{ij}h_j=0$ for each $i,j=1,\cdots,n$. \
But since $h_j$ is invertible, $a_{ij}=0$ for each $i,j=1,\cdots,n$,
i.e., $A=0$.
\end{proof}

\bigskip


We are now ready to prove the main result of this paper.
\smallskip

\begin{thm}\label{thm3.7} {\rm (Abrahamse's Theorem for matrix-valued symbols, Version I)}
Suppose $\Phi=\Phi_-^*+ \Phi_+\in L^\infty_{M_n}$ is such that
$\Phi$ and $\Phi^*$ are of bounded type and $\hbox{det}\, \Phi_+$
and $\hbox{det}\, \Phi_-$ are not identically zero. \
Then in view of (\ref{3.1}), we may write
$$
\Phi_- = B^* \Theta \quad\hbox{(left coprime factorization)}\,.
$$
Assume that $\Theta$ is a diagonal inner matrix function (which is not
necessarily diagonal-constant) and that $\Theta$ has a
nonconstant diagonal-constant inner divisor $\Omega\equiv \omega I_n$ ($\omega$ inner)
such that $\Omega$ and $\Theta\Omega^*$ are coprime. \
If
\medskip

{\rm (i)} $T_\Phi$ is hyponormal; and

{\rm (ii)} $\text{\rm ker}\,[T_{\Phi}^*, T_{\Phi}]$ is invariant
under $T_{\Phi}$\,,
\medskip

\noindent
then $T_{\Phi}$ is either normal or analytic. \
Hence, in
particular, if $T_\Phi$ is subnormal then it is either normal or analytic.
\end{thm}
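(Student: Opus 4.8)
The plan is to convert the whole statement into a rigidity assertion about the Cowen symbol $K$. First I would dispose of the analytic alternative: if $T_\Phi$ is not analytic then $\Phi_-\neq 0$, and Lemma \ref{lem1.2} already tells us that hyponormality forces $\Phi$ to be normal and supplies some $K\in\mathcal E(\Phi)$, i.e.\ $K\in H^\infty_{M_n}$ with $\|K\|_\infty\le 1$ and $\Phi-K\Phi^*\in H^\infty_{M_n}$. Since $\det\Phi_+$ is not identically zero, Lemma \ref{lem1.3} applies, so it suffices to show that $K$ may be taken to be a \emph{constant unitary} matrix $U$: for then $\Phi-U\Phi^*\in H^\infty_{M_n}$ with $U$ constant unitary is exactly the identity $\Phi_+-\Phi_+(0)=(\Phi_--\Phi_-(0))U$ of (\ref{1.3-1}), which yields normality. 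Thus the theorem reduces to: \emph{under hypotheses (i) and (ii) and the diagonal-constant divisor hypothesis, the Cowen symbol $K$ is a constant unitary.}

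Second, I would recast everything through Hankel operators. Using normality of $\Phi$ and the relations (\ref{1.5}) together with $H_\Phi=H_{\Phi_-^*}$ and $H_{\Phi^*}=H_{\Phi_+^*}$, one computes $[T_\Phi^*,T_\Phi]=H_{\Phi_+^*}^*H_{\Phi_+^*}-H_{\Phi_-^*}^*H_{\Phi_-^*}$. From $\Phi-K\Phi^*\in H^\infty_{M_n}$ and the intertwining relation (\ref{1.5-1}) one obtains $H_{\Phi_-^*}=T_{\widetilde K}^*H_{\Phi_+^*}$, whence $\|H_{\Phi_-^*}F\|\le\|H_{\Phi_+^*}F\|$ with equality precisely on $\operatorname{ker}[T_\Phi^*,T_\Phi]$; concretely $F\in\operatorname{ker}[T_\Phi^*,T_\Phi]$ iff $T_{\widetilde K}^*$ acts isometrically on $H_{\Phi_+^*}F$. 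I would then read hypothesis (ii) as saying that this ``$T_{\widetilde K}^*$ is isometric'' condition is stable under applying $T_\Phi$, so that by the Hankel--Toeplitz commutation relations the equality propagates from a seed subspace to a larger $T_\Phi$-invariant one.

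Third—where the new hypothesis does its work—I would feed in the diagonal-constant divisor $\Omega=\omega I_n$. The virtue of a diagonal-constant factor is that $\omega$ is a single scalar inner function common to all $n$ coordinates, so the model space $\mathcal H_\Omega=\mathcal H_\omega\otimes\mathbb C^n$ furnishes an ample family of structured test vectors on which the Hankel operators can be evaluated explicitly, exactly as in the scalar Abrahamse argument. Using Lemmas \ref{lem5.2}, \ref{lem5.3} and \ref{lem5.5}, the coprimality hypothesis that $\Omega$ and $\Theta\Omega^*$ be coprime lets me pass freely between left and right coprimeness and guarantees these test vectors are nondegenerate (the relevant inner factors do not cancel). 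Inserting them into the equality characterization of $\operatorname{ker}[T_\Phi^*,T_\Phi]$ and propagating via (ii), I would reach a matrix identity of the form $(\text{constant}-K)(\cdot)=0$ on all of $\mathcal H_\Theta$; Lemma \ref{lem5.6} then forces the coefficient to vanish, so $K$ is constant, and the isometry (equality) condition together with $\|K\|_\infty\le 1$ upgrades it to a unitary. Invoking Lemma \ref{lem1.3} finishes the proof.

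The main obstacle I anticipate is precisely the last passage, from ``equality on structured test vectors'' to ``$K$ constant in every direction.'' In the scalar case the Cowen symbol is a single ratio and the rigidity is one-dimensional, whereas here $K$ is a full matrix, left and right coprimeness genuinely diverge, and $\Theta$ is only diagonal rather than scalar; absent a diagonal-constant divisor there is no common scalar factor to anchor the test vectors and the coordinates cannot be decoupled. Ensuring that the propagation argument actually controls \emph{every} direction in $\mathbb C^n$—so that Lemma \ref{lem5.6} applies and not merely a proper subspace is pinned down—is the delicate heart of the matter, and it is exactly the coprimeness of $\Omega$ with $\Theta\Omega^*$ that rules out the degeneracy and makes the annihilation argument go through.
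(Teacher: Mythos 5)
Your plan reproduces the overall architecture of the paper's proof: reduce the theorem to a rigidity statement about the Cowen symbol $K$, write $[T_\Phi^*,T_\Phi]=H_{\Phi_+^*}^*(I-T_{\widetilde K}T_{\widetilde K}^*)H_{\Phi_+^*}$ as in (\ref{3.13-2}), use the diagonal-constant divisor $\Omega=\omega I_n$ to manufacture test vectors in $\mathcal H_{\widetilde\Omega}$, and close with Lemma \ref{lem5.6}. But two of the steps you gesture at are precisely where the proof's substance lies, and as written they are gaps. First, the only place hypothesis (ii) enters is in proving the inclusion $\Theta_0 H^2_{\mathbb C^n}\subseteq \hbox{ker}\,[T_\Phi^*,T_\Phi]$, where $\Phi_+=\Theta_0\Theta_2A^*$ with $\Theta_0\Theta_2=\theta I_n$ (the paper's STEP 1, itself deferred to the argument of [CHL2, Theorem 3.5]). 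Your phrase ``the equality propagates from a seed subspace to a larger $T_\Phi$-invariant one'' identifies neither the seed subspace nor the propagation mechanism; without this inclusion you never obtain the identity $(I-T_{\widetilde K}T_{\widetilde K}^*)H_{A\Theta_2^*}=0$ of (\ref{3.15}), which is the engine of everything that follows.

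Second, getting from (\ref{3.15}) to test vectors filling out $\mathcal H_{\widetilde\Omega}$ requires the inclusion $\mathcal H_{\widetilde\Omega}\subseteq \hbox{cl ran}\, H_{A\Theta_2^*}$, which in turn requires that $A$ and $\Omega$ be left coprime. That is a genuine two-stage argument (the paper's STEPS 3 and 4): one must first show that $\Omega$ and $\Theta_0$ are coprime --- this is where the hypothesis that $\Omega$ and $\Theta\Omega^*$ are coprime is actually consumed, together with the minimality of the representation (\ref{1.9}) and the $D(\Delta)$ machinery of Lemma \ref{lem5.2} --- and only then transfer coprimality from the right coprime factorization of $\Phi_+$ (Lemma \ref{lem3.1}(b)) to $A$ itself via the determinant hypotheses and Lemma \ref{lem5.3}. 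Your proposal cites the right lemmas but treats this as routine bookkeeping, which it is not. A further structural point: the paper must first prove $K$ is \emph{inner} (STEP 5, from $(I-\widetilde K\widetilde K^*)F=0$ on $\mathcal H_{\widetilde\Omega}$ and Lemma \ref{lem5.6}) before it can prove $K$ is \emph{constant} (STEP 6, from $\mathcal H_{\widetilde\Omega}\subseteq \hbox{ker}\,H_{\widetilde K^*}$, which only makes sense once $I-T_{\widetilde K}T_{\widetilde K}^*=H_{\widetilde K^*}^*H_{\widetilde K^*}$). Your single identity ``$(\hbox{constant}-K)(\cdot)=0$'' collapses these two logically ordered steps, and it is unclear how you would extract constancy without inner-ness already in hand. (Once $K$ is a constant unitary, incidentally, (\ref{3.13-2}) gives $[T_\Phi^*,T_\Phi]=0$ directly, so the detour through Lemma \ref{lem1.3} is unnecessary.)
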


\medskip

\begin{proof}
For notational convenience, we let $\Theta_2:=\Theta$. \
In view of Lemma \ref{lem3.1}(a), we may write
$$
\Phi_+=\Theta_0\Theta_2 A^*,
$$
where $\Theta_0\Theta_2=\theta I_n$ with an inner function $\theta$ and $A\in H^2_{M_n}$. \
If $\Theta_2$
is constant then $\Phi_-\in M_n$, so that $T_\Phi$ is analytic. \
Suppose
that $\Theta_2$ is nonconstant and $\Theta_2:=\Omega \Delta$, where
$\Omega\equiv \omega I_n$ with a nonconstant inner function $\omega$,
and $\Omega$ and $\Delta$ are coprime. \

We split the proof into six steps: each step is significant as a
separate mathematical statement. \


\bigskip

STEP 1:\ We first claim that
\begin{equation}\label{3.9}
\Theta_0 H^2_{\mathbb C^n}\ \subseteq\ \hbox{\rm ker}\,[T_{\Phi}^*,
T_{\Phi}].
\end{equation}
Indeed, the inclusion (\ref{3.9}) follows from a slight extension of [CHL2,
Theorem 3.5], in which $\Theta_2$ is a diagonal inner function of
the form $\Theta_2=\theta_2 I_n$. \
In fact, a careful analysis for the
proof of [CHL2, STEP 1 of the proof of Theorem 3.5] shows that the
proof does not employ the diagonal-{\it constant}-ness of
$\Theta_2$, but uses only the diagonal-constant-ness of
$\Theta_0\Theta_2$.


\bigskip

STEP 2: We also argue that if $K\in\mathcal E(\Phi)$, then
\begin{equation}\label{3.15}
\hbox{cl ran}\, H_{A\Theta_2^*}\subseteq
\hbox{ker}\,(I-T_{\widetilde{K}}T_{\widetilde{K}}^*).
\end{equation}
To see this, we observe that if $K\in\mathcal{E}(\Phi)$ then by
(\ref{1.5-1}),
\begin{equation}\label{3.13-2}
[T_\Phi^*,
T_\Phi]=H_{\Phi_+^*}^*H_{\Phi_+^*}-H_{K\Phi_+^*}^*H_{K\Phi_+^*}
=H_{\Phi_+^*}^* (I- T_{\widetilde K}T_{\widetilde K}^*)
H_{\Phi_+^*}\,,
\end{equation}
so that
$$
\hbox{ker}\,[T_\Phi^*, T_\Phi]=\hbox{ker}\,(I- T_{\widetilde
K}T_{\widetilde K}^*) H_{\Phi_+^*}.
$$
Thus by (\ref{3.9}),
$$
\{0\}=(I-T_{\widetilde{K}}T_{\widetilde{K}}^*)H_{A\Theta_2^*\Theta_0^*}(\Theta_0
H_{\mathbb C^n}^2) =
(I-T_{\widetilde{K}}T_{\widetilde{K}}^*)H_{A\Theta_2^*}(H_{\mathbb
C^n}^2)\,,
$$
giving (\ref{3.15}).

\medskip

We note that STEP 1 and STEP 2 hold with no restriction on $\Theta\equiv \Theta_2$.


\bigskip

STEP 3: We claim that
\begin{equation}\label{3.16-8}
\hbox{$\Omega$ and $\Theta_0$ are (right) coprime.}
\end{equation}
To see this we assume to the contrary that $\Omega$ and $\Theta_0$
are not coprime. \
Since $\Theta_2\Theta_0\equiv \Omega \Delta\Theta_0$ and $\Omega$ are diagonal-constant,
it follows that $\Delta\Theta_0$ is diagonal-constant. \
Also there exists an inner function $\Delta'$ such that
$\Delta\Delta'=D(\Delta)$. \
Thus we can write
$$
\Theta_2\Theta_0=\Omega \Delta \Theta_0=\Omega \Delta \Delta'\Gamma
$$
where $\Gamma:=\gamma I_n$ for some
inner function $\gamma$. \
Since by assumption, $\Omega$ and $\Delta$ are coprime it
follows from Lemma 3.4 that $\Omega$ and $\Delta\Delta'$ are
coprime. \
Therefore  we have
$$
\Omega':= \hbox{GCD}\,\{\Omega,\ \Theta_0\}\equiv  \omega' I_n,
$$
where $\omega'=\hbox{GCD}\{\omega, \gamma\}$ is not constant. \
Thus we can write
\begin{equation}\label{3.16-6}
\Theta_0=\Omega'
\Theta_0^{\prime}=\Theta_0^{\prime}\Omega'\quad\hbox{and}\quad
\Theta_2=\Omega' \Theta_2^{\prime}=\Theta_2^{\prime}\Omega'
\end{equation}
for some diagonal inner functions
$\Theta_0^{\prime},\Theta_2^{\prime}$. \
Then since
$\Theta_2\Theta_0^{\prime}=\Omega'\Theta_2^{\prime}\Theta_0^{\prime}=
\Omega'\Theta_0^{\prime}\Theta_2^{\prime}=\Theta_0\Theta_2^{\prime}$,
it follows from (\ref{3.9}) that
\begin{equation}\label{3.16-16}
\Theta_2\Theta_0^{\prime} H_{\mathbb C^n}^2\subseteq \Theta_0
H_{\mathbb C^n}^2\subseteq \hbox{\rm ker}\,[T_{\Phi}^*, T_{\Phi}].
\end{equation}
Note that
$$
\Theta_2 \Theta_0'=\Omega \Delta \Theta_0'=\Omega \Delta
\Delta'  (\gamma \overline{\omega'})I_n \quad
(\gamma\overline{\omega'}\in H^2).
$$
Thus since $\Theta_2\Theta_0'$ is diagonal-constant and hence,
$\Theta_2^*B\Theta_2\Theta_0'\in H^2_{M_n}$, it follows that
$$
H_{\Phi_-^*}(\Theta_2 \Theta_0'H_{\mathbb C^n}^2)
=H_{\Theta_2^*B}(\Theta_2 \Theta_0'H_{\mathbb C^n}^2)=0\,.
$$
Thus by (\ref{3.16-16}), we have
$$
H_{\Phi_+^*}(\Theta_2 \Theta_0'H_{\mathbb C^n}^2)
= H_{A\Theta_2^*\Theta_0^*} (\Theta_2\Theta_0^{\prime} H_{\mathbb C^n}^2)
=\{0\},\quad \hbox{so that}\ \ H_{A\Omega'^*}(H_{\mathbb
C^n}^2)=\{0\}.
$$
Thus we must have that $G\equiv A\Omega'^* \in H_{M_n}^2$. \
Then we can write
$$
\Phi_+=\Theta_0\Theta_2 A^*=\Omega'\Theta_0^\prime\Theta_2^\prime
\Omega' A^* =\Omega'\Theta_0^\prime\Theta_2^\prime G^*,
$$
which leads to a contradiction because the representation
$\Phi_+=\Theta_0\Theta_2 A^*$ is in ``minimal" form in view of
(\ref{1.9}). \
This proves (\ref{3.16-8}).


\bigskip

STEP 4:\ We claim that
\begin{equation}\label{3.8-11}
\hbox{$A$ and $\Omega$ are left coprime.}
\end{equation}
Indeed, by assumption $B$ and $\Theta_2$ are left coprime, so we can see
that $B$ and $\Omega$ are left coprime. \
Since $\hbox{det}\,\Phi_-$ is not identically zero and hence,
$\hbox{det}\,B$ is not either, it follows from Lemma \ref{lem5.3} that
$B$ and $\Omega$ are right coprime. \
Thus by Lemma \ref{lem3.1}(b), we can write
$$
\Phi_+ = \Theta_0 \Theta_2 A^* =\Omega\Delta_1 A_r^*\,,
$$
where $A_r$ and $\Omega \Delta_1$ are right coprime. \
In particular,
since by (\ref{3.16-8}), $\Omega$ and $\Theta_0$ are right coprime,
$A$ and $\Omega$ are
right coprime. \
Since by assumption, $\hbox{det}\,\Phi_+$ is not
identically zero and hence, $\hbox{det}\,A$ is not either, it
follows again from Lemma \ref{lem5.3} that
$A$ and $\Omega$ are left coprime. \
This proves (\ref{3.8-11}).


\bigskip

STEP 5: We now claim that
\begin{equation}\label{3.13-1}
\hbox{$\mathcal{E}(\Phi)$ contains an inner function $K$.}
\end{equation}
We first observe that $A\Theta_2^*=A\Omega^*\Delta^*=\Omega^* A \Delta^*$,
so that
$$
\hbox{cl ran}\, H_{A\Theta_2^*}=\Bigl(\hbox{ker}\, H_{\widetilde
\Delta^* \widetilde A\widetilde \Omega^*}\Bigr)^{\perp}\,.
$$
Since by (\ref{3.8-11}), $A$ and $\Omega$ are left coprime (so that
$\widetilde A$ and $\widetilde \Omega$ are
right coprime), it follows that
$$
\aligned f \in \hbox{ker}\, H_{\widetilde \Delta^*\widetilde A
\widetilde \Omega^*} &\Longrightarrow
\widetilde \Delta^* \widetilde A \widetilde \Omega^* f \in H_{\mathbb C^n}^2\\
&\Longrightarrow \widetilde A \widetilde \Omega^* f \in
\widetilde{\Delta}H_{\mathbb
C^n}^2 \subseteq H_{\mathbb C^n}^2 \\
&\Longrightarrow f \in \hbox{ker}\, H_{\widetilde A \widetilde \Omega^*}\\
&\Longrightarrow f \in \widetilde{\Omega}H_{\mathbb C^n}^2\quad
\hbox{(by Lemma \ref{lem1.1})},
\endaligned
$$
which implies that
$$
\hbox{ker}\, H_{\widetilde \Delta^* \widetilde A\widetilde \Omega^*}
\subseteq \widetilde{\Omega}H_{\mathbb C^n}^2\,,
$$
and hence,
$$
\mathcal{H}_{\widetilde\Omega} \subseteq \hbox{cl ran}\,
H_{A\Theta_2^*}\,.
$$
Thus by (\ref{3.15}),
\begin{equation}\label{3.15-7}
\mathcal{H}_{\widetilde\Omega} \subseteq
\hbox{ker}\,(I-T_{\widetilde{K}}T_{\widetilde{K}}^*).
\end{equation}
We thus have
\begin{equation}\label{3.14-2}
F=T_{\widetilde{K}}T_{\widetilde{K}}^* F\quad\hbox{for each
$F\in\mathcal H_{\widetilde{\Omega}}$}\,.
\end{equation}
But since $||\widetilde K||_\infty=||K||_\infty \le 1$,
it follows from a direct calculation that
$$
||P_n(\widetilde K^*F)||_2=||\widetilde K^*F||_2,
$$
which implies $\widetilde K^*F\in H^2_{\mathbb C^n}$. \
Therefore by
(\ref{3.14-2}),
$(I-\widetilde{K}\widetilde{K}^*)F=0$ for each $F\in\mathcal
H_{\widetilde{\Omega}}$. \
Thus by Lemma \ref{lem5.6}, $K^*K=I$, which proves (\ref{3.13-1}).


\bigskip

STEP 6:
We finally claim that
$$
\hbox{$T_\Phi$ is normal.}
$$
To see this, we first observe that if $K\in\mathcal{E}(\Phi)$ is inner
then it follows from (\ref{3.15-7}) that
\begin{equation}\label{3.14-3}
\mathcal{H}_{\widetilde \Omega} \subseteq
\hbox{ker}\,(I-T_{\widetilde{K}}T_{\widetilde{K}}^*) =
\hbox{ker}\,H_{\widetilde{K}^*}^* H_{\widetilde K^*}=
\hbox{ker}\,H_{\widetilde K^*}.
\end{equation}
Write $K:=[k_{ij}]_{1\le i,j\le n}\in H^\infty_{M_n}$. \
It thus follows that for each $i,j=1,2,\cdots,n$,
$$
k_{ij}(\overline z) h\in H^2\quad\hbox{for an invertible function $h\in \mathcal{H}_{\widetilde\omega}$}.
$$
Therefore each $k_{ij}$ is constant and
hence, $K$ is constant. \
Therefore by (\ref{3.13-2}), $[T_\Phi^*,
T_\Phi]=0$, i.e., $T_\Phi$ is normal. This completes the proof.
\end{proof}

\bigskip

In Theorem \ref{thm3.7}, if $\Theta$ has a nonconstant diagonal-constant
inner divisor of the form $\omega I_n$ with a Blaschke factor
$\omega$, then we can strengthen Theorem \ref{thm3.7} by dropping
the ``determinant" assumption.

\begin{cor}\label{cor3.8}
Suppose $\Phi=\Phi_-^*+ \Phi_+\in L^\infty_{M_n}$ is such that
$\Phi$ and $\Phi^*$ are of bounded type. \
Then in view of (\ref{3.1}), we may write
$$
\Phi_- = B^* \Theta \quad\hbox{(left coprime factorization)}\,,
$$
where $\Theta$ is a diagonal inner matrix function. \
Assume that $\Theta$ has a
nonconstant diagonal-constant inner divisor $\Omega\equiv \omega I_n$ with a finite Blaschke product $\omega$
such that $\Omega$ and $\Theta\Omega^*$ are coprime. \
If
\medskip

{\rm (i)} $T_\Phi$ is hyponormal; and

{\rm (ii)} $\text{\rm ker}\,[T_{\Phi}^*, T_{\Phi}]$ is invariant
under $T_{\Phi}$\,,
\medskip

\noindent then $T_{\Phi}$ is either normal or analytic. \
Hence, in
particular, if $T_\Phi$ is subnormal then it is either normal or analytic.
\end{cor}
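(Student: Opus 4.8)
The plan is to follow the six-step proof of Theorem~\ref{thm3.7} essentially verbatim, on the grounds that the sole function of the ``determinant'' hypothesis there is to convert left coprimeness into right coprimeness (and conversely) via Lemma~\ref{lem5.3}, and that this conversion is available \emph{without} any determinant assumption as soon as $\omega$ is a finite Blaschke product, by Lemma~\ref{lem5.5}. Concretely, I would first retain the opening reductions: if $\Theta$ is constant then $\Phi_-$ is a constant matrix and $T_\Phi$ is analytic; otherwise I write $\Theta=\Omega\Delta$ with $\Omega=\omega I_n$ and $\Omega,\Delta$ coprime (this is the hypothesis that $\Omega$ and $\Theta\Omega^*$ are coprime), and record $\Phi_+=\Theta_0\Theta_2 A^*$ with $\Theta_0\Theta_2=\theta I_n$ as furnished by Lemma~\ref{lem3.1}(a).

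Next I would run STEP~1 and STEP~2 unchanged. These establish $\Theta_0 H^2_{\mathbb C^n}\subseteq \hbox{ker}\,[T_\Phi^*,T_\Phi]$ and, for $K\in\mathcal E(\Phi)$, the inclusion $\hbox{cl ran}\, H_{A\Theta_2^*}\subseteq \hbox{ker}\,(I-T_{\widetilde K}T_{\widetilde K}^*)$; as the proof of Theorem~\ref{thm3.7} explicitly notes, both hold for arbitrary $\Theta\equiv\Theta_2$ and make no use of nonvanishing determinants. Likewise STEP~3, which yields that $\Omega$ and $\Theta_0$ are right coprime, is a purely inner-divisor argument (using the coprimeness of $\Omega$ and $\Delta$, the object $D(\Delta)$, Lemma~\ref{lem5.2}, and the minimality of the representation~(\ref{1.9})); it therefore carries over without change.

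The one place requiring modification is STEP~4, the proof that $A$ and $\Omega$ are left coprime. There the original argument invokes Lemma~\ref{lem5.3} twice: once to pass from ``$B$ and $\Omega$ left coprime'' to ``$B$ and $\Omega$ right coprime'' (using $\det\Phi_-\not\equiv 0$, hence $\det B\not\equiv 0$), and once to pass from ``$A$ and $\Omega$ right coprime'' back to ``$A$ and $\Omega$ left coprime'' (using $\det\Phi_+\not\equiv 0$, hence $\det A\not\equiv 0$). My plan is to replace each of these by Lemma~\ref{lem5.5}: since $\Omega=\omega I_n$ with $\omega$ a finite Blaschke product, the equivalence of left and right coprimeness with an arbitrary function in $H^2_{M_n}$ holds with no determinant requirement, so both conversions go through for general $B$ and $A$. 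The intermediate derivation that ``$A$ and $\Omega$ are right coprime''---obtained from the right coprime factorization in Lemma~\ref{lem3.1}(b) together with the right coprimeness of $\Omega$ and $\Theta_0$ from STEP~3---involves only inner divisors and needs no determinant hypothesis. Finally STEP~5 (producing an inner $K\in\mathcal E(\Phi)$, via Lemma~\ref{lem5.6}) and STEP~6 (showing that such a $K$ must be constant, whence $[T_\Phi^*,T_\Phi]=0$) are unchanged, giving normality; combined with the constant case they yield ``normal or analytic,'' and in particular subnormal $\Rightarrow$ normal or analytic.

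The main obstacle is thus not a new idea but a careful audit: I must confirm that $\det\Phi_+\not\equiv 0$ and $\det\Phi_-\not\equiv 0$ are genuinely used nowhere outside the two coprimeness conversions in STEP~4---in particular that the existence of the coprime factorizations in Lemma~\ref{lem3.1}, the divisor bookkeeping in STEP~3, and the first half of STEP~4 (deducing that $\Omega$ left-divides the inner part of $\Phi_+$, and then that $A$ and $\Omega$ are right coprime) never tacitly assume invertibility of a symbol. Granting this audit, Lemma~\ref{lem5.5} substitutes cleanly for Lemma~\ref{lem5.3} precisely because $\omega$ being a finite Blaschke product is exactly the hypothesis of Lemma~\ref{lem5.5}, and the conclusion follows as in Theorem~\ref{thm3.7}.
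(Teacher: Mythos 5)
Your proposal is correct and follows exactly the paper's own route: the paper proves Corollary~\ref{cor3.8} by observing that the determinant hypotheses in Theorem~\ref{thm3.7} are used only for the two left/right coprimeness conversions in STEP~4, and that Lemma~\ref{lem5.5} (applicable since $\omega$ is a finite Blaschke product) replaces Lemma~\ref{lem5.3} there without any determinant requirement. Your audit of the remaining steps matches the remarks already recorded in the proof of Theorem~\ref{thm3.7}, so nothing further is needed.
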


\begin{proof}
If we put $\Omega:=\omega I_n$ with a finite Blaschke product $\omega$, then
we may use Lemma \ref{lem5.5} in place of Lemma \ref{lem5.3}. \
Thus
we can drop the ``determinant" condition in Theorem \ref{thm3.7}
because Theorem \ref{thm3.7} employs the determinant condition only for
the equivalence of the left coprime-ness and the right coprime-ness
between $\Omega$ and some $D\in H^2_{M_n}$.
\end{proof}

\medskip

In Corollary \ref{cor3.8}, if $\Theta$ is diagonal-constant then
we may take $\Theta=\Omega$, and hence $\Theta\Omega^*=I$, so that
$\Theta$ and $\Theta\Omega^*$ are trivially coprime. \
Thus if $\Theta$ is
diagonal-constant then Corollary \ref{cor3.8} reduces to Theorem \ref{thm2.1}.

\begin{ex} (Example \ref{ex2.2} Revisited)\quad
We take a chance to reconsider the function given in (\ref{2.2}):
$$
\Phi:=\begin{bmatrix} \overline\theta & \overline{\theta}\overline b_\alpha+c
\theta b_\beta\\ \overline b_\beta + c \theta^2b_\alpha &
\overline\theta\end{bmatrix}\,, \quad\hbox{where $c\in\mathbb R$
with $c\ge \left|\left| \begin{bmatrix}
                       \theta& b_\beta\\ \theta b_\alpha&\theta\end{bmatrix}\right|\right|_\infty$}\,.
$$
In Section 2 we have shown that $T_\Phi$ is hyponormal, but not normal. \
On the other hand, we know that
$$
\Phi_-= B^*\Theta=\begin{bmatrix} b_\alpha &1\\ \theta&
b_\beta\end{bmatrix}^*
                       \begin{bmatrix} \theta b_\alpha &0\\ 0&\theta b_\beta\end{bmatrix}
                         \quad\hbox{(left coprime factorization)}\,,
$$
But since
$$
\Theta=\begin{bmatrix} \theta b_\alpha& 0\\ 0& \theta b_\beta\end{bmatrix}
=\begin{bmatrix} \theta & 0\\ 0& \theta\end{bmatrix}\,
\begin{bmatrix} b_\alpha& 0\\ 0& b_\beta\end{bmatrix}
$$
and
$\begin{bmatrix} \theta & 0\\ 0& \theta\end{bmatrix}$ and
$\begin{bmatrix} b_\alpha& 0\\ 0& b_\beta\end{bmatrix}$ are coprime
(since $\theta$ is coprime with $b_\alpha$ and $b_\beta$),
it follows from Theorem \ref{thm3.7} that
$T_\Phi$ is not subnormal.
\hfill$\square$
\end{ex}

\bigskip

\begin{rem}\label{rem3.11}
The assumption ``$\Theta$ is diagonal" in Theorem \ref{thm3.7}
seems to be still somewhat rigid. \
A careful analysis of the proof of Theorem \ref{thm3.7}
shows that this assumption was used only in proving STEP 3 (and whence STEP 4). \
However, we did not directly employ the assumption ``$\Theta\equiv \Theta_2$ is diagonal"
in the proofs of STEP 5 and STEP 6; instead we used the statement in STEP 4. \
Also, we have already recognized that STEP 1 and STEP 2 hold with no
restriction on $\Theta$. \
Therefore if we make the assumption
``$A$ and $\Omega$ are left coprime" in Theorem \ref{thm3.7}, then
Theorem \ref{thm3.7} still holds for a general form of $\Theta$. \
Moreover, if we assume that $A$ and $\Theta$ are left coprime then
we do not need an additional assumption that
$\Omega$ and $\Theta\Omega^*$ are coprime because
it was used only in the proof of STEP 3 (as an auxiliary lemma for STEP 4). \
Consequently,
if we strengthen the left coprime-ness for the analytic part of the symbol
then we can relax the restriction on $\Theta$. \
\end{rem}

Therefore we get:
\medskip

\begin{cor}\label{cor3.11} {\rm (Abrahamse's Theorem for matrix-valued symbols, Version II)}
\quad
Suppose $\Phi=\Phi_-^*+ \Phi_+\in L^\infty_{M_n}$ is such that
$\Phi$ and $\Phi^*$ are of bounded type and $\hbox{det}\, \Phi_+$
and $\hbox{det}\, \Phi_-$ are not identically zero. \
Then in view of (\ref{5-0}), we may write
$$
\Phi_+=A^*\Theta_0\Theta_2\quad\hbox{and}\quad \Phi_- = B^* \Theta_2 \,,
$$
where
$\Theta_0\Theta_2=\theta I_n$ with an inner function $\theta$. \
Assume that $A,B$ and $\Theta_2$ are left coprime and
$\Theta_2$ has a nonconstant diagonal-constant inner divisor $\Omega\equiv \omega I_n$ ($\omega$ inner). \
If
\medskip

{\rm (i)} $T_\Phi$ is hyponormal; and

{\rm (ii)} $\text{\rm ker}\,[T_{\Phi}^*, T_{\Phi}]$ is invariant
under $T_{\Phi}$\,,
\medskip

\noindent then $T_{\Phi}$ is either normal or analytic. \
Hence, in
particular, if $T_\Phi$ is subnormal then it is either normal or analytic.
\end{cor}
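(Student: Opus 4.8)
The plan is to follow the six-step argument in the proof of Theorem~\ref{thm3.7} almost verbatim, replacing only STEP~3 and STEP~4 by a one-line divisibility observation. The point, already flagged in Remark~\ref{rem3.11}, is that the hypothesis ``$\Theta_2$ is diagonal'' was used in Theorem~\ref{thm3.7} solely to produce the conclusion of STEP~4, namely the left coprimeness of $A$ and $\Omega$ recorded in (\ref{3.8-11}); STEPs~1, 2, 5 and 6 never invoke diagonality, but only that $\Omega\equiv\omega I_n$ is diagonal-constant (hence central, commuting with every matrix function). So once (\ref{3.8-11}) is secured from the present hypotheses, the rest transcribes unchanged. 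As in Theorem~\ref{thm3.7}, I would first set $\Theta_2:=\Theta$; if $\Theta_2$ is a unitary constant then $\Phi_-\in M_n$, whence $\Phi\in H^\infty_{M_n}$ and $T_\Phi$ is analytic, so I may assume $\Theta_2$ is nonconstant and write $\Theta_2=\Omega\Delta=\Delta\Omega$ for an inner $\Delta$ (the two factorizations agreeing because $\Omega$ is central).

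The substitute for STEPs~3--4 is immediate. Suppose $E$ is a common left inner divisor of $A$ and $\Omega$. Since $\Omega$ left-divides $\Theta_2=\Omega\Delta$, the divisor $E$ also left-divides $\Theta_2$, so $E$ is a common left inner divisor of $A$ and $\Theta_2$; by the standing hypothesis that $A$ and $\Theta_2$ are left coprime, $E$ must be a unitary constant. This is exactly (\ref{3.8-11}), and it is obtained without any appeal to Lemma~\ref{lem5.2} or Lemma~\ref{lem5.3}, to the coprimeness of $\Omega$ and $\Theta_2\Omega^*$, or to the determinant hypotheses --- which is precisely where strengthening the coprimeness of the analytic part buys the freedom to drop diagonality. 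With (\ref{3.8-11}) in hand, STEP~1 gives $\Theta_0 H^2_{\mathbb C^n}\subseteq\hbox{ker}\,[T_\Phi^*,T_\Phi]$ (cf.\ (\ref{3.9})) and STEP~2 gives $\hbox{cl ran}\,H_{A\Theta_2^*}\subseteq\hbox{ker}\,(I-T_{\widetilde K}T_{\widetilde K}^*)$ for $K\in\mathcal E(\Phi)$ (cf.\ (\ref{3.15})), both with no restriction on $\Theta_2$.

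It then remains to run STEPs~5 and 6. Here the only place to watch is the identity $A\Theta_2^*=A\Omega^*\Delta^*=\Omega^* A\Delta^*$, which holds because $\Omega$ is central and requires nothing about the shape of $\Theta_2$; combined with the right coprimeness of $\widetilde A$ and $\widetilde\Omega$ (the tilded reformulation of (\ref{3.8-11})) and with Lemma~\ref{lem1.1}, this yields $\mathcal H_{\widetilde\Omega}\subseteq\hbox{cl ran}\,H_{A\Theta_2^*}$, hence $\mathcal H_{\widetilde\Omega}\subseteq\hbox{ker}\,(I-T_{\widetilde K}T_{\widetilde K}^*)$. The norm computation of STEP~5 together with Lemma~\ref{lem5.6} then forces $K$ to be inner (giving (\ref{3.13-1})), and STEP~6 (using $\hbox{ker}\,(I-T_{\widetilde K}T_{\widetilde K}^*)=\hbox{ker}\,H_{\widetilde K^*}$ and the nonconstancy of $\widetilde\omega$) forces every entry of $K$ to be constant, so that $[T_\Phi^*,T_\Phi]=0$ and $T_\Phi$ is normal. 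The main obstacle is thus not a new calculation but the bookkeeping verification that diagonality of $\Theta_2$ is genuinely inessential everywhere outside STEPs~3--4 --- in particular in the factorization $A\Theta_2^*=\Omega^* A\Delta^*$ and the ensuing application of Lemma~\ref{lem1.1}, both of which lean only on the centrality of $\Omega$ and on (\ref{3.8-11}).
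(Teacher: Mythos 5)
Your proposal is correct and is essentially the paper's own argument: the paper proves Corollary~\ref{cor3.11} precisely by invoking Remark~\ref{rem3.11}, i.e., by observing that the hypothesis that $A$ and $\Theta_2$ are left coprime immediately yields the conclusion (\ref{3.8-11}) of STEP~4 (since $\Omega$ left-divides $\Theta_2$), after which STEPs~1, 2, 5 and 6 of the proof of Theorem~\ref{thm3.7} run unchanged. Your write-up simply makes that bookkeeping explicit.
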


\begin{proof}
This follows from Remark \ref{rem3.11} and an analysis of the proof
of Theorem \ref{thm3.7}.
\end{proof}

\noindent
\begin{rem}\label{rem3.13}
Observe that Corollary \ref{cor3.11} is a substantive generalization
of \cite[Theorem 3.5]{CHL2}, in which $\Theta_2$ is
diagonal-constant.
\end{rem}

%
%
%
%

\section{A Subnormal Toeplitz Completion }

\medskip
%
%

Given a partially specified operator matrix with some known entries,
the problem of finding suitable operators to complete the given
partial operator matrix so that the resulting matrix satisfies
certain given properties is called a {\it completion problem}. \ A
{\it subnormal completion} of a partial operator matrix is a
particular specification of the unspecified entries resulting in a
subnormal operator. \
A {\it partial block Toeplitz matrix} is simply
an $n\times n$ matrix some of whose entries are specified Toeplitz
operators and whose remaining entries are unspecified. \
A {\it
subnormal Toeplitz completion} of a partial block Toeplitz matrix is
a subnormal completion whose unspecified entries are Toeplitz
operators. \

In \cite{CHL1}, the following subnormal Toeplitz completion problem
was considered:

\medskip

\noindent{\bf Problem A.} Let $U$ be the unilateral shift on $H^2$. \  Complete
the unspecified Toeplitz entries of the partial block Toeplitz
matrix $A:=\left[\begin{smallmatrix} U^*& ?\\
?&U^*\end{smallmatrix}\right]$ to make $A$ subnormal. \

\medskip

The solution of Problem A given in \cite[Theorem 5.1]{CHL1} relies
upon very intricate and long computations using the symbol involved. \
In this section,  by employing our main result in Section 3, we
provide a shorter and more insightful proof for the following
problem which is a more general version of Problem A:\

\medskip

\noindent{\bf Problem B.}\ Let $b_\lambda$ be a Blaschke factor of
the form $b_\lambda(z):=\frac{z-\lambda}{1-\overline \lambda z}$
($\lambda\in \mathbb D$). \
Complete the unspecified Toeplitz entries
of the partial block Toeplitz matrix
$$
A:=\begin{bmatrix} T_{\overline b_\alpha} & ?\\ ?& T_{\overline
b_\beta}\end{bmatrix}\quad\hbox{($\alpha,\beta\in\mathbb D$)}
$$
to make $A$ subnormal.

\bigskip

To answer Problem B, we need:


\begin{lem}\label{lem4.1}\quad
Let
$$
\Phi_-=\begin{bmatrix} b_\alpha &\theta_1 \overline{b}\\
\theta_0 \overline{a}& b_\alpha \end{bmatrix} \quad (a \in \mathcal
H_{z\theta_0},\ b\in \mathcal H_{z\theta_1}\ \hbox{and $\theta_j$
inner} \  (j=0,1))
$$
and $\hbox{\rm ker}\,H_{\Phi_-^*}=\Delta H^2_{\mathbb C^2}$. \

\medskip

(a) If  $\theta_0=b_\alpha^n \theta_0^{\prime}$ {\rm ($n\ge 1$,
$\theta_0'(\alpha)\ne 0$)}
   and $\theta_1(\alpha) \neq 0$, then
$$
\Delta=
\begin{cases}\qquad\qquad
\begin{bmatrix} b_\alpha \theta_1&0\\0&\theta_0\end{bmatrix} \quad &(n=1);\\
\frac{1}{\sqrt{|\gamma|^2+1}}\begin{bmatrix} b_\alpha
\theta_1&\gamma
\theta_1\\-\overline{\gamma}\theta_0&b_\alpha^{n-1}\theta_0'\end{bmatrix}
\quad &(n \geq 2)\quad
\left(\gamma:=-\frac{a(\alpha)}{\theta_1(\alpha)}\right).
\end{cases}
$$

(b) If  $\theta_1=b_\alpha^n \theta_1^{\prime}$ {\rm ($n\ge 1$,
$\theta_1'(\alpha)\ne 0$)} and $\theta_0(\alpha) \neq 0$, then
$$
\Delta=
\begin{cases}\qquad\qquad
\begin{bmatrix}\theta_1&0\\0& b_\alpha\theta_0\end{bmatrix} \quad &(n=1);\\
\frac{1}{\sqrt{|\gamma|^2+1}}\begin{bmatrix}b_\alpha^{n-1}\theta_1'&
-\overline{\gamma}\theta_1\\ \gamma\theta_0&
b_\alpha\theta_0\end{bmatrix} \quad &(n \geq 2)\quad
\left(\gamma:=-\frac{b(\alpha)}{\theta_0(\alpha)}\right).
\end{cases}
$$

(c) If  $\theta_0(\alpha)\ne 0$ and $\theta_1(\alpha)\ne 0$, then
$$
\Delta=
\begin{bmatrix} b_\alpha \theta_1&0\\0& b_\alpha\theta_0\end{bmatrix}\,.
$$

(d) If $\theta_0= b_\alpha \theta_0'$ and $\theta_1 = b_\alpha
\theta_1'$ then
$$
\Delta=
\begin{cases}
\qquad\qquad
   \begin{bmatrix} \theta_1 &0\\0&\theta_0\end{bmatrix}
&\Bigl((ab)(\alpha)\neq (\theta_0'\theta_1')(\alpha)\Bigr);\\
      \frac{1}{\sqrt{|\gamma|^2+1}}
         \begin{bmatrix} \theta_1&\gamma\theta_1^{\prime}\\
              -\overline{\gamma}\theta_0&\theta_0^{\prime}\end{bmatrix} \quad
&\Bigl((ab)(\alpha)=(\theta_0'\theta_1')(\alpha)\Bigr) \quad
\left(\gamma:=-\frac{a(\alpha)}{\theta_1'(\alpha)}\right).
\end{cases}
$$
\end{lem}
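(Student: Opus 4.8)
The plan is to compute the kernel straight from its defining condition. Since $\ker H_{\Phi_-^*}=\{F\in H^2_{\mathbb C^2}:\Phi_-^*F\in H^2_{\mathbb C^2}\}$, writing $F=(f,g)^t$ and $\Phi_-^*=\left[\begin{smallmatrix}\overline{b_\alpha}&\overline{\theta_0}a\\ \overline{\theta_1}b&\overline{b_\alpha}\end{smallmatrix}\right]$, membership in the kernel is equivalent to the two scalar conditions
\[
(\star_1)\quad \overline{b_\alpha}f+\overline{\theta_0}a\,g\in H^2,\qquad (\star_2)\quad \overline{\theta_1}b\,f+\overline{b_\alpha}g\in H^2 .
\]
This kernel is invariant under multiplication by $z$, and since $\Phi_-^*$ is of bounded type, Lemma \ref{lem1.1} tells us it has the form $\Delta H^2_{\mathbb C^2}$ for a square inner $\Delta$; my goal is to produce $\Delta$ explicitly and then verify $\ker H_{\Phi_-^*}=\Delta H^2_{\mathbb C^2}$ by a double inclusion together with a direct check that $\Delta^*\Delta=I_2$ a.e.\ on $\mathbb T$. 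Throughout I use the standing assumption that the factorizations are coprime, i.e.\ $\theta_0,a$ and $\theta_1,b$ are coprime pairs; in the degenerate cases (a), (b), (d) this is precisely what forces $a(\alpha)\ne0$ (resp.\ $b(\alpha)\ne0$), and hence $\gamma\ne0$.

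The first step in each case is to decouple $(\star_1)$ and $(\star_2)$ into divisibility statements. Using that $u\in H^2$ forces $\psi u\in H^2$ for inner $\psi$, and that $\overline{\psi}u\in H^2\Leftrightarrow u\in\psi H^2$, I multiply $(\star_1)$ and $(\star_2)$ by suitable inner functions ($\theta_0$, $\theta_1$, or a power $b_\alpha^{\,k}$) and subtract the terms already known to lie in $H^2$. In case (c), where $\theta_0(\alpha)\ne0$ and $\theta_1(\alpha)\ne0$, this immediately yields $f\in b_\alpha\theta_1H^2$ and $g\in b_\alpha\theta_0H^2$ (using coprimality of $b_\alpha$ with $\theta_0,\theta_1$ and of $\theta_0,\theta_1$ with $a,b$), so $\Delta=\operatorname{diag}(b_\alpha\theta_1,b_\alpha\theta_0)$ and the double inclusion is routine. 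In the remaining cases one writes $\theta_0=b_\alpha^{\,n}\theta_0'$ (resp.\ $\theta_1=b_\alpha^{\,n}\theta_1'$) and tracks the order of vanishing at $\alpha$: multiplying $(\star_1)$ through by $b_\alpha^{\,n}$ produces $a g\in\theta_0'H^2$, and pushing $(\star_1)$ one factor of $b_\alpha$ further, combined with the $(\star_2)$-information, gives the sharp memberships $f\in\theta_1H^2$ and $g\in b_\alpha^{\,n-1}\theta_0'H^2$ (symmetrically in case (b)).

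What survives the decoupling is a single residual scalar condition at $\alpha$, and this is where the off-diagonal structure of $\Delta$ and the parameter $\gamma$ enter. Writing $f=\theta_1f_5$ and $g=b_\alpha^{\,n-1}\theta_0'g_4$ and dividing the last unused consequence of $(\star_1)$ by the common power of $b_\alpha$, the leftover requirement is $(f+R)(\alpha)=0$ with $R=a g_4$, which reduces exactly to $f_5(\alpha)=\gamma g_4(\alpha)$ for $\gamma=-a(\alpha)/\theta_1(\alpha)$. I then verify $\ker H_{\Phi_-^*}=\Delta H^2_{\mathbb C^2}$ as follows: the inclusion $\supseteq$ is direct substitution, and for $\subseteq$ I solve $\Delta(h_1,h_2)^t=(f,g)^t$, obtaining (with $s=\sqrt{|\gamma|^2+1}$) the values $h_2=s^{-1}(\overline\gamma f_5+g_4)\in H^2$ and $b_\alpha h_1=s^{-1}(f_5-\gamma g_4)$; the quotient $h_1$ lands in $H^2$ precisely because $(f_5-\gamma g_4)(\alpha)=0$, i.e.\ precisely because of the residual coupling. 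Innerness of $\Delta$ is immediate on $\mathbb T$: the two products contributing to each off-diagonal entry of $\Delta^*\Delta$ both reduce to $\overline{b_\alpha}$ and cancel, while each diagonal entry equals $|\gamma|^2+1$, so after the normalization $s^{-2}$ one gets $\Delta^*\Delta=I_2$. Case (b) follows from case (a) by the coordinate swap $1\leftrightarrow2$ (which interchanges $(\theta_0,a)$ with $(\theta_1,b)$), and in case (d), where $b_\alpha$ divides both $\theta_0$ and $\theta_1$, the same analysis produces the two point-relations $f(\alpha)=-\tfrac{a(\alpha)}{\theta_0'(\alpha)}g(\alpha)$ and $g(\alpha)=-\tfrac{b(\alpha)}{\theta_1'(\alpha)}f(\alpha)$, which are compatible (defining a genuine one-parameter family, hence a non-diagonal $\Delta$) exactly when $(ab)(\alpha)=(\theta_0'\theta_1')(\alpha)$ and otherwise force $f(\alpha)=g(\alpha)=0$ (the diagonal $\Delta$).

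I expect the main obstacle to be the bookkeeping in the coupled cases (a), (b) with $n\ge2$ and the resonant subcase of (d): one must check that after stripping off every ``free'' divisibility the residual condition is a single linear relation at $\alpha$ of exactly the predicted form, neither too weak (which would leave a strictly larger kernel) nor too strong. The cleanest safeguard against an arithmetic slip is the reverse-inclusion computation above, since the requirement $h_1\in H^2$ reproduces the coupling condition verbatim; this simultaneously confirms that the displayed $\Delta$ is the correct \emph{and} maximal inner function generating the kernel.
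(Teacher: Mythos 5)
Your argument is correct, and it is worth noting that the paper itself offers no proof of this lemma beyond the citation ``this follows from a slight variation of [CHL1, Lemmas 5.4, 5.5, and 5.6]'' (which treat the case $\alpha=0$, $b_\alpha=z$); what you have written is essentially a self-contained reconstruction of that omitted computation, and the route --- unpack $\ker H_{\Phi_-^*}=\{F:\Phi_-^*F\in H^2_{\mathbb C^2}\}$ into the two scalar conditions $(\star_1),(\star_2)$, strip off the forced inner divisors using coprimality, isolate the one residual linear relation at $\alpha$, and confirm the resulting $\Delta$ by inverting it and checking $\Delta^*\Delta=I_2$ --- is exactly the right one. I verified the key computations: in case (a) with $n\ge2$ the conditions reduce to $f=\theta_1f_5$, $g=b_\alpha^{n-1}\theta_0'g_4$ with $\theta_1(\alpha)f_5(\alpha)+a(\alpha)g_4(\alpha)=0$, and solving $\Delta(h_1,h_2)^t=(f,g)^t$ gives $h_1=(sb_\alpha)^{-1}(f_5-\gamma g_4)$, so $h_1\in H^2$ reproduces precisely that relation; the off-diagonal entries of $\Delta^*\Delta$ cancel because both products reduce to $\gamma\overline{b_\alpha}$. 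Two small points deserve attention. First, $a$ and $b$ lie only in $H^2$ (not $H^\infty$), so products such as $ag$ live in the Smirnov class rather than $H^2$; the divisibility arguments (``$ag\in\theta_0'H^2$ and $a$ coprime to $\theta_0'$ imply $g\in\theta_0'H^2$'') are standard there, but you should say you are working with inner parts in $N^+$. Second, in case (d) you have the denominators transposed: the relations coming from $(\star_1)$ and $(\star_2)$ are $\theta_1'(\alpha)f_1(\alpha)+a(\alpha)g_1(\alpha)=0$ and $b(\alpha)f_1(\alpha)+\theta_0'(\alpha)g_1(\alpha)=0$ for the reduced functions $f_1=f/\theta_1'$, $g_1=g/\theta_0'$, i.e.\ $f_1(\alpha)=-\tfrac{a(\alpha)}{\theta_1'(\alpha)}g_1(\alpha)$ (not $\theta_0'$ in the denominator); this is harmless since the compatibility determinant $(\theta_0'\theta_1')(\alpha)-(ab)(\alpha)$ and the value of $\gamma$ in the lemma come out the same, and it also shows that in the resonant subcase one automatically has $\theta_0'(\alpha)\ne0\ne\theta_1'(\alpha)$ (otherwise $(ab)(\alpha)=0$, contradicting coprimality), so $\gamma$ is well defined.
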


\begin{proof}
This follows from a slight variation of the proof of [CHL1, Lemmas
5.4, 5.5, and 5.6].
\end{proof}

\bigskip

We are ready for:

\medskip

\begin{thm} \label{thm4.2}
Let $\varphi, \psi \in L^{\infty}$ and consider
$$
A:=\begin{bmatrix} T_{\overline b_\alpha} & T_\varphi\\ T_\psi&
T_{\overline
b_\beta}\end{bmatrix}\quad\hbox{($\alpha,\beta\in\mathbb D$)}\,,
$$
where $b_\lambda$ is a Blaschke factor of the form
$b_\lambda(z):=\frac{z-\lambda}{1-\overline \lambda z}$ ($\lambda\in
\mathbb D$). \
The following statements are equivalent.
\begin{itemize}
\item[(a)] $A$ is normal.
\item[(b)] $A$ is subnormal.
\item[(c)] $A$ is $2$-hyponormal.
\item[(d)] $\alpha=\beta$ and one of the following conditions
holds:
\end{itemize}
\medskip
\begin{itemize}
\item[1.] $\varphi=e^{i\theta} b_\alpha + \zeta$\quad and\quad $\psi=e^{i\omega}\varphi$\quad
\hbox{\rm ($\zeta\in\mathbb C$; $\theta,\omega\in [0,2\pi)$);}
\item[2.] $\varphi=\mu\, \overline b_\alpha + e^{i\theta}\sqrt{1+|\mu|^2}\,b_\alpha + \zeta$\quad and\quad
$\psi=e^{i\,(\pi-2\,{\rm arg}\,\mu)}\varphi$ \ ($\mu,\zeta\in\mathbb
C$, $\mu\ne 0$, $|\mu|\ne 1$, $\theta\in [0,2\pi)$),
\end{itemize}
\medskip
\noindent
except in the following special case:
\begin{equation}\label{6.6-0}
\hbox{$\varphi_-=b_\alpha \theta_0^{\prime} \overline a$ and
$\psi_-=b_\alpha \theta_1^{\prime} \overline b$ (coprime
factorizations)\ \  with
$(ab)(\alpha)=(\theta_0^{\prime} \theta_1^{\prime})(\alpha)\ne 0$\,.}
\end{equation}
However, if we also know that $\varphi,\psi\in L^\infty$ are rational functions
having the same number of poles then
either {\rm (2)} holds for $|\mu|=1$ or
$$
\varphi=e^{i\theta}\overline b_\alpha  + 2 e^{i\omega} b_\alpha +
\zeta \quad\hbox{and}\quad \psi=e^{-2i\theta}\varphi\quad
(\theta,\omega\in [0,2\pi),\ \zeta\in\mathbb C):
$$
in this case, $A+e^{-i\theta} \zeta$ is quasinormal.
\end{thm}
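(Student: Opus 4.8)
The plan is to establish the equivalence of the four conditions (a)--(d) by exploiting the chain of implications (a) $\Rightarrow$ (b) $\Rightarrow$ (c), which are automatic, so that the real content lies in proving (c) $\Rightarrow$ (d) and (d) $\Rightarrow$ (a). For the hard direction (c) $\Rightarrow$ (d), I would first write $A=T_\Phi$ for the $2\times 2$ matrix symbol
$$
\Phi=\begin{bmatrix}\overline b_\alpha & \varphi\\ \psi & \overline b_\beta\end{bmatrix}\in L^\infty_{M_2},
$$
and observe that $2$-hyponormality forces hyponormality. The first task is therefore to use Lemma \ref{lem1.2} (Cowen--Hendricks--Rutherford): hyponormality of $T_\Phi$ requires $\Phi$ to be normal and $\mathcal E(\Phi)$ to be nonempty. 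The normality condition $\Phi^*\Phi=\Phi\Phi^*$ is a system of scalar equations relating $\varphi,\psi,b_\alpha,b_\beta$; I expect this to immediately force $\alpha=\beta$ (by comparing the diagonal entries, which involve $|\varphi|^2$ versus $|\psi|^2$ together with the analytic parts of $\overline b_\alpha,\overline b_\beta$) and to impose a rigid relationship of the form $\psi=\lambda\varphi$ for a unimodular constant $\lambda$.

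Once $\alpha=\beta$ and the normality constraints are in hand, the next step is to analyze the structure of $\Phi_-$. Writing the coprime factorizations $\varphi_-=\theta_0\overline a$ and $\psi_-=\theta_1\overline b$ (in the notation of Lemma \ref{lem4.1}), the symbol $\Phi_-$ takes exactly the form to which Lemma \ref{lem4.1} applies, giving the inner function $\Delta$ with $\hbox{ker}\,H_{\Phi_-^*}=\Delta H^2_{\mathbb C^2}$. The key leverage is that, because $\overline b_\alpha$ contributes the Blaschke factor $b_\alpha$ to both diagonal entries of $\Phi_-$, the inner matrix $\Theta$ in the left coprime factorization $\Phi_-=B^*\Theta$ will always contain a nonconstant diagonal-constant divisor $\Omega=b_\alpha I_2$ (or a suitable power), except precisely in the degenerate case (\ref{6.6-0}) where the off-diagonal analytic data conspire. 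This is the point at which Theorem \ref{thm3.7} (or Corollary \ref{cor3.8}, since $b_\alpha$ is a finite Blaschke product) becomes available: hyponormality plus invariance of $\hbox{ker}\,[T_\Phi^*,T_\Phi]$ under $T_\Phi$ --- which $2$-hyponormality supplies --- forces $T_\Phi$ to be either normal or analytic. Since $\Phi\notin H^\infty_{M_2}$ (the entry $\overline b_\alpha$ is not analytic), the analytic alternative is excluded, so $T_\Phi$ must be normal; then Lemma \ref{lem1.3} pins down $\varphi,\psi$ into exactly the two listed forms.

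The main obstacle, and the step requiring the most care, is precisely the exceptional case (\ref{6.6-0}): here $\theta_0=b_\alpha\theta_0'$, $\theta_1=b_\alpha\theta_1'$ with $(ab)(\alpha)=(\theta_0'\theta_1')(\alpha)\ne 0$, and by Lemma \ref{lem4.1}(d) the inner function $\Delta$ becomes the full (non-diagonal) rotated matrix rather than a diagonal one, so that $\Theta$ no longer admits the required diagonal-constant divisor $\Omega$ with $\Omega$ and $\Theta\Omega^*$ coprime, and Theorem \ref{thm3.7} simply does not apply. For the final clause of the theorem I would therefore add the rationality hypothesis (equal numbers of poles), which forces $\theta_0,\theta_1$ to be finite Blaschke products of equal degree; a direct analysis of the $2$-hyponormality condition in this finite-dimensional setting --- verifying positivity of the relevant $2\times 2$ operator matrix built from Hankel operators --- should then show that either $|\mu|=1$ in case (2) or else $\varphi,\psi$ take the displayed form with coefficient $2$ on $b_\alpha$, and in the latter situation an explicit computation of $[T_\Psi^*,T_\Psi]$ for $\Psi:=\Phi+e^{-i\theta}\zeta I_2$ verifies that $T_\Psi^*T_\Psi$ commutes with $T_\Psi$, i.e.\ quasinormality. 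For the easy direction (d) $\Rightarrow$ (a), I would simply substitute each of the normal forms back into $\Phi$ and check $\Phi^*\Phi=\Phi\Phi^*$ together with the condition (\ref{1.3-1}) of Lemma \ref{lem1.3}, exhibiting the requisite constant unitary $U$ explicitly.
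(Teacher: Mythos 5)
Your overall architecture ((a)$\Rightarrow$(b)$\Rightarrow$(c) trivially, (d)$\Rightarrow$(a) by computation, and the hard direction (c)$\Rightarrow$(d) via Lemma \ref{lem1.2} and the matrix Abrahamse theorem) matches the paper, but two of your key assertions are not correct as stated, and they are exactly where the real work lies. First, normality of the symbol does \emph{not} ``immediately force $\alpha=\beta$.'' Comparing diagonal entries of $\Phi^*\Phi=\Phi\Phi^*$ gives only $|\varphi|=|\psi|$, and the off-diagonal entries give $(b_\alpha-b_\beta)(\psi+\overline b_\alpha\overline b_\beta\overline\varphi)=0$; if $\alpha\ne\beta$ this yields the perfectly consistent relation $\psi=-\overline b_\alpha\overline b_\beta\overline\varphi$, not a contradiction. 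The paper must combine this relation with the hyponormality data (the function $K\in\mathcal E(\Phi)$, the induced hyponormality of four scalar Toeplitz operators, and the coprime factorizations $\varphi_+=\theta_1\theta_3\overline d$, etc.) to eventually derive $d(\alpha)=0$, contradicting coprimality. Likewise, symbol normality certainly does not impose $\psi=\lambda\varphi$ with $|\lambda|=1$ at this stage; that emerges only at the very end.

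Second, your claim that outside the exceptional case (\ref{6.6-0}) the inner factor $\Theta$ in $\Phi_-=B^*\Theta$ ``always contains'' the divisor $b_\alpha I_2$ is false. Writing $\theta_0=b_\alpha^m\theta_0'$, $\theta_1=b_\alpha^n\theta_1'$, Lemma \ref{lem4.1} shows that when $m\ne n$ or $\max(m,n)\ge 2$ the inner matrix $\Delta$ is genuinely non-diagonal (e.g.\ $\nu\left[\begin{smallmatrix} b_\alpha\theta_1&\gamma\theta_1\\ -\overline\gamma\theta_0&b_\alpha^{n-1}\theta_0'\end{smallmatrix}\right]$) and need not admit any nonconstant diagonal-constant divisor, so Theorem \ref{thm3.7}/Corollary \ref{cor3.8} cannot be invoked there. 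The paper instead proves that these cases \emph{cannot occur} under $2$-hyponormality, via a lengthy analysis (Cases 1--3) using the inclusion $\hbox{cl ran}\,H_{A\Omega_2^*}\subseteq\hbox{ker}(I-T_{\widetilde K}T_{\widetilde K}^*)$ from STEP 2 of Theorem \ref{thm3.7}, explicit computations of $H_{A\Omega_2^*}$ on test vectors, and the reproducing-kernel identity $T_{k(\overline z)}\delta_1=k(\alpha)\delta_1$; only after $m=n\in\{0,1\}$ is established does Corollary \ref{cor3.8} apply. Finally, in the exceptional case your ``direct analysis of $2$-hyponormality'' is too vague to substitute for the paper's argument, which first shows $\theta_2=\theta_3=1$ and $\theta_0=\theta_1=b_\alpha$ (using the equal-number-of-poles hypothesis), deduces that $[T_\Phi^*,T_\Phi]$ has rank at most one since $\hbox{ran}\,[T_\Phi^*,T_\Phi]\subseteq\mathcal H_{\Omega_0}$ with $\dim\mathcal H_{\Omega_0}=1$, and then applies Morrel's theorem to conclude quasinormality of a translate before the final coefficient computation. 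These three omissions constitute genuine gaps rather than routine details.
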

\bigskip

As a straightforward consequence of Theorem \ref{thm4.2}, we obtain

\medskip

\begin{cor}\label{cor4.3}
Let
$$
A:=\begin{bmatrix} U^* & U^*+2U\\ U^*+2U & U^*\end{bmatrix}\,,
$$
where $U\equiv T_z$ is the unilateral shift on $H^2$. \
Then $A$ is a quasinormal (therefore subnormal) completion of
$\left[\begin{smallmatrix} U^*& ?\\
?&U^*\end{smallmatrix}\right]$, and $A$ is not normal.
\end{cor}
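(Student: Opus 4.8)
The plan is to extract the data of the partial matrix and then quote the rational part of Theorem~\ref{thm4.2}. First I would note that $b_0(z)=z$, so $U=T_{b_0}$ and $U^*=T_{\overline z}=T_{\overline{b_0}}$; thus the specified diagonal entries are exactly $T_{\overline{b_\alpha}}=T_{\overline{b_\beta}}$ with $\alpha=\beta=0$, and by linearity the proposed off-diagonal completion is $T_\varphi=T_\psi=U^*+2U=T_{\overline{b_0}+2b_0}$, i.e.\ $\varphi=\psi=\overline z+2z$. These symbols are rational and, being equal, trivially have the same number of poles, so the rational addendum of Theorem~\ref{thm4.2} is available. A quick check also shows that the present data satisfies the exceptional condition (\ref{6.6-0}), with $\varphi_-=\psi_-=z=b_0$ (so $\theta_0'=\theta_1'=1$, $a=b=1$ and $(ab)(0)=(\theta_0'\theta_1')(0)=1\neq0$), which is precisely the stratum where subnormality need not force normality.

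Next I would match $\varphi,\psi$ against the quasinormal template appearing there, namely $\varphi=e^{i\theta}\overline{b_\alpha}+2e^{i\omega}b_\alpha+\zeta$ and $\psi=e^{-2i\theta}\varphi$. Taking $\alpha=\beta=0$ and $\theta=\omega=\zeta=0$ reproduces precisely $\varphi=\psi=\overline{b_0}+2b_0=\overline z+2z$, so Theorem~\ref{thm4.2} yields that $A+e^{-i\theta}\zeta=A$ is quasinormal. Since every quasinormal operator is subnormal, this already shows that $A$ is a quasinormal (hence subnormal) Toeplitz completion of $\left[\begin{smallmatrix} U^*&?\\ ?&U^*\end{smallmatrix}\right]$, establishing the first assertion.

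It remains to verify that $A$ is not normal, and here I would argue directly rather than by re-reading the classification. Writing $A=T_\Phi$ with $\Phi=\left[\begin{smallmatrix}\overline z&\overline z+2z\\ \overline z+2z&\overline z\end{smallmatrix}\right]$, I would separate analytic and co-analytic parts to obtain $\Phi_+=\left[\begin{smallmatrix}0&2z\\ 2z&0\end{smallmatrix}\right]$ and $\Phi_-=\left[\begin{smallmatrix}z&z\\ z&z\end{smallmatrix}\right]$. Since $A$ is subnormal, hence hyponormal, Lemma~\ref{lem1.2} already guarantees that the symbol $\Phi$ is normal (a one-line pointwise computation confirms $\Phi\Phi^*=\Phi^*\Phi$), and clearly $\det\Phi_+=-4z^2\not\equiv0$. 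Lemma~\ref{lem1.3} then applies and says that $A$ is normal if and only if $\Phi_+-\Phi_+(0)=(\Phi_--\Phi_-(0))U$ for some constant unitary $U$; since $\Phi_+(0)=\Phi_-(0)=0$ this reduces to $\Phi_+=\Phi_-U$.

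The only genuinely delicate point is this last identity, and I expect it to be the crux of the argument. The obstruction is a rank count: $\Phi_-=z\left[\begin{smallmatrix}1&1\\ 1&1\end{smallmatrix}\right]$ has rank one for a.e.\ $z$, so $\Phi_-U$ has rank at most one for every constant matrix $U$, whereas $\Phi_+$ is invertible for a.e.\ $z$ because $\det\Phi_+\not\equiv0$. No constant $U$ can raise the rank, so $\Phi_+=\Phi_-U$ is impossible and $A$ is not normal. Combining the two halves gives the corollary: $A$ is a non-normal, quasinormal (therefore subnormal) completion.
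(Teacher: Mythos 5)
Your proposal is correct and follows the paper's intended route: the paper offers no separate argument for Corollary \ref{cor4.3} beyond calling it a straightforward consequence of Theorem \ref{thm4.2}, and you do exactly what that requires --- identify $\alpha=\beta=0$, $\varphi=\psi=\overline z+2z$, check that the data lies in the exceptional stratum (\ref{6.6-0}) rather than in cases (1)--(2) of (d), and invoke the rational addendum with $\theta=\omega=0$, $\zeta=0$ to get quasinormality of $A+e^{-i\theta}\zeta=A$. For non-normality your route via Lemma \ref{lem1.3} together with the pointwise rank obstruction ($\Phi_-=z\left[\begin{smallmatrix}1&1\\1&1\end{smallmatrix}\right]$ has rank one a.e.\ while $\det\Phi_+=-4z^2\not\equiv 0$ makes $\Phi_+$ a.e.\ invertible) is a clean substitute for reading off $[T_\Phi^*,T_\Phi]=\left[\begin{smallmatrix}2&-2\\-2&2\end{smallmatrix}\right](1-UU^*)\ne 0$ from the computation in Case B-2 of the theorem's proof; both verifications are valid.
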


\bigskip

\begin{proof}[Proof of Theorem \ref{thm4.2}] \
Clearly (a) $\Rightarrow$ (b) and (b) $\Rightarrow$ (c). \
Moreover, (d) $\Rightarrow$ (a) follows from a straightforward calculation. \

(c) $\Rightarrow$ (d): \ Write
$$
\Phi\equiv \left[\begin{matrix} \overline b_\alpha&\varphi\\
\psi&\overline b_\beta\end{matrix}\right] \equiv \Phi_-^*+\Phi_+
=\begin{bmatrix} b_\alpha& \psi_-\\
\varphi_-& b_\beta\end{bmatrix}^{\ast}+\begin{bmatrix} 0&\varphi_+\\
\psi_+& 0\end{bmatrix}
$$
and assume that $T_\Phi$ is $2$-hyponormal. \
Since $\hbox{ker}\,
[T^*, T]$ is invariant under $T$ for every $2$-hyponormal operator
$T \in \mathcal{B}(\mathcal{H})$, we note that Theorem \ref{thm3.7}
holds for $2$-hyponormal operators $T_{\Phi}$
under the same assumption on the symbol. \
We claim that
\medskip
\begin{align}
&|\varphi|=|\psi|, \; \textrm{and} \label{6.5}\\
&\Phi \ \hbox{and} \ \Phi^* \ \hbox{are of bounded type.}
\label{6.6}
\end{align}

\noindent
Indeed, if $T_\Phi$ is hyponormal then $\Phi$ is normal,
so that a straightforward calculation gives (\ref{6.5}). \
Also, by Lemma \ref{lem1.2}
there exists a matrix function
$K\equiv \left[\begin{smallmatrix} k_1&k_2\\
k_3&k_4\end{smallmatrix}\right] \in \mathcal{E}(\Phi)$, i.e.,
$||K||_\infty\le 1$ such that $\Phi-K\Phi^*\in H^\infty_{M_2}$,
i.e.,
\begin{equation}\label{6.6-1}
\left[\begin{matrix} \overline b_\alpha&\overline{\varphi_-}\\
\overline{\psi_-}& \overline b_\beta\end{matrix}\right]\, -\, \left[\begin{matrix} k_1&k_2\\
k_3&k_4\end{matrix}\right]\, \left[\begin{matrix} 0&\overline {\psi_+}\\
\overline {\varphi_+}&0\end{matrix}\right] \in H^2_{M_2},
\end{equation}
which implies that
$$
H_{\overline b_\alpha}=H_{k_2\overline {\varphi_+}}=H_{\overline
{\varphi_+}}T_{k_2} \quad\hbox{and}\quad H_{\overline
b_\beta}=H_{k_3\overline{\psi_+}}=H_{\overline {\psi_+}}T_{k_3}.
$$
If $\overline{\varphi_+}$ is not of bounded type then
$\hbox{ker}\,H_{\overline{\varphi_+}}=\{0\}$, so that $k_2=0$, a
contradiction; and if $\overline{\psi_+}$ is not of  bounded type
then $\hbox{ker}\,H_{\overline{\psi_+}}=\{0\}$, so that $k_3=0$, a
contradiction. \
Thus $\overline{\varphi_+}$ and
$\overline{\psi_+}$ are of bounded type, so that $\Phi^*$ is of
bounded type. \
Since $T_{\Phi}$ is hyponormal, it follows from (\ref{1.3-3}) that
$\Phi$ is also of
bounded type, giving (\ref{6.6}). \
Thus we can write
$$
\varphi_-:=\theta_0\overline{a}\quad\hbox{and}\quad \psi_-:=\theta_1
\overline{b}\quad\hbox{($a\in \mathcal H_{z\theta_0}$, $b\in \mathcal
H_{z\theta_1}$)},
$$
where $\theta_0$ and $\theta_1$ are inner, $a$ and $\theta_0$ are
coprime and $b$ and $\theta_1$ are coprime. \
On the other hand,
by (\ref{6.6-1}), we have
\begin{equation}\label{6.7-1}
\begin{cases}
\overline b_\alpha-k_2 \overline{\varphi_+}\in H^2,\quad
\overline{\theta}_1 b-k_4\overline{\varphi_+}\in H^2\\
\overline b_\beta-k_3 \overline{\psi_+}\in H^2,\quad
\overline{\theta}_0 a-k_1\overline{\psi_+}\in H^2,
\end{cases}
\end{equation}
which implies that the following Toeplitz
operators are all hyponormal (by Cowen's Theorem):
\begin{equation}\label{6.7-5}
T_{\overline b_\alpha+\varphi_+},\ \ T_{\overline{\theta}_1 b +
\varphi_+},\ \ T_{\overline b_\beta+\psi_+},\ \
T_{\overline{\theta}_0 a + \psi_+}.
\end{equation}
Then by the scalar-valued version of Lemma \ref{lem3.1}, we can
write
\begin{equation}\label{6.8-1}
\varphi_+=\theta_1\theta_3\overline d\quad\hbox{and}\quad
\psi_+=\theta_0\theta_2\overline c\quad\hbox{($d\in\mathcal
H_{z\theta_1\theta_3}$, $c\in\mathcal H_{z\theta_0\theta_2}$)},
\end{equation}
where $\theta_2$ and $\theta_3$ are inner, $d$ and
$\theta_1\theta_3$ are coprime, and $c$ and $\theta_0\theta_2$ are
coprime. \
In particular, $d(\alpha)\ne 0$ and $c(\beta)\ne 0$. \
We now claim that
\begin{equation}\label{6.8-2}
\alpha=\beta.
\end{equation}
Assume to the contrary that $\alpha\ne\beta$. Since $\Phi$ is
normal, i.e., $\Phi\Phi^*=\Phi^*\Phi$, we have
$$
\begin{bmatrix}
    \overline b_{\alpha} & \varphi\\ \psi&\overline{b}_{\beta}\end{bmatrix}
      \begin{bmatrix} {b_{\alpha}}& \overline{\psi}\\ \overline{\varphi}&{b}_{\beta} \end{bmatrix}
         =\begin{bmatrix} {b_{\alpha}}& \overline{\psi}\\ \overline{\varphi}&{b}_{\beta} \end{bmatrix}
             \begin{bmatrix} \overline b_{\alpha} & \varphi\\ \psi&\overline{b}_{\beta} \end{bmatrix}\,,
$$
which gives
$$
\overline b_{\alpha} \overline{\psi}+\varphi b_{\beta}
     =b_{\alpha}\varphi+\overline{\psi}\overline{b}_{\beta},\ \ \hbox{i.e.,}\ \
         (b_{\alpha} -  b_{\beta}) (\psi+ \overline b_\alpha \overline b_\beta \overline\varphi)=0\,,
$$
which implies that $\psi=-\overline b_{\alpha} \overline b_{\beta}
\overline \varphi$ since $\alpha\ne \beta$. \
We put
$$
\varphi_-^\prime := P_{\mathcal H(b_{\alpha}
b_{\beta})}(\varphi_-)\quad\hbox{and}\quad \varphi_-^{\prime\prime}
:= P_{b_{\alpha} b_{\beta}H^2}(\varphi_-).
$$
We then have
\begin{equation}\label{6.8-3}
\psi_+=-\overline b_{\alpha} \overline b_{\beta}
\varphi_-^{\prime\prime}\quad\hbox{and}\quad
\psi_-=-{{b_{\alpha}b_{\beta}}}(\varphi_+
+\overline{\varphi_-^\prime}).
\end{equation}
It thus follows from (\ref{6.8-3}) that
\begin{equation}\label{6.8-4}
\theta_1 \overline{b}=\psi_-=-{{b_{\alpha}b_{\beta}}}(\varphi_+
+\overline{\varphi_-^\prime}), \quad\hbox{so that}\ \
   \overline{b}= - b_{\alpha} b_{\beta} (\theta_3\overline{d}
           +\overline{\theta_1}\overline{\varphi_-^\prime})\in \overline{H^2}
\end{equation}
which gives
$$
\theta_3\overline{d} +
\overline{\theta_1}\overline{\varphi_-^\prime}\in
\overline{H^2},\quad \hbox{and hence,}\ \ d\in \theta_3 H^2,
$$
which implies that $\theta_3$ is a constant because $\theta_3$ and
$d$ are coprime. \
We therefore have $\varphi_+=\theta_1
\overline{d}$. \
It thus follows from (\ref{6.7-5}) together with
again Lemma \ref{lem3.1} that
$$
\theta_1=b_{\alpha}\theta_1'\quad\hbox{(some inner function
$\theta_1^\prime$)}.
$$
But since by (\ref{6.8-4}),
$$
\overline{b}= - b_{\alpha} b_{\beta}\overline{(d+\theta_1
\varphi_-^\prime)}\in\overline{H^2},
$$
so that
$$
d+\theta_1 \varphi_-^\prime \in b_\alpha b_\beta H^2,
$$
which implies that $d(\alpha)=0$, a contradiction because $\theta_1$
and $d$ are coprime. \
This proves (\ref{6.8-2}).

\medskip

We now write
$$
\Phi\equiv \left[\begin{matrix} \overline b_\alpha&\varphi\\
\psi&\overline b_\alpha \end{matrix}\right] \equiv \Phi_-^*+\Phi_+
=\begin{bmatrix} b_\alpha & \psi_-\\
\varphi_-& b_\alpha \end{bmatrix}^{\ast}+\begin{bmatrix} 0&\varphi_+\\
\psi_+& 0\end{bmatrix}\,,
$$
where
$$
\varphi_-:=\theta_0\overline{a}\quad\hbox{and}\quad \psi_-:=\theta_1
\overline{b}\qquad (a\in \mathcal H_{\theta_0},\  b \in \mathcal
H_{\theta_1}).
$$
Moreover, we have
\begin{equation}\label{6.7-77}
\begin{cases}
\overline b_\alpha-k_2 \overline{\varphi_+}\in H^2,\quad
\overline{\theta}_1 b-k_4\overline{\varphi_+}\in H^2\\
\overline b_\alpha-k_3 \overline{\psi_+}\in H^2,\quad
\overline{\theta}_0 a-k_1\overline{\psi_+}\in H^2
\end{cases}
\end{equation}
and the following Toeplitz operators are all hyponormal: \
\begin{equation}\label{6.7-55}
T_{\overline b_\alpha+\varphi_+},\ \ T_{\overline{\theta}_1 b +
\varphi_+},\ \ T_{\overline b_\alpha+\psi_+},\ \
T_{\overline{\theta}_0 a + \psi_+}.
\end{equation}
Note that $\varphi_+\psi_+$ is not identically zero, so that
$\hbox{det}\,\Phi_+$ is not. \
Put
$$
\theta_0=b_{\alpha}^m\theta_0^{\prime}\quad\hbox{and}\quad
\theta_1=b_{\alpha}^n \theta_1^{\prime}\qquad (m,n\ge 0;\
\theta_0^{\prime}(\alpha)\ne 0,\ \theta_1^{\prime}(\alpha)\ne 0).
$$
We now claim that
\begin{equation} \label{6.7}
m=n=0\quad\hbox{or}\quad m=n=1.
\end{equation}
We split the proof of (\ref{6.7}) into three cases. \

\bigskip

%
%
%

\noindent  {\bf Case 1} ($m\ne 0$ and $n=0$): \quad In this case, we
have $a(\alpha)\ne 0$ because $\theta_0(\alpha)=0$ and $\theta_0$
and $a$ are coprime. \
We first claim that
\begin{equation}\label{6.7-p}
m=1.
\end{equation}
To show this we assume to the contrary that $m \geq 2$. \
Write
$$
\gamma:=-\frac{a(\alpha)}{\theta_1(\alpha)} \quad
\hbox{and} \quad  \nu:=\frac{1}{\sqrt{|\gamma|^2+1}}.
$$
To get the left coprime factorization of $\Phi_-$, applying Lemma
\ref{lem4.1}(b) for $\widetilde{\Phi_-}$ gives \
\medskip
$$
\widetilde{\Phi_-}
=\begin{bmatrix} \widetilde{b}_{\alpha}&\widetilde{\theta_0} \overline{\widetilde{a}}\\
\widetilde{\theta_1}\overline{\widetilde{b}}&\widetilde{b}_{\alpha}
\end{bmatrix} =\widetilde\Omega_2\widetilde
B^*\quad\hbox{(right coprime factorization)}\,,
$$
where
$$
\Omega_2:=
\nu \begin{bmatrix} b_{\alpha}^{m-1}\theta_0'&\gamma \theta_1\\
-\overline{\gamma} \theta_0 &b_{\alpha}\theta_1
\end{bmatrix}\quad\hbox{and}\quad B\in H^2_{M_n}\,,
$$
which gives
$$
\Phi_- =\begin{bmatrix} b_{\alpha}&\theta_1 \overline{b}\\
\theta_0\overline{a}&b_{\alpha} \end{bmatrix} = B^*\Omega_2
\quad\hbox{(left coprime factorization)}\,.
$$
On the other hand, by a scalar-valued version of Lemma \ref{lem3.1}
and (\ref{6.7-55}), we can see that \
$$
\varphi_+=b_{\alpha} \theta_1 \theta_3\overline{d}\ \ \hbox{and}\ \
\psi_+=\theta_0\theta_2\overline{c}\ \ \hbox{for some inner
functions $\theta_2,\theta_3$,}
$$
where $d\in\mathcal{H}_{z b_{\alpha} \theta_1 \theta_3}$ and $c\in
\mathcal H_{z\theta_0\theta_2}$. \
Thus  in particular, $c(\alpha)\ne
0$ and $d(\alpha)\ne 0$. \
We first observe that
\begin{equation}\label{6.7-6}
k_3(\alpha)=0\quad\hbox{and}\quad k_4(\alpha)=0:
\end{equation}
indeed, in (\ref{6.7-77}),
$$
\begin{aligned}
\overline b_{\alpha}-k_3 \overline{\psi_+}\in
H^2&\Longrightarrow\overline b_{\alpha}-k_3
\overline{\theta_0\theta_2}c\in
H^2\\
&\Longrightarrow b_{\alpha}^{m-1}\theta_0'\theta_2 -k_3c\in b_{\alpha}^{m}\theta_0'\theta_2H^2\\
&\Longrightarrow k_3(\alpha)=0\quad\hbox{(since $m\ge 2$)}
\end{aligned}
$$
and
$$
\begin{aligned}
\overline{\theta}_1 b-k_4\overline{\varphi_+}\in
H^2&\Longrightarrow\overline{\theta}_1 b-k_4\overline b_{\alpha}\overline{\theta_1 \theta_3}d\in H^2\\
&\Longrightarrow b_{\alpha} \theta_3 b-k_4d\in b_{\alpha} \theta_1
\theta_3H^2\\
&\Longrightarrow k_4(\alpha)=0\,,
\end{aligned}
$$
which proves (\ref{6.7-6}). \
Write
$$
\theta_2=b_{\alpha}^q \theta_2'\quad\hbox{and}\quad
\theta_3=b_{\alpha}^p \theta_3'\quad (\theta_2'(\alpha)\neq 0,\
\theta_3'(\alpha) \neq 0).
$$
Then we can write
$$
\Phi_+
=\begin{bmatrix} 0& b_{\alpha}\theta_1\theta_3 \overline d\\
     \theta_0\theta_2\overline c &0 \end{bmatrix}
=\begin{bmatrix} 0&b_{\alpha}^{p+1} \theta_1
\theta_3'\overline{d}\\
b_{\alpha}^{m+q}\theta_0'\theta_2'\overline{c}& 0\end{bmatrix}.
$$
We suppose that $p+1 < m+q$ and write $r:= (m+q)-(p+1)>0$. \
Then
$$
\Phi_+=(b_{\alpha}^{m+q}\theta_1\theta_3^\prime\theta_0^\prime\theta_2^\prime) I_2
\begin{bmatrix} 0&\theta_1\theta_3^\prime c\\b_{\alpha}^r \theta_0^\prime\theta_2^\prime d&0\end{bmatrix}^*
\equiv (\theta I_2)A^*,
$$
where
$\theta:=b_{\alpha}^{m+q}\theta_1\theta_3^\prime\theta_0^\prime\theta_2^\prime$. \
Observe that
$$
 A\Omega_2^*=\nu \begin{bmatrix}0& \theta_1\theta_3^\prime c\\ b_{\alpha}^r \theta_0^\prime\theta_2^\prime d&0
\end{bmatrix}
\begin{bmatrix} b_{\alpha}^{m-1}\theta_0'&\gamma \theta_1\\
-\overline{\gamma} \theta_0 &b_{\alpha}\theta_1
\end{bmatrix}^*
=\nu \begin{bmatrix}\overline{\gamma}\theta_3^{\prime}c & \overline b_{\alpha}\theta_3^{\prime}c\\
b_{\alpha}^{r-m+1} \theta_2^{\prime}d &-\gamma
b_{\alpha}^{r-m}\theta_2^{\prime}d
\end{bmatrix}\,.
$$
If $r\ge m-1$, then we have
$$
H_{A\Omega_2^*}\begin{bmatrix}0\\
1\end{bmatrix}
 =\nu \begin{bmatrix}
H_{\overline{b}_{\alpha}}(\theta_3^{\prime}c)\\
-\gamma H_{b_{\alpha}^{r-m}}(\theta_2^{\prime}d)
\end{bmatrix}.
$$
Put
$$
\delta:=\frac{\sqrt{1-|\alpha|^2}}{1-\overline{\alpha}z} \quad
\hbox{and} \quad
\delta_1:= \widetilde{\delta}=\frac{\sqrt{1-|\alpha|^2}}{1-{\alpha}z}
$$
and observe that $\hbox{ran}\, H_{\overline{b}_{\alpha}}=\mathcal
H_{b_{\overline{\alpha}}}=\bigvee \{\delta_1\}$. \
Since
$(\theta_3^{\prime}c)(\alpha)\neq 0$ and $(\theta_2'd)(\alpha) \neq
0$, it follows from (\ref{3.15}) that
\begin{equation}\label{6.7-81}
\begin{bmatrix} \delta_1\\ \beta \delta_1  \end{bmatrix} \in\hbox{cl ran}\, H_{A\Omega_2^*}\subseteq
\hbox{ker}\,(I-T_{\widetilde{K}}T_{\widetilde{K}}^*),
\end{equation}
where $\beta\in\mathbb C$ is possibly zero (when $r\ge m$). \
We observe
that if $k\in H^2$, then since $\frac{1}{1-\alpha z}$ is the
reproducing kernel for $\overline\alpha$, we can get
\begin{equation}\label{6.7-80}
T_{k(\overline{z})}\delta_1=k(\alpha)\delta_1:
\end{equation}
indeed, if $k\in H^2$ and $n\geq 0$, then
$$
\langle k(\overline{z})\delta_1, \ z^n \rangle=\langle \delta_1, \
\overline{k(\overline{z})}z^n \rangle = \overline{\langle
\widetilde{k}z^n, \ \delta_1
\rangle}=\sqrt{1-|\alpha|^2}\overline{\widetilde{k(\overline{\alpha})}
      \overline{\alpha}^n}=\sqrt{1-|\alpha|^2}k(\alpha)\alpha^n\,,
$$
so that
$$
T_{k(\overline{z})}\delta_1=P(k(\overline{z})\delta_1)
=\sqrt{1-|\alpha|^2}k(\alpha)\sum_{n=0}^{\infty}\alpha^nz^n=k(\alpha)\frac{\sqrt{1-|\alpha|^2}}{1-\alpha
z}=k(\alpha)\delta_1\,,
$$
which proves (\ref{6.7-80}). \
It thus follows from (\ref{6.7-6}), (\ref{6.7-81}) and (\ref{6.7-80}) that
$$
\aligned
\begin{bmatrix} \delta_1\\ \beta \delta_1  \end{bmatrix}=T_{\widetilde{K}}T_{\widetilde{K}}^*\begin{bmatrix} \delta_1\\ \beta \delta_1
\end{bmatrix}
& = \begin{bmatrix} T_{\widetilde k_1} & T_{\widetilde k_3}\\
      T_{\widetilde k_2} & T_{\widetilde k_4}\end{bmatrix}
           \begin{bmatrix} T_{\overline{\widetilde k}_1} & T_{\overline{\widetilde k}_2}\\
              T_{\overline{\widetilde k}_3} & T_{\overline{\widetilde k}_4}\end{bmatrix}
                    \begin{bmatrix} \delta_1\\ \beta \delta_1  \end{bmatrix}\\
& = \begin{bmatrix}T_{\widetilde{k_1}}& T_{\widetilde{k_3}}\\
   T_{\widetilde{k_2}}&T_{\widetilde{k_4}}\end{bmatrix}
     \begin{bmatrix}T_{k_1(\overline{z})}\delta_1+\beta T_{k_2(\overline{z})}\delta_1\\
       T_{k_3(\overline{z})}\delta_1+\beta T_{k_4(\overline{z})}\delta_1\end{bmatrix}\\
& = \begin{bmatrix}T_{\widetilde{k_1}}& T_{\widetilde{k_3}}\\
T_{\widetilde{k_2}}&T_{\widetilde{k_4}}\end{bmatrix}
\begin{bmatrix}(k_1(\alpha)+\beta k_2(\alpha))\delta_1\\0\end{bmatrix}\\
& = \begin{bmatrix}\widetilde{k_1}(k_1(\alpha)+\beta k_2(\alpha))\delta_1\\ \widetilde{k_2}(k_1(\alpha)+\beta
k_2(\alpha))\delta_1\end{bmatrix},
\endaligned
$$
which implies that $k_1$ and $k_2$ are nonzero constants. \
Thus by (\ref{6.7-77}),
$$
\overline b_{\alpha} -k_2\overline{\varphi_+}\in H^2 \Longrightarrow
b_{\alpha}\overline{\varphi}_+ \in H^2 \Longrightarrow
\overline{\theta_1\theta_3}d \in H^2\,,
$$
which implies that $\theta_1\theta_3$ is a constant. \
Without loss of generality we may assume $\theta_1\theta_3=1$ and  $\psi_-=0$. \
Similarly, from (\ref{6.7-77}), $\overline{\theta_0}a-k_1
\overline{\psi_+}\in H^2$, i.e., $\overline{\theta_0}a-k_1\overline{\theta_0}\overline{\theta_2}c\in H^2$
implies $\theta_2=1$. \
But since by (\ref{6.5}), $|\varphi|=|\psi|$, we have \
$$
|b_{\alpha}\overline d +\overline{\theta_0} a|=|\varphi_+
+\overline{\varphi_-}|=|\psi_+|=|\theta_0 \overline c|
\quad\hbox{(where $a\in\mathcal{H}_{\theta_0},\ c\in\mathcal H_{z\theta_0}$ )}\,,
$$
which implies
$$
{b_{\alpha}\theta_0} (b_{\alpha}\overline d +\overline{\theta_0}
a)(\overline{b}_{\alpha} d +{\theta_0} \overline a) =
{b_{\alpha}\theta_0} c \overline{c}\,,
$$
so that
\begin{equation}\label{55}
ad= b_{\alpha}\Bigl(
({\theta_0}\overline c){c}-({\theta_0}\overline d){d}
  -({\theta_0} \overline a) ({\theta_0} \overline d) b_{\alpha}
    -({\theta_0}\overline a) {a} \Bigr)\,.
\end{equation}
But since $a, c\in\mathcal{H}_{z\theta_0}$, and $d\in\mathcal H_{z
b_{\alpha}}$, it follows that ${\theta}_0 \overline a,
{\theta}_0 \overline c$ and ${\theta}_0 \overline d$ are in
${H^2}$. \
Thus (\ref{55}) implies that $(ad)(\alpha)= 0$,
a contradiction. \
Therefore this case cannot occur.

\medskip


If instead $r< m-1$ then the same argument as before leads to a contradiction.
Therefore this case cannot occur. \
Moreover, by the same argument as in the case $p+1<m+q$, the case $p+1\ge m+q$
cannot also occur. \
This proves (\ref{6.7-p}). \


\bigskip

Now by Lemma \ref{lem4.1}(b), we
can write \
$$
\Phi_-=\begin{bmatrix} \theta_0^\prime & a \\ b_\alpha b &
\theta_1\end{bmatrix}^*
           \begin{bmatrix} \theta_0 &0\\ 0& b_\alpha \theta_1\end{bmatrix} \equiv
                B^*\Omega_2\ \ \hbox{(left coprime factorization).}
$$
Observe
$$
\Omega_2\equiv \begin{bmatrix} \theta_0 &0\\
0& b_\alpha \theta_1\end{bmatrix}= \begin{bmatrix} b_\alpha &0\\ 0& b_\alpha\end{bmatrix}
\begin{bmatrix} \theta_0' &0\\ 0&\theta_1\end{bmatrix}.
$$
Since $\theta_0'(\alpha)\ne 0$ and $\theta_1(\alpha)\ne 0$, it follows
from Lemma \ref{lem5.5} that
$\begin{bmatrix} b_\alpha&0\\ 0&b_\alpha\end{bmatrix}$ and
$\begin{bmatrix} \theta_0' &0\\ 0&\theta_1\end{bmatrix}$ are coprime,
so that by Corollary \ref{cor3.8}, $T_\Phi$ should be normal. \
Since $\hbox{det}\,\Phi_+$ is not identically zero, it
follows form Lemma \ref{lem1.3} that $\Phi_+-\Phi_-U\in M_n$ for some
constant unitary matrix $U\equiv \left[\begin{smallmatrix} c_1&c_2\\
c_3&c_4\end{smallmatrix}\right]$. \
We observe
$$
\begin{aligned}
\Phi_+-\Phi_-U\in M_n
      &\Longleftrightarrow \begin{bmatrix} 0& b_\alpha \theta_1\theta_3\overline d\\
                            \theta_0\theta_2\overline c&0\end{bmatrix}
                              - \begin{bmatrix} b_\alpha &\theta_1\overline b\\ \theta_0\overline a& b_\alpha\end{bmatrix}
                                  \begin{bmatrix} c_1&c_2\\ c_3&c_4\end{bmatrix}
                                  =\begin{bmatrix}\xi_1&\xi_2\\ \xi_3&\xi_4\end{bmatrix}\quad (\xi_i\in\mathbb C)\\
      &\Longrightarrow
         \begin{cases} c_1 b_\alpha+c_3\theta_1\overline b=-\xi_1\\
                       c_4 b_\alpha+c_2\theta_0\overline a=-\xi_4
         \end{cases}\\
      &\Longrightarrow  c_1=0,\ c_4\ne 0\\
      &\Longrightarrow U=\begin{bmatrix} 0&c_2\\ c_3&c_4\end{bmatrix}\quad(c_4\ne 0),
\end{aligned}
$$
which contradicts the fact that $U$ is unitary. \
Thus this case cannot occur.

\bigskip

%
%

\noindent {\bf Case 2} ($m=0$ and $n\ne 0$): \quad This case is
symmetrical to Case 1. \
The proof is identical to that of Case 1. \
Therefore this case cannot occur either. \

\bigskip

%
%

\noindent  {\bf Case 3} ($m\ne 0$, $n\ne 0$ and $m\ge 2$ or $n\ge
2$): \quad In this case, $a(\alpha)\ne 0$ and $b(\alpha)\ne 0$ since
$\theta_0$ and $a$ are coprime and $\theta_1$ and $b$ are coprime. \
By Lemma \ref{lem4.1}(d), we can write \
$$
\Phi_-=\begin{bmatrix} b_\alpha^{m-1}\theta_0^\prime & a \\ b &
b_\alpha^{n-1} \theta_1^\prime\end{bmatrix}^*
           \begin{bmatrix} \theta_0 &0\\ 0&\theta_1\end{bmatrix} \equiv
                B^*\Omega_2\ \ \hbox{(left coprime factorization).}
$$
We first suppose $m\ge 2$. \
By a scalar-valued version of Lemma \ref{lem3.1} and (\ref{6.7-55}), we
can see that \
$$
\varphi_+= \theta_1 \theta_3\overline{d}\ \ \hbox{and}\ \
\psi_+=\theta_0\theta_2\overline{c}\ \ \hbox{for some inner
functions $\theta_2,\theta_3$,}
$$
where $d\in\mathcal{H}_{z \theta_1 \theta_3}$ and $c\in \mathcal H_{z\theta_0\theta_2}$. \
Note that $c(\alpha)\ne 0$ and $d(\alpha)\ne 0$. \
We first observe
\begin{equation}\label{55-55}
k_3(\alpha)=0:
\end{equation}
indeed,
in (\ref{6.7-77}),
$\overline b_\alpha -k_3 \overline{\psi_+}\in H^2$ implies
$b_\alpha^{m-1}\theta_0'\theta_2 -k_3 c\in b_\alpha^{m}\theta_0'\theta_2H^2,$
giving $k_3(\alpha)=0$. \
Write
$$
\theta_2= b_\alpha^q \theta_2'\quad\hbox{and}\quad \theta_3=b_\alpha^p
\theta_3'\quad \hbox{($\theta_2'(\alpha)\neq 0$, $\theta_3'(\alpha) \neq 0$)}.
$$
Then we can write
$$
\Phi_+
=\begin{bmatrix} 0& \theta_1\theta_3 \overline d\\
     \theta_0\theta_2\overline c &0 \end{bmatrix}
=\begin{bmatrix} 0&b_\alpha^{n+p}\, \theta_1'
\theta_3'\overline{d}\\
b_\alpha^{m+q}\,\theta_0'\theta_2'\overline{c}& 0\end{bmatrix}.
$$
If $n+p \le m+q$, write $r:= (m+q)-(n+p)\ge 0$. \
Then
$$
\Phi_+=
(b_\alpha^{m+q}\theta_1'\theta_3^\prime\theta_0^\prime\theta_2^\prime)I_2
\begin{bmatrix} 0&\theta_1'\theta_3^\prime c\\b_\alpha^r \theta_0^\prime\theta_2^\prime d&0\end{bmatrix}^*
\equiv (\theta I_2) A^*,
$$
where
$\theta:=b_\alpha^{m+q}\theta_1'\theta_3^\prime\theta_0^\prime\theta_2^\prime$. \
Observe that
$$
A\Omega_2^*=
\begin{bmatrix} 0& \theta_1'\theta_3^\prime c\\ b_\alpha^r \theta_0^\prime\theta_2^\prime d&0\end{bmatrix}
\begin{bmatrix} \theta_0& 0\\ 0 & \theta_1\end{bmatrix}^*
= \begin{bmatrix} 0 & \overline b_\alpha^n \theta_3^{\prime}c\\
b_\alpha^{r-m} \theta_2^{\prime}d & 0\end{bmatrix}\,,
$$
so that
$$
H_{A\Omega_2^*} \begin{bmatrix} 0\\ b_\alpha^{n-1}\end{bmatrix}
  = \begin{bmatrix} H_{\overline b_\alpha} (\theta_3' c)\\ 0\end{bmatrix}\,.
$$
Since $(\theta_3'c)(\alpha)\ne 0$, it follows from
(\ref{3.15}) and the same argument as (\ref{6.7-81}) that
$$
\begin{bmatrix} \delta_1\\ 0\end{bmatrix} \in\hbox{cl ran}\, H_{A\Omega_2^*}\subseteq
\hbox{ker}\,(I-T_{\widetilde{K}}T_{\widetilde{K}}^*)\,.
$$
Since by (\ref{55-55}), $k_3(\alpha)=0$, it follows  from (\ref{6.7-80}) that
$$
\aligned
\begin{bmatrix} \delta_1\\ 0 \end{bmatrix}
       = T_{\widetilde{K}}T_{\widetilde{K}}^*\begin{bmatrix} \delta_1\\ 0 \end{bmatrix}
   &  =\begin{bmatrix}T_{\widetilde{k_1}}& T_{\widetilde{k_3}}\\
                      T_{\widetilde{k_2}}&T_{\widetilde{k_4}}\end{bmatrix}
         \begin{bmatrix} T_{\overline{\widetilde k}_1} & T_{\overline{\widetilde k}_2}\\
              T_{\overline{\widetilde k}_3} & T_{\overline{\widetilde k}_4}\end{bmatrix}
                  \begin{bmatrix} \delta_1\\ 0 \end{bmatrix} \\
   & =\begin{bmatrix}T_{\widetilde{k_1}}& T_{\widetilde{k_3}}\\
             T_{\widetilde{k_2}}&T_{\widetilde{k_4}}\end{bmatrix}
                \begin{bmatrix}k_1(\alpha)\delta_1\\k_3(\alpha)\delta_1\end{bmatrix}\\
   & =         \begin{bmatrix} k_1(\alpha)\widetilde{k_1}\delta_1\\
              k_1(\alpha)\widetilde{k_2}\delta_1\end{bmatrix},
\endaligned
$$
which implies that $k_2=0$. \
This leads to a contradiction with (\ref{6.7-77}). \
If $n+p > m+q$, then a similar argument leads to a contradiction. \
Also if instead $n\ge 2$, then the argument is symmetrical with the case $m\ge 2$. \
Thus this case cannot occur. \

\medskip

Consequently, all three cases cannot occur. \
This proves (\ref{6.7}). \

\bigskip

%
%

Now it suffices to consider the case $m=n=0$ and the case $m=n=1$. \

\medskip

\noindent  {\bf Case A} ($m=n=0$)\quad In this case, by Lemma
\ref{lem4.1}(c) we can write
$$
\Phi_-=\begin{bmatrix} \theta_0 & b_\alpha a \\ b_\alpha b &
\theta_1\end{bmatrix}^*
           \begin{bmatrix} b_\alpha \theta_0 &0\\ 0& b_\alpha\theta_1\end{bmatrix} \equiv
                B^*\Omega_2\ \ \hbox{(left coprime factorization).}
$$
Observe that
$$
\Omega_2\equiv \begin{bmatrix} b_\alpha \theta_0 &0\\
0& b_\alpha \theta_1\end{bmatrix}= \begin{bmatrix} b_\alpha &0\\ 0& b_\alpha \end{bmatrix}
\begin{bmatrix} \theta_0 &0\\ 0&\theta_1\end{bmatrix}.
$$
Since $\theta_0(\alpha)\ne 0$ and $\theta_1(\alpha)\ne 0$, it follows
from Lemma \ref{lem5.5} that
$\begin{bmatrix} b_\alpha & 0\\ 0 & b_\alpha \end{bmatrix}$ and
$\begin{bmatrix} \theta_0 &0\\ 0&\theta_1\end{bmatrix}$ are coprime,
so that by Corollary \ref{cor3.8}, $T_\Phi$ should be normal. \
Since $\hbox{det}\,\Phi_+$ is not identically zero, it
follows form Lemma \ref{lem1.3} that $\Phi_+-\Phi_-U\in M_n$ for some
constant unitary matrix $U\equiv \left[\begin{smallmatrix} c_1&c_2\\
c_3&c_4\end{smallmatrix}\right]$. \
We observe
\begin{equation}\label{6.7.81}
\begin{aligned}
\Phi_+-\Phi_-U\in M_n
      &\Longleftrightarrow \begin{bmatrix} 0& \theta_1\theta_3\overline d\\
                            \theta_0\theta_2\overline c&0\end{bmatrix}
                              - \begin{bmatrix} b_\alpha &\theta_1\overline b\\ \theta_0\overline a& b_\alpha\end{bmatrix}
                                  \begin{bmatrix} c_1&c_2\\ c_3&c_4\end{bmatrix}
                                     =\begin{bmatrix}\xi_1&\xi_2\\ \xi_3&\xi_4\end{bmatrix}\quad (\xi_i\in\mathbb C)\\
      &\Longleftrightarrow
         \begin{cases} c_1 b_\alpha+c_3\theta_1\overline b=-\xi_1\\
                       c_4 b_\alpha+c_2\theta_0\overline a=-\xi_4\\
                       \theta_1\theta_3\overline d=c_2 b_\alpha + c_4\theta_1\overline b+\xi_2\\
                       \theta_0\theta_2\overline c=c_3 b_\alpha + c_1\theta_0\overline a+\xi_3
         \end{cases}\\
      &\Longrightarrow
        \begin{cases} c_1=0,\ \theta_1\overline b=\hbox{a constant}\\
                      c_4=0,\ \theta_0 \overline a=\hbox{a constant}\\
                      \varphi_+=\theta_1\theta_3 \overline d =c_2 b_\alpha+\hbox{a constant}\\
                      \psi_+=\theta_0\theta_2 \overline c= c_3 b_\alpha+\hbox{a constant}\,.
         \end{cases}
\end{aligned}
\end{equation}
Since $U$ is unitary we have $c_2=e^{i\omega_1}$ and
$c_3=e^{i\omega_2}$ ($\omega_1,\omega_2\in [0,2\pi)$). \
Thus we have
$$
\varphi=e^{i\omega_1} b_\alpha + \beta_1\quad\hbox{and}\quad
\psi=e^{i\omega_2} b_\alpha +\beta_2\quad(\beta_1,\beta_2\in\mathbb C).
$$
But since $|\varphi|=|\psi|$, it follows that
$$
\varphi=e^{i\omega} b_\alpha + \zeta\quad\hbox{and}\quad
\psi=e^{i\delta}\varphi
          \quad\hbox{($\omega,\delta\in [0,2\pi)$, $\zeta\in\mathbb C$))}.
$$

\bigskip

%
%

\noindent {\bf Case B ($m=n=1$)}:\quad We split the proof into two
subcases.

\bigskip

\noindent {\bf Case B-1 ($m=n=1$; $(ab)(\alpha)\ne
(\theta_0^\prime\theta_1^\prime)(\alpha)$)}:\quad In this case, by
Lemma \ref{lem4.1}(d), we can write
$$
\Phi_-=\begin{bmatrix} \theta_0^\prime & a \\ b &
\theta_1^\prime\end{bmatrix}^*
           \begin{bmatrix} b_\alpha \theta_0^\prime &0\\ 0& b_\alpha\theta_1^\prime \end{bmatrix} \equiv
                B^*\Omega_2\ \ \hbox{(left coprime factorization).}
$$
Observe
$$
\Omega_2\equiv \begin{bmatrix} b_\alpha\theta_0' &0\\
0&b_\alpha\theta_1'\end{bmatrix}= \begin{bmatrix} b_\alpha&0\\ 0&b_\alpha\end{bmatrix}
\begin{bmatrix} \theta_0' &0\\ 0&\theta_1'\end{bmatrix}.
$$
Since $\theta_0'(\alpha)\ne 0$ and $\theta_1'(\alpha)\ne 0$,
it follows from Lemma \ref{lem5.5} that
$\begin{bmatrix} b_\alpha&0\\ 0&b_\alpha\end{bmatrix}$ and
$\begin{bmatrix} \theta_0' &0\\ 0&\theta_1'\end{bmatrix}$ are coprime,
so that by Corollary \ref{cor3.8}, $T_\Phi$ should be normal. \
By the same argument as in (\ref{6.7.81}), we can see that
$$
\theta_2=\theta_3=1 \quad\hbox{and}\quad
\theta_0^\prime=\theta_1^\prime =1
$$
and we can write
$$
\Phi_+=\begin{bmatrix}0&\varphi_+\\ \psi_+&0
\end{bmatrix}\quad\hbox{and}\quad
\Phi_-^*=\begin{bmatrix}\overline{b}_\alpha & a\, \overline b_\alpha\\
b \, \overline b_\alpha & \overline b_\alpha
\end{bmatrix}\qquad (a,b\in\mathbb C; \ a\ne 0,\ b\ne 0).
$$
Since $T_\Phi$ is normal we have
$$
\begin{bmatrix}
H_{\overline{\varphi_+}}^*H_{\overline{\varphi_+}}&0\\0&H_{\overline{\psi_+}}^*H_{\overline{\psi_+}}
\end{bmatrix}=\begin{bmatrix}
(1+|b|^2)H_{\overline b_\alpha}^* H_{\overline b_\alpha}&(a + \overline{b})H_{\overline b_\alpha}^* H_{\overline b_\alpha}\\
(\overline{a}+b)H_{\overline b_\alpha}^* H_{\overline b_\alpha} & (1+|a|^2)H_{\overline b_\alpha}^* H_{\overline
b_\alpha}\end{bmatrix},
$$
which implies that
\begin{equation}\label{6.17}
\begin{cases}
b=-\overline{a}\\
H_{\overline{\varphi_+}}^*H_{\overline{\varphi_+}}=(1+|b|^2)H_{\overline b_\alpha}^* H_{\overline b_\alpha}\\
H_{\overline{\psi_+}}^*H_{\overline{\psi_+}}=(1+|a|^2)H_{\overline b_\alpha}^* H_{\overline b_\alpha}.
\end{cases}
\end{equation}
Since $ab\ne (\theta_0'\theta_1')(\alpha)$, we have $1\ne |ab|=|a|^2$, i.e.,
$|a|\ne 1$. \
We thus have
$$
\varphi_+=e^{i\theta_1}\sqrt{1+|a|^2}\,b_\alpha  +
\beta_1\quad\hbox{and}\quad
\psi_+=e^{i\theta_2}\sqrt{1+|a|^2}\,b_\alpha + \beta_2,
$$
($\beta_1,\beta_2\in\mathbb C;\ \theta_1,\theta_2\in [0,2\pi)$)
which implies that
$$
\varphi=a\, \overline b_\alpha + e^{i
\theta_1}\sqrt{1+|a|^2}\,b_\alpha + \beta_1\quad\hbox{and}\quad
\psi=-\overline{a}\, \overline b_\alpha + e^{i
\theta_2}\sqrt{1+|a|^2}\,b_\alpha+\beta_2.
$$
Since $|\varphi|=|\psi|$, a straightforward calculation shows that
\begin{equation}\label{6.18}
\varphi=\mu\, \overline b_\alpha + e^{i
\theta}\sqrt{1+|\mu|^2}\,b_\alpha + \zeta \quad\hbox{and}\quad
\psi=e^{i\,(\pi-2\,{\rm arg}\,\mu)}\varphi,
\end{equation}
where $\mu\ne 0,\ |\mu|\ne 1,\ \zeta\in\mathbb C$, and $\theta\in
[0,2\pi)$. \

\bigskip

%
%

\noindent {\bf Case B-2} ($m=n=1$; $(ab)(\alpha)=(\theta_0^{\prime}
\theta_1^{\prime})(\alpha)$):\quad In this case,
$\theta_i^{\prime}(\alpha)\ne 0$  for each $i=0,1$. \
By  a scalar-valued
version of Lemma \ref{lem3.1} and (\ref{6.7-5}), we can see that
$$
\varphi_+=\theta_1 \theta_3\overline{d}\ \ \hbox{and}\ \
\psi_+=\theta_0\theta_2\overline{c}\ \ \hbox{for some inner
functions $\theta_2,\theta_3$,}
$$
where $d\in\mathcal{H}_{z\theta_1 \theta_3}$ and $c\in \mathcal
H_{z\theta_0\theta_2}$. \
Thus  in particular, $c(\alpha)\ne 0$ and $d(\alpha)\ne 0$. \
Let
$$
\theta_2=b_{\alpha}^q \theta_2'\quad\hbox{and}\quad
\theta_3=b_{\alpha}^p \theta_3' \quad\hbox{($\theta_2'(\alpha)\neq
0,\ \theta_3'(\alpha) \neq 0$)}.
$$
To get the left coprime factorization of $\Phi_-$, applying Lemma
\ref{lem4.1}(d) for $\widetilde\Phi_-$ gives
$$
\widetilde\Phi_- =
\begin{bmatrix}
\widetilde b_\alpha & \widetilde\theta_0 \overline{\widetilde a}\\
\widetilde\theta_1 \overline{\widetilde b} & \widetilde b_\alpha
\end{bmatrix}
=\widetilde \Omega_2 \widetilde B^*\quad \hbox{(right coprime
factorization)},
$$
where
$$
\Omega_2:= \nu \begin{bmatrix}
            \theta_0 & -\overline \gamma\theta_1 \\
               {\gamma}\theta_0^\prime & \theta_1^\prime \end{bmatrix}
                \quad  ({\gamma}=-\frac{b(\alpha)}{\theta_0^{\prime}(\alpha)}
                    =-\frac{\theta_1^{\prime}(\alpha)}{a(\alpha)} )
$$
Then we get
\begin{equation}\label{6.8-90}
\Phi_-=
\begin{bmatrix}
b_\alpha &\theta_1 \overline b\\ \theta_0\overline a &b_\alpha \end{bmatrix}
=B^*\Omega_2\quad\hbox{(left coprime factorization)}\,.
\end{equation}
We now claim that
\begin{equation}\label{6.8-88}
p=q.
\end{equation}
We first assume that $p<q$. Then $\theta_2(\alpha)=0$. \
Thus by (\ref{6.7-77}), we have $k_1(\alpha)=k_3(\alpha)=0$.  \
Write
$s:=q-p\ge 1$. \
In this case we can write
$$
\Phi_+
=\begin{bmatrix} 0&\theta_1\theta_3 \overline d\\
\theta_0\theta_2\overline c&0\end{bmatrix}
=(b_{\alpha}^{q+1}\theta_1^\prime\theta_3^\prime\theta_0^\prime\theta_2^\prime) I_2
     \begin{bmatrix} 0&\theta_1^\prime\theta_3^\prime c\\
           b_\alpha^s \theta_0^\prime\theta_2^\prime d &0 \end{bmatrix}^*
\equiv (\theta I_2) A^*\,,
$$
where
$\theta:=b_{\alpha}^{q+1}\theta_1^\prime\theta_3^\prime\theta_0^\prime
\theta_2^\prime$. \
Observe that
\begin{equation}\label{6.8-89}
A\Omega_2^*
=\nu \begin{bmatrix} 0&\theta_1^\prime\theta_3^\prime c\\
           b_\alpha^s \theta_0^\prime\theta_2^\prime d &0 \end{bmatrix}
\begin{bmatrix}
            \theta_0 & -\overline \gamma\theta_1 \\
               {\gamma}\theta_0^\prime & \theta_1^\prime \end{bmatrix}^*
=\nu \begin{bmatrix}
          -{\gamma} \overline{b}_{\alpha}\theta_3^{\prime}c&\theta_3^{\prime}c\\
          b_{\alpha}^{s-1} \theta_2^{\prime}d & \overline\gamma b_{\alpha}^s \theta_2^{\prime}d\end{bmatrix}\,.
\end{equation}
Since $s\ge 1$, we have
$$
H_{A\Omega_2^*} \begin{bmatrix} 1\\0\end{bmatrix}
      =\nu \begin{bmatrix}-{\gamma} H_{\overline b_{\alpha}}(\theta_3^{\prime}c)\\
            0\end{bmatrix}.
$$
Since $(\theta_3^{\prime} c)(\alpha)\ne 0$, it follows from
(\ref{3.15}) that
$$
\begin{bmatrix} \delta_1\\0\end{bmatrix}\in\hbox{cl ran}\, H_{A\Omega_2^*}
\subseteq \hbox{ker}\,(I-T_{\widetilde K}T_{\widetilde K}^*).
$$
Thus we have
$$
\aligned
\begin{bmatrix} \delta_1\\ 0  \end{bmatrix}=T_{\widetilde{K}}T_{\widetilde{K}}^*\begin{bmatrix}
\delta_1\\0\end{bmatrix}
& =\begin{bmatrix}T_{\widetilde{k_1}}& T_{\widetilde{k_3}}\\
  T_{\widetilde{k_2}}&T_{\widetilde{k_4}}\end{bmatrix}
   \begin{bmatrix}T_{k_1(\overline{z})}\delta_1\\T_{k_3(\overline{z})}\delta_1\end{bmatrix}\\
& =\begin{bmatrix}T_{\widetilde{k_1}}& T_{\widetilde{k_3}}\\
   T_{\widetilde{k_2}}&T_{\widetilde{k_4}}\end{bmatrix}
     \begin{bmatrix}k_1(\alpha)\delta_1\\k_3(\alpha)\delta_1\end{bmatrix}\\
& = \begin{bmatrix}0\\0\end{bmatrix}\quad\hbox{(since $k_1(\alpha)=k_3(\alpha)=0$)}\,,
\endaligned
$$
which leads to a contradiction. \
If instead $p<q$ then the same argument leads to a contradiction. \
This proves (\ref{6.8-88}). \

Now since $p=q$, i.e., $s=0$, it follows again from
(\ref{6.8-89}) and (\ref{3.15}) that
\begin{equation}\label{6.7-9-0}
\begin{bmatrix} \delta_1\\ \beta \delta_1 \end{bmatrix}\in\hbox{cl ran}\, H_{A\Omega_2^*}
\subseteq \hbox{ker}\,(I-T_{\widetilde K}T_{\widetilde K}^*)\quad
(\beta \ne 0).
\end{equation}
We thus have
\begin{equation}\label{6.7-9}
\begin{aligned}
\begin{bmatrix} \delta_1\\ \beta \delta_1\end{bmatrix}
 & = \begin{bmatrix} T_{\widetilde k_1} & T_{\widetilde k_3}\\
      T_{\widetilde k_2} & T_{\widetilde k_4}\end{bmatrix}
           \begin{bmatrix} T_{\overline{\widetilde k}_1} & T_{\overline{\widetilde k}_2}\\
              T_{\overline{\widetilde k}_3} & T_{\overline{\widetilde k}_4}\end{bmatrix}
                  \begin{bmatrix} \delta_1\\ \beta \delta_1 \end{bmatrix} \\
 & =\begin{bmatrix} T_{\widetilde k_1} & T_{\widetilde k_3}\\
      T_{\widetilde k_2} & T_{\widetilde k_4}\end{bmatrix}
        \begin{bmatrix} (k_1(\alpha)+\beta k_2(\alpha))\delta_1\\ (k_3(\alpha)+\beta k_4(\alpha))\delta_1\end{bmatrix} \\
 & =\begin{bmatrix}
\Bigl(\widetilde{k}_1\bigl(k_1(\alpha)+\beta k_2(\alpha)\bigr)+\widetilde{k}_3\bigl(k_3(\alpha)+\beta k_4(\alpha)\bigr)\Bigr)\delta_1\\
\Bigl(\widetilde{k}_2\bigl(k_1(\alpha)+\beta
k_2(\alpha)\bigr)+\widetilde{k}_4\bigl(k_3(\alpha)+\beta
k_4(\alpha)\bigr)\Bigr)\delta_1
\end{bmatrix},
\end{aligned}
\end{equation}
which can be written as
\begin{equation}\label{6.7-10-10}
\alpha_1k_1+\alpha_2k_3=1 \quad \hbox{and} \quad
\alpha_1k_2+\alpha_2k_4=\overline{\beta}\,,
\end{equation}
where $\alpha_1=\overline{k_1(\alpha)+\beta k_2(\alpha)}$ and
$\alpha_2=\overline{k_3(\alpha)+\beta k_4(\alpha)}$.\
From (\ref{6.7-9-0}) we
also have
\begin{equation}\label{6.7-8-1}
     \left|\left|\begin{bmatrix} \delta_1\\ \beta\delta_1\end{bmatrix}\right|\right|_2
       = \left|\left|T_{\widetilde{K}}^*\begin{bmatrix} \delta_1\\ \beta\delta_1\end{bmatrix}\right|\right|_2
           =\left|\left| \begin{bmatrix} (k_1(\alpha)+\beta k_2(\alpha))\delta_1\\ (k_3(\alpha)
             +\beta k_4(\alpha))\delta_1\end{bmatrix}\right|\right|_2
           =\left|\left| \begin{bmatrix}  \overline{\alpha_1}\delta_1\\ \overline{\alpha_2}\delta_1 \end{bmatrix}\right|\right|_2\,,
\end{equation}
which implies
\begin{equation}\label{6.7-10-11}
1+|\beta|^2= |\alpha_1|^2+|\alpha_2|^2.
\end{equation}
Recall that
$\varphi_-=b_{\alpha}\theta_0^{\prime}\overline{a}$,
$\psi_-=b_{\alpha}\theta_1^{\prime}\overline{b}$,
$\varphi_+=b_\alpha \theta_1' \theta_3\overline{d}$, and
$\psi_+=b_\alpha \theta_0'\theta_2\overline{c}$. \
Thus from (\ref{6.7-77}), we can see that
\begin{equation}\label{6.7-11-11}
k_1=\theta_2k_1^{\prime}, \
k_2=\theta_1^{\prime}\theta_3k_2^{\prime}, \ k_3=\theta_0^{\prime}
\theta_2k_3^{\prime}, \ k_4=\theta_3k_4^{\prime},
\end{equation}
where $k_i^{\prime} \in H^{\infty}$ for $i=1,\cdots,4$. \
We claim that
\begin{equation}\label{6.7.11-12}
\hbox{$\theta_2$ and $\theta_3$ are both constant:}
\end{equation}
indeed, by (\ref{6.7-10-10}) and (\ref{6.7-11-11}),
$$
\aligned \alpha_1k_1+\alpha_2k_3=1 &\Longrightarrow
\alpha_1\theta_2k_1^{\prime}+\alpha_2\theta_0^{\prime}
\theta_2k_3^{\prime}=1\\
&\Longrightarrow
\theta_2(\alpha_1k_1^{\prime}+\alpha_2\theta_0^{\prime}
k_3^{\prime})=1\\
&\Longrightarrow
\overline{\theta_2}=\alpha_1k_1^{\prime}+\alpha_2\theta_0^{\prime}
k_3^{\prime}\in H^{\infty} \cap \overline{H^{\infty}}=\mathbb C\\
&\Longrightarrow\hbox{$\theta_2$ is constant}
\endaligned
$$
and
$$
\aligned \alpha_1k_2+\alpha_2k_4=\overline{\beta} &\Longrightarrow
\alpha_1\theta_1^{\prime}\theta_3k_2^{\prime}+\alpha_2\theta_3
       k_4^{\prime}=\overline{\beta}\\
&\Longrightarrow
\theta_3(\alpha_1\theta_1^{\prime}k_2^{\prime}+\alpha_2
        k_4^{\prime})=\overline{\beta}\\
&\Longrightarrow
\overline{\theta_3}=\frac{1}{\overline{\beta}}(\alpha_1\theta_1^{\prime}k_2^{\prime}+\alpha_2
k_4^{\prime})\in H^{\infty} \cap \overline{H^{\infty}}=\mathbb C\quad\hbox{(since $\beta\ne 0$)}\\
&\Longrightarrow\hbox{$\theta_3$ is constant\,,}
\endaligned
$$
which proves (\ref{6.7.11-12}). \
Without loss of generality, we may assume that
\begin{equation}\label{6.7-11-7}
\theta_2=\theta_3=1.
\end{equation}
We next claim that
\begin{equation}\label{6.7-11-4}
\theta_0=\theta_1=b_\alpha,\ \ \hbox{i.e.,}\ \
\hbox{$\theta_0^\prime$ and $\theta_1^\prime$ are both constant.}
\end{equation}
If $\varphi$ and $\psi$ are rational functions having the
same number of poles (this hypothesis has not been used until now) then we can see that
\begin{equation}\label{6.7-11-2}
\hbox{if $\theta_0^\prime$ or $\theta_1^\prime$ is constant then
         both $\theta_0^\prime$ and $\theta_1^\prime$ are constant:}
\end{equation}
indeed, since $\varphi_-=\theta_0\overline a$ and $\psi_-=\theta_1\overline b$ are rational functions,
it follows that $\theta_0$ and $\theta_1$ are finite Blaschke products, and hence by assumption,
$\hbox{deg}\, (\theta_0)
= \sharp \,\hbox{(poles of $\overline{\varphi_-}$)}
= \sharp \,\hbox{(poles of $\overline{\psi_-}$)}
=\hbox{deg}\, (\theta_1)$, giving
(\ref{6.7-11-2}).

\smallskip

Toward (\ref{6.7-11-4}), and in view of (\ref{6.7-11-2}), we assume to
the contrary that both $\theta_0'$ and $\theta_1'$ are not constant. \
Since $\theta_0'$ and $\theta_1'$ are non-constant finite Blaschke products,
there exist $v,w\in\mathbb D$ such that
$\theta_0'(v)=0=\theta_1'(w)$. \
But since $k_3=\theta_0'k_3'$ and
$k_2=\theta_1'k_2'$, it follows from (\ref{6.7-10-10}) that
\begin{equation}\label{6.7-11-5}
k_1(v)=\frac{1}{\alpha_1}\quad\hbox{and}\quad
k_4(w)=\frac{\overline\beta}{\alpha_2}
\end{equation}
(where we note that $\alpha_1\ne 0$ and $\alpha_2\ne 0$). \
Observe that $|k_1(v)|=1=|k_4(w)|$:
indeed, if $|k_1(v)|<1$, then $|\alpha_1|>1$, so that by (\ref{6.7-10-11}),
$|\alpha_2|<|\beta|$, which implies $|k_4(w)|>1$,
which contradicts the fact $||K||_\infty\le 1$ and if instead
$|k_4(w)|<1$, then similarly we get a contradiction. \
Since $||k_1||_\infty\le 1$ and
$||k_4||_\infty\le 1$, it follows from the Maximum Modulus Theorem
that $k_1$ and $k_4$ are both constant, i.e., \
\begin{equation}\label{6.7-11-6}
k_1=\frac{1}{\alpha_1}\quad\hbox{and}\quad
k_4=\frac{\overline\beta}{\alpha_2}.
\end{equation}
Then from (\ref{6.7-10-10}), we should have $k_2=k_3\equiv 0$, which leads to a contradiction, using
(\ref{6.7-77}). \


\bigskip

In view of (\ref{6.8-90}) and (\ref{6.7-11-4}), we can now write
$$
\Phi_-=\begin{bmatrix} b_\alpha & b_\alpha \overline b\\ b_\alpha \overline a&
b_\alpha\end{bmatrix}=B^*\Omega_2
      \quad\hbox{(left coprime factorization),}
$$
where
$$
\Omega_2:=\nu\begin{bmatrix} b_\alpha &-\overline{\gamma}b_\alpha\\
\gamma&1\end{bmatrix}.
$$
Also, in view of (\ref{6.7-11-7}) and (\ref{6.7-11-4}), we can write
$$
\begin{aligned}
\Phi_+ & = \begin{bmatrix} 0& b_\alpha\overline d\\ b_\alpha \overline
c&0\end{bmatrix}
   =\begin{bmatrix} 0&c\\ d&0\end{bmatrix}^*\begin{bmatrix} b_\alpha&0\\ 0& b_\alpha\end{bmatrix}\\
    &= A^*\Omega_0\Omega_2 =A^* \left( \nu \begin{bmatrix}1&\overline{\gamma}b_\alpha\\ -{\gamma}&b_\alpha\end{bmatrix}\right)
              \left(\nu \begin{bmatrix} b_\alpha&-\overline{\gamma}b_\alpha\\ \gamma &1\end{bmatrix}\right)\,,
\end{aligned}
$$
where $\Omega_0:=\nu \begin{bmatrix} 1&\overline{\gamma}b_\alpha\\
-{\gamma}&b_\alpha\end{bmatrix}$ and $\Omega_0\Omega_2= b_\alpha I_2$. \
Then by
(\ref{3.9}),
\begin{equation}\label{6.7-17}
\Omega_0 H^2_{\mathbb C^2} \subseteq \hbox{ker}\, [T_\Phi^*,
T_\Phi]\,,\quad\hbox{so that}\quad
\hbox{ran}\,[T_\Phi^*, T_\Phi]\subseteq
\mathcal{H}_{\Omega_0}\,.
\end{equation}
Since $\hbox{dim}\, \mathcal{H}_{\Omega_0}=1$, it follows
that $\hbox{ran}\,[T_\Phi^*, T_\Phi]=\mathcal{H}_{\Omega_0}$ or
$\hbox{ran}\,[T_\Phi^*, T_\Phi]=\{0\}$, i.e., $T_\Phi$ is normal. \
If
$T_\Phi$ is normal then the same argument as (\ref{6.18}) shows that
$$
\varphi=\mu\, \overline b_\alpha + e^{i
\theta}\sqrt{1+|\mu|^2}\,b_\alpha+\zeta\quad\hbox{and}\quad
\psi=e^{i\,(\pi-2\,{\rm arg}\,\mu)}\varphi,
$$
where $|\mu|=1,\ \zeta\in\mathbb C,$ and $\theta\in
[0,2\pi)$.

\medskip

Suppose $\hbox{ran}\,[T_\Phi^*, T_\Phi]=\mathcal{H}_{\Omega_0}$. \
We
now recall a well-known result of B. Morrel (\cite{Mor};
\cite[p.162]{Con}). If $T\in\mathcal{B(H)}$ satisfies the following
properties: (i) $T$ is hyponormal; (ii) $[T^*,T]$ is rank-one;
and (iii) $\hbox{ker}\,[T^*,T]$ is invariant for $T$, then $T-\beta$
is quasinormal for some $\beta\in\mathbb C$, i.e., $T-\beta$
commutes with $(T-\beta)^*(T-\beta)$. \
Since $T_\Phi$ satisfies the
above three properties, we can conclude that $T_{\Phi-\beta}$ is
quasinormal for some $\beta\in\mathbb C$. \
Thus,
$T_{\Phi-\beta}^*[T_{\Phi-\beta}^*,\, T_{\Phi-\beta}]=0$. \
But since
$[T_{\Phi-\beta}^*, T_{\Phi-\beta}]=[T_{\Phi}^*, T_{\Phi}]$, it
follows that
\begin{equation}\label{6.7-17-17}
T_{(\Phi^*-\overline{\beta})}[T_{\Phi}^*, T_{\Phi}]=0.
\end{equation}
Observe that
$$
\Omega_0=\nu \begin{bmatrix}1&\overline\gamma b_\alpha \\ -\gamma
&b_\alpha\end{bmatrix}=
  \nu \begin{bmatrix}1&\overline \gamma\\ -\gamma &1\end{bmatrix} \cdot
     \begin{bmatrix}1&0\\ 0&b_\alpha\end{bmatrix} \,,
$$
so that
$$
\begin{bmatrix} \overline\gamma\delta\\ \delta \end{bmatrix}= \begin{bmatrix}1&\overline \gamma\\ -\gamma &1\end{bmatrix}
\,\begin{bmatrix} 0\\ \delta \end{bmatrix} \in \mathcal H_{\Omega_0}
\quad\hbox{($\delta:=\frac{\sqrt{1-|\alpha|^2}}{1-\overline\alpha z}$)}\,,
$$
which implies that $\begin{bmatrix} \overline\gamma\delta \\ \delta\end{bmatrix}\in
\hbox{ran}\,[T_\Phi^*, T_\Phi]$.

On the other hand, since $a,b,c,d\in\mathcal{H}_{zb_\alpha}$,
if we choose $\{1, b_\alpha\}$ as a (not necessarily orthogonal) basis for
$\mathcal{H}_{zb_\alpha}$, then $\overline a {b_\alpha}$ and $\overline b {b_\alpha}$ are
of the form $\xi_1 {b_\alpha}+\xi_2$ ($\xi_1,\xi_2\in\mathbb C$) and
$c {\overline b_\alpha}$ and $d {\overline b_\alpha}$ are of the form
$\eta_1 {\overline b_\alpha}+\eta_2$ ($\eta_1,\eta_2\in\mathbb C$). \
Thus we may assume that $T_\Phi^*$ is of the form
$$
T_\Phi^*=
\begin{bmatrix} T_{b_\alpha} & \overline b T_{b_\alpha}+  cT_{\overline b_\alpha}+c_0\\
\overline a T_{b_\alpha}+ d T_{\overline b_\alpha}+d_0 & T_{b_\alpha}\end{bmatrix}\quad
(a,b,c,d,c_0,d_0\in\mathbb C)\,.
$$
Then by (\ref{6.7-17-17}) we have
$$
\begin{bmatrix} T_{b_\alpha} & \overline b T_{b_\alpha}+  cT_{\overline b_\alpha}+c_0\\
\overline a T_{b_\alpha}+ d T_{\overline b_\alpha}+d_0 & T_{b_\alpha}\end{bmatrix}
\begin{bmatrix} \overline\gamma\delta \\ \delta\end{bmatrix}
   =\begin{bmatrix}\overline\gamma \overline\beta\delta\\
           \overline{\beta}\delta \end{bmatrix}\,.
$$
Now recall the case assumption, which gives $\gamma=-b=-\frac{1}{a}$; it
follows that
\begin{equation}\label{6.7.9}
\begin{bmatrix}\overline\gamma \overline\beta \\ \overline{\beta} \end{bmatrix}
  = \begin{bmatrix} (\overline\gamma +\overline b) b_\alpha +
             c_0\\ (1+\overline a \overline\gamma) b_\alpha +\overline\gamma d_0\end{bmatrix}
        = \begin{bmatrix} c_0\\ \overline\gamma d_0 \end{bmatrix}.
\end{equation}
which implies that
$$
d_0=\frac{\overline\beta}{\overline\gamma}=\frac{1}{\overline\gamma^2}
c_0 =\overline a^2 c_0.
$$
On the other hand, a straightforward calculation shows that
$$
[T_\Phi^*, T_\Phi]=\begin{bmatrix} A&*\\ *&*\end{bmatrix},
$$
where
\begin{equation}\label{6.7.6}
\begin{aligned}
A & := \Bigl((|c|^2+|c_0|^2) -(1+|a|^2+|d_0|^2)\Bigr)
        + \Bigl(\overline{bc_0}+c_0\overline c - \overline d d_0 -\overline{d_0}\overline a\Bigr)T_{b_\alpha}\\
             & +\Bigl(bc_0 +c\overline{c_0}- d\overline{d_0}-d_0 a \Bigr) T_{\overline b_\alpha}
                 + \Bigl( \overline{bc} - \overline{ad} \Bigr) T_{b_\alpha^2}
                 + \Bigl( bc-ad \Bigr) T_{\overline b_\alpha^2}
                    + (1+|b|^2-|d|^2) T_{b_\alpha} T_{\overline b_\alpha}.
\end{aligned}
\end{equation}
But since $\hbox{rank}\, A\le 1$, we have
\begin{equation}\label{6.7.5}
\begin{cases}
     bc=ad\\
      bc_0 +c\overline{c_0}- d\overline{d_0}-d_0 a=0.
\end{cases}
\end{equation}
Since $ab=1$ (and hence, $c=a^2d$) and $\overline{d_0}=a^2
\overline{c_0}$, we have $c\overline{c_0}-
d\overline{d_0}=a^2d\overline{c_0}-d a^2 \overline{c_0}=0$. \
Thus by
(\ref{6.7.5}),
\begin{equation}\label{6.7.8}
bc_0=ad_0=a\overline a^2c_0.
\end{equation}
If $c_0\ne 0$, then by (\ref{6.7.8}), $\frac{1}{a}=a\overline a^2$,
i.e., $|a|=1$, and in turn $|c_0|=|d_0|$. \
Also since $ab=1$, we can
write
$$
a=e^{i\theta}\quad\hbox{and}\quad b=e^{-i\theta}\quad\hbox{for some
$\theta\in [0,2\pi)$}.
$$
Since $|c_0|=|d_0|$ and by (\ref{6.7.6}),
$$
|c|^2+|c_0|^2-(1+|a|^2+|d_0|^2)+(1+|b|^2-|d|^2)=0,
$$
it follows that $|c|=|d|$. \
Moreover, a straightforward calculation
shows that
$$
[T_\Phi^*, T_\Phi]=\begin{bmatrix} |d|^2-2 & -2 e^{i\theta}\\ -2
e^{-i\theta}& |d|^2-2
\end{bmatrix}\, K_0\quad (\hbox{where $K_0:=1-T_{b_\alpha}T_{\overline b_\alpha}$}).
$$
Since $\hbox{rank}\,[T_\Phi^*, T_\Phi]=1$, it follows that
$(|d|^2-2)^2-4=0$, i.e., $|d|=2$. \
Thus we can write
$$
a=e^{i\theta},\ \ b=e^{-2i\theta} a,\ \ \overline d=2 e^{i\omega},\
\ \overline c=e^{-2i\theta} \overline d\quad\hbox{(since
$bc=ad$)}\,.
$$
Also since $\overline{c_0}=\overline
a^2\overline{d_0}=e^{-2i\theta}\overline{d_0}$, it follows that
\begin{equation}\label{6.7.7}
\begin{cases}
\varphi=a\overline b_\alpha +\overline d b_\alpha +\overline{d_0}= e^{i\theta}\overline b_\alpha +2 e^{i\omega} b_\alpha +\overline{d_0}\\
\psi=b\overline b_\alpha + \overline c b_\alpha +\overline{c_0}=
e^{-2i\theta}\varphi.
\end{cases}
\end{equation}
In particular, since
$\beta=\gamma\overline{d_0}=-\frac{1}{a}\overline{d_0}=-e^{-i\theta}\overline{d_0}$,
it follows that $T_\Phi-\beta=T_\Phi+e^{-i\theta}\overline{d_0}$ is
quasinormal. \

If $c_0=0$ then by (\ref{6.7.9}), $\beta=0$, and hence $d_0=0$. In
particular, $T_\Phi$ is quasinormal. \
A straightforward calculation
shows that
$$
[T_\Phi^*, T_\Phi]=\begin{bmatrix}
A & -(a+\overline b)K_0\\
-(b+\overline a)K_0 & B
\end{bmatrix}\,,
$$
where
$$
\begin{cases}
A:= |c|^2-1-|a|^2+(bc-ad) T_{\overline b_\alpha^2}+\overline{(bc-ad)}\, T_{b_\alpha^2}+(1+|b|^2-|d|^2) T_{b_\alpha}T_{\overline b_\alpha} \\
B:= |d|^2-1-|b|^2+(ad-bc) T_{\overline b_\alpha^2}+\overline{(ad-bc)}\, T_{b_\alpha^2}+(1+|a|^2-|c|^2) T_{b_\alpha}T_{\overline b_\alpha}\\
K_0:=1-T_{b_\alpha}T_{\overline b_\alpha}.
\end{cases}
$$
Since $\hbox{rank}\,[T_\Phi^*, T_\Phi]=1$, we have
$$
\begin{cases}
bc=ad\\
|c|^2-1-|a|^2=|d|^2-|b|^2-1.
\end{cases}
$$
We thus have
$$
[T_\Phi^*, T_\Phi]=\begin{bmatrix}
|c|^2-1-|a|^2 & -(a+\overline b)\\
-(b+\overline a) & |c|^2-1-|a|^2
\end{bmatrix}\, K_0\quad (\hbox{where}\ K_0:=1-T_{b_\alpha}T_{\overline b_\alpha}).
$$
Thus
$$
\begin{aligned}
0=T_\Phi^*[T_\Phi^*, T_\Phi]
&= \begin{bmatrix} T_{b_\alpha} & \overline b T_{b_\alpha}+  cT_{\overline b_\alpha}\\
     \overline a T_{b_\alpha}+ d T_{\overline b_\alpha}& T_{b_\alpha}\end{bmatrix}
        \begin{bmatrix} |c|^2-1-|a|^2 & -(a+\overline b)\\ -(b+\overline a) & |c|^2-1-|a|^2 \end{bmatrix}\, K_0\\
&= \begin{bmatrix} \Bigl((|c|^2-1-|a|^2)-\overline b(b+\overline a)\Bigr) T_{b_\alpha}K_0 &\ast\\
               \ast & \Bigl(-\overline a(a+\overline b)+ (|c|^2-1-|a|^2)\Bigr) T_{b_\alpha}K_0
     \end{bmatrix}\,,
\end{aligned}
$$
which implies
$$
\begin{cases}
|c|^2-1-|a|^2-|b|^2-1=0\\
|c|^2-1-|a|^2-|a|^2-1=0,
\end{cases}
$$
giving $|a|=|b|=1$ and in turn $|c|=|d|=2$. \
As in  (\ref{6.7.7}), we may thus write
$$
\begin{cases}
\varphi= e^{i\theta}\overline b_\alpha +2 e^{i\omega} b_\alpha\\
\psi=e^{-2i\theta}\varphi.
\end{cases}
$$

This completes the proof.
\end{proof}

\bigskip

We can say more about the solution of the case (\ref{6.6-0}).

\medskip

\begin{cor}
Using the terminology in case (\ref{6.6-0}), assume that
either $\varphi$ or $\psi$ is a rational function having at least two poles. \
Then both of $\varphi$ and $\psi$ are rational. \
Moreover, in this case, either $\varphi$ or $\psi$ has exactly one pole, say $\alpha$.
\end{cor}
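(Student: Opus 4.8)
The plan is to read off the structure already extracted in Case B-2 of the proof of Theorem \ref{thm4.2} (which is exactly case (\ref{6.6-0})), convert rationality of $\varphi$ and of $\psi$ into the statement that certain inner functions are finite Blaschke products, and then re-run the contradiction behind (\ref{6.7-11-4}), observing that it in fact delivers only the weaker -- but here sufficient -- conclusion that at least one of the two relevant Blaschke factors is constant.

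First I would record the factorizations from Case B-2. After $\theta_2=\theta_3=1$ was established there (see (\ref{6.7-11-7})), one has the coprime factorizations
$$
\varphi_-=b_\alpha\theta_0'\overline a,\quad \psi_-=b_\alpha\theta_1'\overline b,\quad
\varphi_+=b_\alpha\theta_1'\overline d,\quad \psi_+=b_\alpha\theta_0'\overline c,
$$
so that, writing $\theta_0:=b_\alpha\theta_0'$ and $\theta_1:=b_\alpha\theta_1'$, the computations of the Introduction give $\hbox{ker}\,H_{\overline{\varphi_-}}=\theta_0H^2$ and $\hbox{ker}\,H_{\overline{\varphi_+}}=\theta_1H^2$, and symmetrically $\hbox{ker}\,H_{\overline{\psi_-}}=\theta_1H^2$, $\hbox{ker}\,H_{\overline{\psi_+}}=\theta_0H^2$. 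By Kronecker's Lemma in the form (\ref{1.4}), each analytic or co-analytic summand is rational exactly when the associated inner function is a finite Blaschke product. Hence $\varphi$ is rational $\Longleftrightarrow$ both $\theta_0$ and $\theta_1$ are finite Blaschke products $\Longleftrightarrow$ $\psi$ is rational. This already proves the first assertion: if one of $\varphi,\psi$ is rational, so is the other (the ``at least two poles'' hypothesis is not needed here).

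Assuming now that both are rational, $\theta_0'$ and $\theta_1'$ are finite Blaschke products, and I would invoke the argument of (\ref{6.7-11-4})--(\ref{6.7-11-6}) essentially verbatim: were both $\theta_0'$ and $\theta_1'$ nonconstant, one could choose $v,w\in\mathbb D$ with $\theta_0'(v)=\theta_1'(w)=0$ and, using $k_3=\theta_0'k_3'$, $k_2=\theta_1'k_2'$ (from (\ref{6.7-11-11})) together with (\ref{6.7-10-10})--(\ref{6.7-10-11}), deduce $k_1(v)=1/\alpha_1$ and $k_4(w)=\overline\beta/\alpha_2$ with $|k_1(v)|=|k_4(w)|=1$; the Maximum Modulus Theorem then forces $k_1,k_4$ constant and hence $k_2=k_3\equiv0$, contradicting (\ref{6.7-77}). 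The point I must check carefully is that this argument uses only that $\theta_0'$ and $\theta_1'$ are finite Blaschke products and nowhere the ``same number of poles'' hypothesis (which entered the theorem solely through (\ref{6.7-11-2}), to upgrade ``at least one constant'' to ``both constant''). Consequently at least one of $\theta_0',\theta_1'$, say $\theta_0'$, is constant.

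It remains to translate back to poles. Since the number of poles of $\varphi$ equals $\deg\theta_0=1+\deg\theta_0'$ and that of $\psi$ equals $\deg\theta_1=1+\deg\theta_1'$ (cf. the pole count in (\ref{6.7-11-2})), the previous step gives $\min\{\deg\theta_0,\deg\theta_1\}=1$; that is, one of $\varphi,\psi$ has a single pole, located at the zero $\alpha$ of $b_\alpha$. The standing hypothesis that one of them has at least two poles forces that function to be the one whose Blaschke factor is nonconstant, leaving the other with exactly one pole $\alpha$, as claimed. The only genuine obstacle is bookkeeping: confirming that the Case B-2 reductions ($\theta_2=\theta_3=1$ and the relations (\ref{6.7-10-10})--(\ref{6.7-11-11})) remain in force here and that finiteness of the Blaschke factors alone suffices to drive the (\ref{6.7-11-4}) argument.
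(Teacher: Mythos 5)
Your proof is correct, and the two halves compare differently with the paper's own argument. For the second assertion you do exactly what the paper does: re-run the argument of (\ref{6.7-11-5})--(\ref{6.7-11-6}), which needs only that $\theta_0'$ and $\theta_1'$ are finite Blaschke products and both nonconstant in order to produce zeros $v,w\in\mathbb D$, force $k_1,k_4$ constant by the Maximum Modulus Theorem, and hence $k_2=k_3\equiv 0$, contradicting (\ref{6.7-77}); the ``same number of poles'' hypothesis of Theorem \ref{thm4.2} is indeed not used there. For the first assertion, however, your route is genuinely different. The paper extracts from the hypothesis only that one of $\theta_0',\theta_1'$ is a nonconstant finite Blaschke product, and must then exclude the possibility that the other is a nonconstant \emph{singular} inner function; it does so by a boundary argument (a nontangential limit $0$ of $\theta_1'$ at some $e^{i\delta}$ forces $|k_4|$ to approach $|\overline\beta/\alpha_2|>1$ there, contradicting $\|K\|_\infty\le 1$). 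You instead exploit the ``crossed'' coprime factorizations $\varphi_+=\theta_1\theta_3\overline d$, $\psi_+=\theta_0\theta_2\overline c$ from (\ref{6.8-1}) together with $\theta_2=\theta_3=1$ from (\ref{6.7-11-7}): rationality of $\varphi$ passes to $\varphi_+$ and $\overline{\varphi_-}$ under the Riesz projection, so Kronecker's Lemma in the form (\ref{1.4}) makes \emph{both} $\theta_0=b_\alpha\theta_0'$ and $\theta_1=b_\alpha\theta_1'$ finite Blaschke products, and these are precisely the inner functions governing $\psi_-$ and $\psi_+$; the singular-inner case never arises. This is shorter, and it correctly shows that the first assertion needs only ``$\varphi$ or $\psi$ rational,'' not the pole count. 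What the paper's longer argument buys is robustness: it survives under the weaker hypothesis that only the co-analytic part $\overline{\varphi_-}$ is known to be rational with at least two poles, where nothing is known a priori about the inner factor of $\varphi_+$. Your bookkeeping check is the right one, and it goes through: the relations (\ref{6.7-77}), (\ref{6.7-10-10})--(\ref{6.7-11-11}) and the normalization (\ref{6.7-11-7}) are all established in Case B-2 before any rationality hypothesis enters, so they remain in force for the Corollary.
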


\begin{proof}
Suppose either $\varphi$ or $\psi$ is a rational function having at least two poles. \
Thus either $\theta_0'$ or $\theta_1'$ is a nonconstant finite Blaschke product. \
Without loss of generality we assume that  $\theta_0'$ is a nonconstant finite Blaschke product. \
If $\theta_1'$ has a nonconstant Blaschke factor then the same argument as in (\ref{6.7-11-5})
leads to a contradiction. \
Therefore, for the first assertion, we assume to the contrary that
$\theta_1'$ is a nonconstant singular inner function. \
Since $\theta_0'$ is a nonconstant finite Blaschke product,
$$
\exists \ w\in\mathbb D\ \hbox{such that}\ \theta_0'(w)=0,\ \hbox{so that by (\ref{6.7-11-11})},\
k_3(w)=0.
$$
Thus by (\ref{6.7-10-10}), $k_1(w)=\frac{1}{\alpha_1}$. \
But since $|k_1(w)|<1$ (if it were not so, then $k_1$ would be constant,
so that $k_3\equiv 0$, a contradiction from (\ref{6.7-77})),
it follows that $1<|\alpha_1|$. \
Thus by (\ref{6.7-10-11}),
\begin{equation}\label{6.8-0}
|\alpha_2|<|\beta|.
\end{equation}
On the other hand, since $\theta_1'$ is a nonconstant singular inner function,
we can see that there exists
$\delta\in [0,2\pi)$ such that $\theta_1'$ has nontangential limit $0$ at $e^{i\delta}$
(cf. \cite[Theorem II.6.2]{Ga}). \
Thus  by (\ref{6.7-11-11}), $k_2$ has nontangential limit $0$ at $e^{i\delta}$ and in
turn, by
 (\ref{6.7-10-10}),  $k_4$ has nontangential limit $\frac{\overline\beta}{\alpha_2}$ at $e^{i\delta}$. \
But since $||k_4||_\infty\le 1$, it follows that
$\left|\frac{\overline\beta}{\alpha_2}\right|\le 1$, i.e., $|\beta|\le |\alpha_2|$,
which contradicts (\ref{6.8-0}). \
This proves the first assertion. \
The second assertion follows at once from the same argument as in (\ref{6.7-11-5}):
in other words, either $\theta_0'$ or $\theta_1'$ is constant, i.e.,
$\theta_0=b_\alpha$ or $\theta_1=b_\alpha$. \
This complete the proof.
\end{proof}


\begin{rem} \ Due to a technical problem, we omitted a detailed
proof for the case B-2 from the proof of \cite[Theorem 5.1]{CHL1}. \
The proof of the case B-2 (with $\alpha=0$) in the proof of Theorem \ref{thm4.2}
provides the portion of the
proof that did not appear in \cite{CHL1}. \
In particular, Theorem
\ref{thm4.2} incorporates  an extension of a corrected version of \cite[Theorem 5.1]{CHL1},
in which the exceptional case (\ref{6.6-0}) was
omitted. \
\end{rem}

%
%


\end{document}